\theoremstyle{plain}
\def\bdi{\begin{diagram}}
\def\edi{\end{diagram}}
\theoremstyle{plain}
\newtheorem{thm}{Theorem}[section]
\newtheorem{cor}[thm]{Corollary}
\newtheorem{lem}[thm]{Lemma}
\newtheorem{prop}[thm]{Proposition}
\theoremstyle{definition}
\newtheorem{defi}[thm]{Definition}
\newtheorem{defis}[thm]{Definitions}
\newtheorem{conj}[thm]{Conjecture}
\newtheorem{conv}[thm]{Convention}
\newtheorem{nota}[thm]{Notation}
\newtheorem{rem}[thm]{Remark}
\newtheorem{rems}[thm]{Remarks}
\newtheorem{exa}[thm]{Example}
\newtheorem{exas}[thm]{Examples}
\newtheorem{prob}[thm]{Problem}
\newtheorem{probs}[thm]{Problems}
\newtheorem{ques}[thm]{Question}
\newtheorem{sit}[thm]{}
\newcommand{\Spec}{ \operatorname{{\rm Spec}}}
\newcommand{\Frac}{ \operatorname{{\rm Frac}}}
\newcommand{\Sing}{ \operatorname{{\rm Sing}}}
\newcommand{\RF}{ \operatorname{{\rm RF}}}
\newcommand{\Div}{ \operatorname{{\rm Div}}}
\newcommand{\CDiv}{ \operatorname{{\rm CDiv}}}
\newcommand{\Aut}{ \operatorname{{\rm Aut}}}
\newcommand{\LND}{ \operatorname{{\rm LND}}}
\newcommand{\SL}{ \operatorname{{\bf SL}}}
\newcommand{\ML}{ \operatorname{{\rm ML}}}
\newcommand{\Der}{ \operatorname{{\rm Der}}}
\def\deg{\mathop{\rm deg}}
\def\ord{\mathop{\rm ord}}
\def\tp{\mathop{\rm tp}}
\def\ML{\mathop{\rm ML}}
\def\Pic{\mathop{\rm Pic}}
\def\card{\mathop{\rm card}}
\renewcommand{\epsilon}{\varepsilon}
\def\and{\quad\mbox{and}\quad}
\newcommand{\Cl}{ \operatorname{\rm Cl}}
\renewcommand{\div}{ \operatorname{\rm div}}
\newcommand{\C}{\ensuremath{\mathbb{C}}}
\newcommand{\Q}{\ensuremath{\mathbb{Q}}}
\newcommand{\Z}{\ensuremath{\mathbb{Z}}}
\newcommand{\N}{\ensuremath{\mathbb{N}}}
\newcommand{\G}{\ensuremath{\mathbb{G}}}
\def\fF{{\mathfrak F}}
\def\fG{{\mathfrak G}}
\def\fH{{\mathfrak H}}
\def\fR{{\mathfrak R}}
\newcommand{\cB}{{\ensuremath{\mathcal{B}}}}
\newcommand{\cF}{{\ensuremath{\mathcal{F}}}}
\newcommand{\cS}{{\ensuremath{\mathcal{S}}}}
\newcommand{\cE}{{\ensuremath{\mathcal{E}}}}
\newcommand{\cO}{{\ensuremath{\mathcal{O}}}}
\newcommand{\cC}{{\ensuremath{\mathcal{C}}}}
\newcommand{\cD}{{\ensuremath{\mathcal{D}}}}
\newcommand{\cH}{{\ensuremath{\mathcal{H}}}}
\newcommand{\cN}{{\ensuremath{\mathcal{N}}}}
\newcommand{\cX}{{\ensuremath{\mathcal{X}}}}
\newcommand{\cY}{{\ensuremath{\mathcal{Y}}}}
\newcommand{\p}{\partial}
\newcommand{\id}{{\rm id}}
\newcommand{\supp}{{\rm supp\,}}
\newcommand{\h}{{\rm ht}}
\renewcommand{\rho}{\varrho}
\def\bals#1\eals{\begin{align*}#1\end{align*}}
\def\bal#1\eal{\begin{align}#1\end{align}}
\def\SAut{\mathop{\rm SAut}}
\def\A{{\mathbb A}}
\def\NN{{\mathbb N}}
\def\ZZ{{\mathbb Z}}
\def\PP{{\mathbb P}}
\def\V{{\mathbb V}}
\def\fG{{\mathfrak G}}
\renewcommand{\phi}{\varphi}
\newcommand{\bnum}{\begin{enumerate}}
\newcommand{\enum}{\end{enumerate}}
\renewcommand{\emptyset}{\varnothing}
\newcommand{\la}{\label}
\newcommand{\brem}{\begin{rem}}
\newcommand{\brems}{\begin{rems}}
\newcommand{\erem}{\end{rem}}
\newcommand{\erems}{\end{rems}}
\newcommand{\bprob}{\begin{prob}}
\newcommand{\eprob}{\end{prob}}
\newcommand{\bprobs}{\begin{probs}}
\newcommand{\eprobs}{\end{probs}}
\newcommand{\bques}{\begin{ques}}
\newcommand{\eques}{\end{ques}}
\newcommand{\bexa}{\begin{exa}}
\newcommand{\bexas}{\begin{exas}}
\newcommand{\eexa}{\end{exa}}
\newcommand{\eexas}{\end{exas}}
\newcommand{\bdefi}{\begin{defi}}
\newcommand{\edefi}{\end{defi}}
\newcommand{\bdefis}{\begin{defis}}
\newcommand{\edefis}{\end{defis}}
\newcommand{\bcor}{\begin{cor}}
\newcommand{\ecor}{\end{cor}}
\newcommand{\blem}{\begin{lem}}
\newcommand{\elem}{\end{lem}}
\newcommand{\bconv}{\begin{conv}}
\newcommand{\econv}{\end{conv}}
\newcommand{\bconj}{\begin{conj}}
\newcommand{\econj}{\end{conj}}
\newcommand{\bprop}{\begin{prop}}
\newcommand{\eprop}{\end{prop}}
\newcommand{\bthm}{\begin{thm}}
\newcommand{\ethm}{\end{thm}}
\newcommand{\bnota}{\begin{nota}}
\newcommand{\enota}{\end{nota}}
\newcommand{\bsit}{\begin{sit}}
\newcommand{\esit}{\end{sit}}
\newcommand{\be}{\begin{equation}}
\newcommand{\ee}{\end{equation}}
\newcommand{\bproof}{\begin{proof}}
\newcommand{\eproof}{\end{proof}}
\def\ba{\begin{array}}
\def\ea{\end{array}}
\newcommand{\nlin}{\unitlength1mm\begin{picture}(0,9.25)
                      \put(0,0.75){\line(0,1){8.5}}
                     \end{picture}}
\newcommand{\vlin}[1]{\hspace{0.75mm}\unitlength1mm\begin{picture}
(#1,0)
                      \put(0,0){\line(1,0){#1}}
                     \end{picture}\hspace{0.75mm}\rule[-3mm]{0mm}
                     {4mm}}
\def\llin{\vlin{11.5}}
\newcommand{\lin}{\vlin{8.5}}
\newcommand{\co}[1]{\unitlength1mm\begin{picture}(0,8)
   \put(0,0){\circle{1.5}}
   \put(0,3){\makebox(0,5)[b]{$#1$}}
                     \end{picture}}
\newcommand{\mybox}{\unitlength1mm\begin{picture}(0,1.5)
   \put(-0.75,-0.75){\line(0,1){1.5}}
   \put(-0.75,-0.75){\line(1,0){1.5}}
   \put(0.75,0.75){\line(0,-1){1.5}}
   \put(0.75,0.75){\line(-1,0){1.5}}
   \end{picture}}
\newcommand{\xbox}{\unitlength1mm\begin{picture}(0,1.5)
   \put(0,0){$\mybox$}
   \put(-0.75,0){\line(1,0){1.5}}
   \put(0,-0.75){\line(0,1){1.5}}
   \end{picture}}
\newcommand{\cou}[2]{\unitlength1mm\begin{picture}(0,8)
   \put(0,0){\circle{1.5}}
   \put(0,3){\makebox(0,5)[b]{$#1$}}
   \put(0,-7){\makebox(0,4)[t]{$#2$}}
     \end{picture}
     \rule[-7mm]{0mm}{7mm}}
\newcommand{\xbrl}[2]{\unitlength1mm\begin{picture}(0,8)
   \put(0,0){\xbox}
   \put(-5,0){\makebox(0,5)[b]{$#1$}}
  \put(5,0){\makebox(0,5)[b]{$#2$}}
     \end{picture}
     \rule[-7mm]{0mm}{7mm}}
\newcommand{\xbshiftup}[2]{\unitlength1mm\begin{picture}(0,9.25)
                      \put(0,10){\xbrl{#1}{#2}}
                     \end{picture}}
\newcommand{\xbshiftright}[2]{\unitlength1mm\begin{picture}(0,8)
   \put(0,0){\xbrl{#1}{#2}}
                     \end{picture}}
\thanks{
This work
started during a stay of the last two authors at the Max Planck Institut
f\"ur Mathematik (MPIM) at Bonn in October of 2014, continued during the stay of the third author at the MPIM in March-June of 2015 and a short visit of the first author  at the MPIM in March of 2015. The authors thank this
institution for its hospitality, support, and excellent working conditions.}
\begin{document}
\title[Cancellation for surfaces. I]{Cancellation for surfaces revisited. I}

\author{
H.\ Flenner, 
S.\ Kaliman, and
M.\ Zaidenberg}
\address{
Fakult\"at f\"ur
Mathematik, Ruhr Universit\"at Bochum, Geb.\ NA 2/72,
Universit\"ats\-str.\ 150, 44780 Bochum, Germany}
\email{Hubert.Flenner@rub.de}
\address{Department of Mathematics,
University of Miami, Coral Gables, FL 33124, USA}
\email{kaliman@math.miami.edu}
\address{Universit\'e Grenoble Alpes,
Institut Fourier,
CS 40700,
38058 Grenoble cedex 09, France}
\email{Mikhail.Zaidenberg@ujf-grenoble.fr}

\begin{abstract} The celebrated Zariski Cancellation Problem asks as to when the existence of an isomorphism $X\times\A^n\cong X'\times\A^n$ for (affine) algebraic varieties  $X$ and $X'$ implies that $X\cong X'$. In this  paper we provide a criterion for cancellation by the affine line (that is,  $n=1$) in the case where $X$ is a normal affine surface admitting an $\A^1$-fibration $X\to B$ 
over a smooth affine curve $B$. If $X$ does not admit such an $\A^1$-fibration then the cancellation by the affine line is known to hold for $X$ by a result of Bandman and Makar-Limanov (\cite{BML3}). It occurs that for a smooth $\A^1$-fibered affine surface $X$ over $B$ the cancellation by an affine line holds if and only if $X\to B$ is a line bundle, and, for a normal such $X$, if and only if $X\to B$ is a cyclic quotient of a line bundle (an orbifold line bundle). When the cancellation does not hold for $X$ we include $X$ in a non-isotrivial deformation family $X_\lambda\to B$, $\lambda\in\Lambda$, 
of $\A^1$-fibered surfaces with cylinders $X_\lambda\times\A^1$ isomorphic  over $B$. This gives large families of examples of non-cancellation for surfaces which extend the known examples constructed by Danielewski (\cite{Da}), tom Dieck (\cite{tD}), Wilkens (\cite{Wi}), Masuda and   Miyanishi (\cite{MM1}), e.a.  
\end{abstract}
\date{}
\maketitle

\thanks{
{\renewcommand{\thefootnote}{} \footnotetext{ 2010
\textit{Mathematics Subject Classification:}
14R20,\,32M17.\mbox{\hspace{11pt}}\\{\it Key words}: cancellation, affine
surface, group action, one-parameter subgroup, transitivity.}}

{\footnotesize \tableofcontents}

\section*{Introduction}
This is the first in a series of papers addressed to as Part I and Part II. We introduce here into the results of the both parts. 

Let $X$ and $Y$ 
be algebraic varieties  over 
a field $\mathbb{k}$. 
The celebrated Zariski Cancellation Problem, in its most general form, asks
under which circumstances 
the existence of a biregular (resp., birational) isomorphism 
$X\times\A^n\cong Y\times\A^n$ implies that $X\cong Y$ 
where $\A^n$ stands for the affine $n$-space over $\mathbb{k}$. In this and the subsequent papers 
we are interested in the biregular cancellation problem, hence the symbol `$\cong$' 
stands for a biregular isomorphism. We say that $X$ is a {\em Zariski factor} if, whenever $Y$ is an algebraic variety,
$X\times\A^n\cong Y\times\A^n$ implies $X\cong Y$ 
whatever is $n\in\N$. We say that $X$ is a {\em strong Zariski factor} if
any isomorphism $\Phi\colon X\times\A^n\to Y\times\A^n$ 
fits in a commutative diagram 
$$\bdi
X\times\A^n &\rTo<{\Phi} & Y\times\A^n \\
\dTo<{} &&\dTo>{} \\
X &\rTo>{\phi}<{\cong} & Y\, 
\edi$$
where the vertical arrows are the canonical projections. This property is usually called a {\em strong cancellation}. 
We say that $X$ is a {\em Zariski $1$-factor}
if $X\times\A^1\cong Y\times\A^1$ implies that $X\cong Y$, and a {\em strong Zariski $1$-factor} if the strong cancellation holds for $X$ with $n=1$. The latter implies
that the cylinder structure on $X\times\A^1$ is unique, see \cite[Thm.\ 2.18]{LZ}.

By a theorem of Abhyankar, Heinzer and Eakin (\cite[Thm.\ 6.5]{AHE}) any affine curve $C$ is a Zariski factor, and if $C\not\cong\A^1$ then $C$ is a strong Zariski factor. More generally,
by the Iitaka-Fujita Theorem (\cite{IF}) any algebraic variety  of non-negative log-Kodaira dimension is a strong Zariski factor. 
Due to a theorem by Bandman and Makar-Limanov (\cite[Lem.\ 2]{BML3}\footnote{Cf.\ \cite{Dr}; see \cite[Thm.\ 3.1]{CML2} for the positive characteristic case.}) the following holds.

\bthm[Bandman and Makar-Limanov]\label{thm: bml} The affine varieties which do not admit 
any effective $\G_a$-action are strong Zariski $1$-factors. \ethm

There are examples of smooth rational   affine surfaces of
negative log-Kodaira dimension which  are $\A^1$-fibered over $\PP^1$ and do not admit any effective $\G_a$-action, and so, are strong Zariski 1-factors, see \cite[Ex.\ 3]{BML3}, \cite[3.7]{GMMR}. Some of these affine surfaces are not Zariski 2-factors, see \cite{Du5, Du6}.

In this paper we concentrate on the Zariski Cancellation Problem for normal affine surfaces over
an algebraically closed field $\mathbb{k}$ of characteristic zero. From Theorem \ref{thm: bml} one can deduce the following criteria. 

\bcor\label{cor: strong-cancellation}
A normal affine surface $X$ is a strong Zariski $1$-factor if and only if it does not admit any effective $\G_a$-action, if and only if it is not fibered over a smooth affine curve $C$ with general fibers isomorphic to the affine line $\A^1$. 
\ecor

See, e.g., \cite[Thm.\ 2.18]{LZ} for the first part and \cite[Lem.\ 1.6]{FZ} for the second.

Recall (see e.g., \cite{FZ}) that a {\em parabolic $\mathbb{G}_m$-surface} is a normal affine surface $X$ equipped with an $\A^1$-fibration $\pi\colon X\to C$ over a smooth affine curve $C$ and with an effective $\G_m$-action along the fibers of $\pi$. Any fiber of $\pi$ on such a surface $X$ is isomorphic to $\A^1$. There is exactly one singular point of $X$ in each multiple fiber of $\pi$ and no further singularities. Any singular point $x\in X$ is a cyclic quotient singularity. If a parabolic $\mathbb{G}_m$-surface $X\to C$ is smooth then this is a line bundle over $C$. Any parabolic  $\G_m$-surface admits an effective  $\mathbb{G}_a$-action along the fibers of $\pi$ (\cite[Thm.\ 3.12]{FZ-lnd}).

 By the celebrated Miyanishi-Sugie-Fujita Theorem (\cite{MS, Fu}; see also \cite[Ch.\ 3, Thm.\ 2.3.1]{Mi}) the affine plane $\A^2$ is a Zariski factor. An analogous result holds for the parabolic $\G_m$-surfaces. Moreover, the following criterion holds.

\bthm\label{thm: Zariski 1-factor} For a normal affine 
surface $X$ equipped with an $\A^1$-fibration $X\to C$ over a smooth affine curve $C$
the following conditions are equivalent:
\begin{itemize}
\item[(i)] $X$ is a Zariski factor;
\item[(ii)]  $X$ is a Zariski $1$-factor; 
\item[(iii)] $X$ is a parabolic $\G_m$-surface. 
\end{itemize}
\ethm

The implication (i)$\Rightarrow$(ii) is immediate; see Theorem \ref{thm: Zariski-1-factors} for (ii)$\Rightarrow$(iii) and Theorem \ref{thm: Gm-surfaces-are-zar-factors} for (iii)$\Rightarrow$(i).

From Theorems \ref{thm: bml} and \ref{thm: Zariski 1-factor} 
one can deduce the following characterization.

\bcor\label{cor: characterization-1-factors} A normal affine 
surface  $X$ is a Zariski $1$-factor if and only if either $X$ does not admit any  effective $\mathbb{G}_a$-action, or $X$ is a  parabolic $\G_m$-surface. 
\ecor 

The Danielewski surfaces $$X_m=\{z^mt-u^2-1=0\}\subset\A^3,\qquad
m\in\N\,,$$ are examples of non-Zariski 1-factors  (\cite{Da, Fi}). Being pairwise non-homeomorphic (\cite{Fi}) these surfaces have isomorphic cylinders: 
$X_m\times\A^1\cong X_{m'}\times\A^1$ $\forall m,m'\in\N$. 
For non-Zariski 1-factors one can consider the following problems.

\medskip

{\bf Problems.} {\em Given an affine algebraic variety $X$, describe the moduli space $\mathcal{C}_m(X)$ 
of isomorphism classes of the  
affine  algebraic varieties $Y$ such that $X\times\A^m\cong Y\times\A^m$. 
Study the behavior of $\mathcal{C}_m(X)$ upon deformation of $X$.}

\medskip

Note that $X$ is a Zariski 1-factor if and only if $\mathcal{C}_1(X)=\{X\}$. 
There is no example of an affine non-Zariski 1-factor $X$
for which the moduli space $\mathcal{C}_1(X)$ were known. 
For the first Danielewski surface $X_1$ the moduli space
$\mathcal{C}_1(X_1)$ has infinite number of irreducible components. In \cite{Wi}  and \cite[Thm.\ 2.8]{MM1} this sequence is extended to a family 
of surfaces in $\A^3$ with similar properties. These examples show that $\mathcal{C}_1(X_1)$ possesses 
an infinite number of components which are infinite dimensional ind-varieties. 

In both Parts I and II we concentrate on the normal affine surfaces $\A^1$-fibered over affine curves. 
In particular, we show that, unless such a surface $X$ is a parabolic $\mathbb{G}_m$-surface, $X$ deforms in a large family of surfaces with isomorphic cylinders (see Theorems \ref{thm: GDF-cancellation-fixed-graph} and \ref{thm: cancellation-fixed-graph}). 
Moreover, the deformation space contains infinitely many connected components of growing dimensions. 

In Part II we prove the following theorem.
To an $\A^1$-fibered surface $\pi\colon X\to B$ over a smooth affine curve $B$ with reduced fibers one associates a non-separated one-dimensional affine scheme $\breve B$ over $B$ (the \emph{Danielewski-Fieseler quotient}) and an effective \emph{type divisor} $\tp.\div(\pi)$ on $\breve B$.

\bthm\la{thm: main0}
For two $\A^1$-fibered surfaces $\pi\colon X\to B$ and $\pi'\colon X'\to B$  with reduced fibers over the same smooth affine curve $B$, the cylinders $X\times\A^1$ and $X'\times\A^1$ are isomorphic over $B$ if and only if the corresponding Danielewski-Fieseler quotients $\breve B$ and $\breve B'$ are isomorphic over $B$ and the type divisors $\tp.\div(\pi)$ and $\tp.\div(\pi')$ on the common Danielewski-Fieseler quotient $\breve B$ are linearly equivalent.
\ethm

The proofs exploit the affine modifications (\cite{KZ}), in particular, the Asanuma modification (\cite{As})
and 
the flexibility techniques of \cite{AFKKZ}, in particular, the interpolation by automorphisms. 
As an illustration
we analyze from our viewpoint the examples of non-cancellation due to Danielewski (\cite{Da}), Fieseler (\cite{Fi}), Wilkens (\cite{Wi}), tom Dieck (\cite{tD}), 
Miyanishi and Masuda (\cite{MM1}), and the examples of Danielewski-Fieseler surfaces due to Dubouloz and Poloni (\cite{DP0}, \cite{Pol}).

\begin{rem} The results of Parts I and II were reported by the third author on the conference "Complex analyses and dynamical systems - VII" (Nahariya, Israel, May 10--15, 2015), in a seminar at the Bar Ilan University (Ramat Gan, Israel, May 24, 2015), and in the lecture course "Affine algebraic surfaces and the Zariski cancellation problem" at the University of Rome Tor Vergata (September--November, 2015; see the program in \cite{Zai}). When this paper was written the third author assisted at the lecture course  by Adrien Dubouloz on the cancellation problem for affine surfaces in the 39th Autumn School in Algebraic Geometry (Lukecin, Poland, September 19--24, 2016). In this course Adrien Dubouloz advertised a result on non-cancellation for smooth $\A^1$-fibered affine surfaces similar to our result (see, in particular, Theorem \ref{thm: def-0} below and Theorem \ref{thm: Zariski 1-factor} in the case of smooth surfaces), and indicated nice ideas of proofs done by completely different methods. He also posed the question whether the non-degenerate affine toric surfaces are Zariski 1-factors. This had been answered affirmatively by our Theorem \ref{thm: Zariski 1-factor}. \end{rem}

\section{Generalities}
\subsection{Cancellation and the Makar-Limanov invariant} \la{ss:ML}
The special automorphism group $\SAut X$ of an affine variety $X$ is the subgroup of  the group $\Aut X$ 
generated by all its 
$\G_a$-subgroups (\cite{AFKKZ}). The {\em Makar-Limanov invariant} $\ML(X)$ is the  subring of invariants 
of the action of $\SAut X$ on $\cO_X(X)$. The $\SAut X$-orbits are locally closed in $X$ (\cite{AFKKZ}). 
The \emph{complexity} $\kappa$ of the action of $\SAut X$ on $X$ is the codimension of its general orbit, 
or, which is the same, the transcendence degree of the ring $\ML(X)$ (\cite{AFKKZ}). 
We design this integer $\kappa$ as the {\em Makar-Limanov complexity} of $X$, and we say that $X$ 
belongs to the class $(\ML_\kappa)$. 

By the Miyanishi-Sugie Theorem (\cite{MS}, \cite[Ch.\ 2, Thm.\ 2.1.1, Ch.\ 3, Lem.\ 1.3.1 and Thm.\ 1.3.2]{Mi}) 
a normal affine surface $X$ with $\bar{k}(X)=-\infty$ 
contains a cylinder, that is, a principal 
Zariski open subset $U$ of the form $U\cong C\times\A^1$ where $C$ is a smooth affine curve. It possesses as well an 
$\A^1$-fibration $\mu\colon X\to B$ over a smooth curve $B$ which extends the first projection $U\to C$ 
of the cylinder. If $B$ is affine then 
$X$ admits an effective action of the additive group $\G_a=\G_a(\mathbb{k})$ along the rulings of $\mu$. 

Conversely, suppose that there is an effective $\G_a$-action on $X$. Then 
the algebra of invariants $\cO_X(X)^{\G_a}$ is finitely generated and normal (\cite[Lem.\ 1.1]{Fi}). 
Hence 
$B=\Spec \cO_X(X)^{\G_a}$ is a smooth affine curve and the morphism $\mu\colon X\to B$ induced 
by the inclusion $\cO_X(X)^{\G_a}\hookrightarrow  \cO_X(X)$
defines an $\A^1$-fibration 
(an affine ruling) on $X$. Such an $\A^1$-fibration is trivial over a Zariski open subset of $B$. 
It extends the first projection of a principal cylinder on $X$. If an $\A^1$-fibration on a surface $X$ over an affine base is 
unique (non-unique, respectively) then $X$ is of class $(\ML_1)$ (of class $(\ML_0)$, respectively). 
It  is of class $(\ML_2)$ if $X$ does not admit any $\A^1$-fibration over an affine curve. 
In the latter case $X$ still could  admit an $\A^1$-fibration 
over a  projective curve. It does admit such a fibration if and only if $\bar k (X)=-\infty$.

The cancellation problem is closely related to the  problem on stability of the Makar-Limanov 
invariant upon passing to a cylinder. The latter is  discussed, e.g.,  in \cite{BML1}--\cite{BML3} 
and \cite{CML1}-\cite{CM}. Suppose, for instance,  that $\ML(X)=\cO_X(X)$. Then by \cite[Thm.\ 3.1]{CML2} 
(cf.\ also \cite{Dr}), $\ML(X\times\A^1)=\cO_X(X)$. This means 
that the cylinder structure on $X\times\A^1$ is unique. Hence {\em an affine variety $X$ which does not admit 
any effective $\G_a$-action is a Zariski $1$-factor}. 
In particular, any smooth, affine surface of class $(\ML_2)$ is a Zariski 1-factor. Therefore, in the future 
we restrict to surfaces of classes  $(\ML_0)$ and $(\ML_1)$. 

In the Danielewski example, $X_1\in (\ML_0)$ whereas $X_m\in (\ML_1)$ for $m\ge 2$. 
Thus,  the Makar-Limanov complexity is not an invariant of cancellation (see also \cite{Du4} for an example of the Koras-Russell cubic threefold).  By contrast, the Euler characteristic, the Picard number (for a rational variety), the log-plurigenera, 
and the log-irregularity 
are cancellation invariants, see, e.g.,  Iitaka's Lemma in \cite[Ch.\ 2, Lem.\ 1.15.1]{Mi} and \cite[(9.9)]{Fu}. 
                                                                                                                                                                                                                                                                                                                                                                                                                                                        
\subsection{Non-cancellation and Gizatullin surfaces}\label{ss: non-cancellation} Let $X$ be a smooth 
affine surface. Recall (\cite{Gi}) 
that $\SAut X$ acts on $X$ with an open orbit if and only if $X\in\ML_0$. 
In the latter case $X$ is a 
{\em Gizatullin surface}, i.e., a normal affine surface completable by a chain 
of smooth rational curves 
and different from $\A^1\times (\A^1\setminus \{0\})$. Furthermore, 
the group $\SAut (X\times \A^1)$  
also acts with an open orbit on the cylinder  $X\times \A^1$. Thus,
the Makar-Limanov invariant 
$\ML(X\times \A^1)$ is 
trivial: $\ML(X\times \A^1)=\ML(X)=\mathbb{k}$. 

The following conjecture is inspired by \cite[\S 4, Thm. 1]{BML3} and the unpublished notes \cite{BML4} 
kindly offered to one of us by the authors.

\smallskip

\bconj\la{conj: flexible}
 Let $X$ be a 
normal affine surface such that the group $\SAut (X\times \A^1)$ acts with 
an open orbit in $X\times \A^1$. 
Then $\mathcal{C}_1(X)$ contains (the class of) a Gizatullin surface.
\econj

\smallskip

 Due to \cite[Thm.\ 1]{BML3} (see also 
an alternative proof in Part II) this conjecture is true for the Danielewski-Fieseler surfaces, that is, for the $\A^1$-fibered surfaces $\pi\colon X\to \A^1$ with a unique degenerated fiber, provided this fiber is reduced. 

\subsection{The Danielewski--Fieseler construction}\la{ss:DF-construction}
The Danielewski--Fieseler examples of non-cancellation exploit the properties of the
{\em Danielewski--Fieseler quotient}. 
Assume that the $\G_a$-action on $X$ is free. Then the geometric orbit space $X/\G_a$ 
is a non-separated pre-variety (an algebraic space) obtained by gluing together several 
copies of $B:=\Spec\mathcal{O}_X(X)^{\mathbb{G}_a}$ along a common 
Zariski open subset. The morphism $\mu$ can be factorized into $X\to X/\G_a\to B$. 
An ingenious observation by Danielewski is as follows. Consider two non-isomorphic smooth affine  
$\G_a$-surfaces $X$ and $Y$ with free $\G_a$-actions and with the same  
Danielewski--Fieseler quotient $F=X/\G_a=Y/G_a$. 
Then the affine threefold 
$W=X\times_F Y$ carries two induced free $\G_a$-actions. 
Moreover, $W$ carries 
two different structures of principal $\G_a$-bundles (torsors) over 
$X$ and over $Y$, respectively. Since $X$ and $Y$ are affine varieties, 
by Serre's Theorem (\cite{Se}) both these bundles are trivial, and so,
$X\times\A^1\cong W\cong Y\times\A^1$. This is exactly what happens for  two different Danielewski surfaces $X=X_m$ and $Y=X_{m'}$, $m\neq m'$, and in other clasical examples, see Section \ref{sec:examples}. 
The question arises as to how universal is the Danielewski-Fieseler construction. More precisely, 

\smallskip

{\bf Question.} {\em Let $X$ and $Y$ be non-isomorphic smooth affine surfaces 
with isomorphic cylinders $X\times\A^1\cong Y\times\A^1$. Assume that both $X$ and $Y$ 
possess free $\G_a$-actions. Do there  exist 
$\A^1$-fibrations on $X$ and on $Y$ over the same affine base and with the same 
Danielewski--Fieseler quotient?}

\smallskip

Recall (\cite[Def.\ 0.1]{Du0}) that a {\em Danielewski-Fieseler surface} is a smooth
affine surface $X$ equipped with an $\A^1$-fibration $\mu\colon X\to\A^1$ which represents 
a (trivial) line bundle over $\A^1\setminus\{0\}$ and such that the  divisor $\mu^*(0)$ is reduced.
Such a surface admits a free $\G_a$-action along the $\mu$-fibers if and only if it is isomorphic to a surface in $\A^3$ 
with equation $xy-p(z)=0$ where $p\in \mathbb{k}[z]$ has simple roots (\cite[Cor.\ 4.13]{Du0}). 
Theorem \ref{thm: GDF-cancellation-fixed-graph} below deals, more generally, with  
normal affine surfaces $\A^1$-fibered over affine 
curves and such that any fiber of the $\A^1$-fibration is reduced. Abusing the language 
we abbreviate these as {\em GDF-surfaces}, see Definition \ref{def: affine A1}. The Danielewski trick does not work for them, in general, because such a surface does not need to admit a free  $\G_a$-action.
However, we 
show (see Theorems \ref{thm: GDF-cancellation-fixed-graph}  and \ref{thm: GDF 1-factors}) 

\bthm\la{thm: def-0}
A GDF-surface is a Zariski $1$-factor if and only if it is the total space of a line bundle. 
\ethm

 The proof involves affine modifications, in particular, the 
{\em Asanuma modification}. 

\subsection{Affine modifications}
Most of the known examples of non-cancellable 
affine surfaces exploit the Danielewski--Fieseler quotient,
see, e.g., \cite{MM1, Wi}. 
By contrast,  in this paper we use an alternative  construction of non-cancellation due to T.~Asanuma (\cite{As}). 
Recall first the notion of an affine modification  (see \cite{KZ}). 

\bdefi[\emph{Affine modification}]\label{def: aff-modif} Let $X=\Spec \mathfrak{A}$ be a normal affine variety where 
$\mathfrak{A}=\mathcal{O}_X(X)$ is the structure ring of $X$. Let further $I\subset \mathfrak{A}$ be 
an ideal, and let $f\in I$, $f\neq 0$. Consider the Rees algebra $\mathfrak{A}[tI]=
\bigoplus_{n\ge 0} t^nI^n$ with $I^0=\mathfrak{A}$ where  $t$ is an independent variable. 
Consider further the quotient  $\mathfrak{A}'=\mathfrak{A}[tI]/(1-tf)$ by the principal ideal of $\mathfrak{A}[tI]$
generated by $1-tf$.
The affine variety $X'=\Spec \mathfrak{A}'$ is called the {\em affine modification of $X$ along the divisor $D=f^*(0)$ with center $I$}. 
The inclusion $\mathfrak{A}\hookrightarrow \mathfrak{A}'$ induces a birational morphism $\rho\colon X'\to X$ which contracts the exceptional divisor $E=(f\circ\rho)^{-1}(0)$ 
on  $X'$ to the center $\mathbb{V}(I)\subset X$. In fact, 
any birational morphism of affine varieties $X'\to X$ is an affine modification (\cite[Thm.\ 1.1]{KZ}). 
\edefi

\brems\label{rem: aff-modif} 1. If $I=(\mathfrak{a}_1,\ldots,\mathfrak{a}_l)$ where $\mathfrak{a}_i\in \mathfrak{A}$, $i=1,\ldots, l$ then $\mathfrak{A}'=\mathfrak{A}[I/f]=\mathfrak{A}[\mathfrak{a}_1/f,\ldots,\mathfrak{a}_l/f]$.

2. Assume that $f\in I_1\subset I$ where $I_1$ is an ideal of $\mathfrak{A}$. Letting 
$\mathfrak{A}_1=\mathfrak{A}[I_1/f]$ one obtains the equality $\mathfrak{A}'=\mathfrak{A}_1[I_2/f]$ where $I_2$ is the ideal generated by $I$ in $\mathfrak{A}_1$. 
The inclusion $\mathfrak{A}\hookrightarrow \mathfrak{A}_1\hookrightarrow \mathfrak{A}'$ leads to a factorization of the morphism 
$X'\to X$ into a composition of affine modifications, that is, 
birational morphisms of affine varieties $X'\to X_1\to X$ 
where $X_1={\rm spec}\, \mathfrak{A}_1$ (cf.\  also \cite[Prop.\ 1.2]{KZ} for a different kind of factorization).

3. Geometrically speaking, the variety $X'=\Spec \mathfrak{A}'$ is obtained via blowing up  
$X=\Spec \mathfrak{A}$ at the ideal $I\subset \mathfrak{A}$ and deleting 
a certain transform of the divisor $D$ on $X'$, see \cite{KZ} for  details. However, in general $\mathbb{V}(I)$ might have components of codimension 1 which are then also components of the divisor $f^*(0)$. These components survive the modification. Thus, it is worth to distinguish 
between a {\em geometric} affine modification and an {\em algebraic} one. 

Indeed, given a birational morphism of affine varieties $\sigma\colon X'\to X$ with exceptional divisor $E\subset X'$ and center $C=\sigma_*(E)$ of codimension at least 2, the divisor $D$ of the associated modification can be defined as the closure of $X\setminus \sigma(X')$ in $X$. However, this $D$ is not necessarily a principal divisor. So, in order to represent $\sigma\colon X'\to X$ via an affine modification one needs to find a principal divisor on $X$ with support containing $D$. Thus, although
the data $(D,C)$ is uniquely defined for $\sigma$, there are many different affine modifications which induce the same birational morphism $\sigma\colon X'\to X$
(cf.\  \cite{Du-1} and also Remark \ref{rem: non-complete-intersection} for the case of $\A^1$-fibered affine surfaces). 
\erems

The following lemma will be used on several occasions. It generalizes  \cite[Cor.\ 2.2]{KZ} with a similar proof. 

\begin{lem}\label{lem: preserving-isomorphisms} Let $X'\to X$ and $Y'\to Y$ be affine modifications along principal divisors $D_X=\div f_X$ and $D_Y=\div f_Y$ with centers $I_X$ and $I_Y$, respectively,  where $f_X\in I_X\setminus\{0\}$ and $f_Y\in I_Y\setminus\{0\}$. If an isomorphism $\phi\colon X\stackrel{\cong}{\longrightarrow} Y$ sends $f_Y$ to $f_X$ {\rm (}hence, $D_X$ to $D_Y${\rm )} and $I_Y$ onto $I_X$ then $\varphi$ admits a lift to an isomorphism $\phi'\colon X'\stackrel{\cong}{\longrightarrow} Y'$. 
\end{lem}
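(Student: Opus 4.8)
The plan is to pass to algebra and realize both modifications as explicit subrings of fraction fields, where the hypotheses on $\phi$ translate into a clean matching of generators. Write $\mathfrak{A}=\cO_X(X)$ and $\mathfrak{B}=\cO_Y(Y)$, and let $\phi^*\colon\mathfrak{B}\xrightarrow{\cong}\mathfrak{A}$ denote the comorphism of $\phi$; the hypotheses then read $\phi^*(f_Y)=f_X$ and $\phi^*(I_Y)=I_X$ as ideals. Since $X$ and $Y$ are normal varieties, $\mathfrak{A}$ and $\mathfrak{B}$ are domains and $f_X,f_Y$ are nonzerodivisors, so by Remark \ref{rem: aff-modif}(1) I may present the modified rings as
\[
\mathfrak{A}'=\mathfrak{A}[I_X/f_X]\subset\Frac\mathfrak{A},\qquad \mathfrak{B}'=\mathfrak{B}[I_Y/f_Y]\subset\Frac\mathfrak{B},
\]
each generated over its base ring by the fractions $\mathfrak{a}/f_X$ $(\mathfrak{a}\in I_X)$, resp.\ $\mathfrak{b}/f_Y$ $(\mathfrak{b}\in I_Y)$.

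Next I would extend $\phi^*$ to an isomorphism of fraction fields $\widetilde{\phi^*}\colon\Frac\mathfrak{B}\xrightarrow{\cong}\Frac\mathfrak{A}$; because $\phi^*(f_Y)=f_X$ this carries $\mathfrak{B}_{f_Y}$ onto $\mathfrak{A}_{f_X}$. The core of the argument is to check that $\widetilde{\phi^*}$ maps the subring $\mathfrak{B}'$ onto the subring $\mathfrak{A}'$. For one inclusion, a generator $\mathfrak{b}/f_Y$ with $\mathfrak{b}\in I_Y$ is sent to $\phi^*(\mathfrak{b})/f_X$, and $\phi^*(\mathfrak{b})\in\phi^*(I_Y)=I_X$ shows this lies in $\mathfrak{A}'$; the reverse inclusion is symmetric, using that $(\phi^*)^{-1}(I_X)=I_Y$, so that each generator $\mathfrak{a}/f_X$ of $\mathfrak{A}'$ is hit by $(\phi^*)^{-1}(\mathfrak{a})/f_Y\in\mathfrak{B}'$. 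This yields an isomorphism $\phi'^*\colon\mathfrak{B}'\xrightarrow{\cong}\mathfrak{A}'$ restricting to $\phi^*$ on $\mathfrak{B}$, whose associated morphism $\phi'\colon X'\xrightarrow{\cong}Y'$ is the desired lift, compatible with the modification morphisms $\rho_X,\rho_Y$ via $\phi\circ\rho_X=\rho_Y\circ\phi'$.

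I expect the only delicate point to be this mutual containment of the two subrings, and it is precisely here that both hypotheses are needed: matching the functions $f_X,f_Y$ is what makes $\widetilde{\phi^*}$ respect the relevant localizations, while matching the ideals $I_X,I_Y$ is what forces the distinguished fractions generating $\mathfrak{A}'$ and $\mathfrak{B}'$ to correspond. No essential difficulty beyond this bookkeeping is anticipated; the argument is the natural extension of the proof of \cite[Cor.\ 2.2]{KZ} from the setting of a single variety with a self-map to that of two distinct modifications related by an isomorphism of their bases.
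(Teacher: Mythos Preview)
Your proof is correct and is exactly the argument the paper has in mind: the paper does not spell out a proof of this lemma but simply says it ``generalizes \cite[Cor.\ 2.2]{KZ} with a similar proof,'' and your write-up is precisely that similar proof, using the presentation $\mathfrak{A}'=\mathfrak{A}[I_X/f_X]$ from Remark~\ref{rem: aff-modif}(1) and checking that the extension of $\phi^*$ to fraction fields carries the generating fractions of $\mathfrak{B}'$ onto those of $\mathfrak{A}'$. The very next lemma in the paper (Lemma~\ref{lem: lift}) is proved by the same mechanism, confirming that this is the intended approach.
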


We need also the following version of this lemma.

\blem\la{lem: lift} 
Let $X$ and $Y$ be affine varieties, and let $\sigma\colon X\to Y$ be an affine modification along a principal divisor $\cD=f^{*}(0)$ in $Y$ with center an ideal $I\subset\cO_{Y}(Y)$ where $f\in I\setminus\{0\}$. Let $\alpha\in\Aut Y$ be such that $\alpha(f)=f$ and both $\alpha$, $\alpha^{-1}$ induce the identity on the $s$th infinitesimal neighborhood of $\cD$ for some  $s\ge 1$, that is, $$\alpha\equiv \id\mod\, f^s\quad\mbox{and}\quad \alpha^{-1}\equiv \id\mod f^s\,.$$ Then $\alpha$ can be lifted to an automorphism $\widetilde\alpha\in\Aut X$ such that \be\la{eq: congru} \widetilde\alpha\equiv \id\mod\, f^{s-1}\quad\mbox{and}\quad \widetilde\alpha^{-1}\equiv \id\mod\, f^{s-1}\,.\ee 
\elem

\bproof Let $\mathfrak{A}=\cO_Y(Y)$ and $\mathfrak{A}'=\cO_X(X)=\mathfrak{A}[\mathfrak{a}_1/f,\ldots,\mathfrak{a}_l/f]$ where $\mathfrak{a}_1,\ldots,\mathfrak{a}_l$ are
generators of $I$. One has $\alpha^*(\mathfrak{a}_i)-\mathfrak{a}_i\in (f^s)$, that is, $\alpha^*(\mathfrak{a}_i)=\mathfrak{a}_i+f^s\mathfrak{b}_i$ for some $\mathfrak{b}_i\in \mathfrak{A}$, $i=1,\ldots,l$. Extending $\alpha^*$ to an automorphism of the fraction field $\Frac \mathfrak{A}$ one has $\alpha^*(\mathfrak{a}_i/f)=\mathfrak{a}_i/f + f^{s-1}\mathfrak{b}_i$, $i=1,\ldots,l$. Thus, $\alpha^*(\mathfrak{A}')\subset \mathfrak{A}'$ and,  similarly, $(\alpha^{-1})^*(\mathfrak{A}')\subset \mathfrak{A}'$. So, $\widetilde\alpha^*:=\alpha^*|_{\mathfrak{A}'}\in\Aut \mathfrak{A}'$ yields  an automorphism $\widetilde\alpha$ of $X$ verifying (\ref{eq: congru}). 
\eproof

It is easily seen that the affine modification of the linear space $\A^n$ with center in 
a linear subspace of codimension $\ge 2$  and with divisor a hyperplane is isomorphic 
to $\A^n$. Similarly, 
certain affine \emph{Asanuma modifications} of a cylinder 
give again a cylinder. This simple and elegant fact is due to Asanuma (\cite{As}); we follow here  \cite[Lem.\ 7.9]{Ka}.

\begin{lem}\label{lem: as-trick0} Let $X$ be an affine variety, $D=\div f$ a principal effective divisor on $X$ where $f\in\cO_X(X)\setminus\{0\}$, and $I\subset\cO_X(X)$ an ideal with support contained in $D$. Let $X'\to X$ be the affine modification of $X$ along $D$ with  center $I$. Consider the cylinder $\mathcal{X}=X\times\A^1=\Spec \mathcal{O}_X(X)[v]$, the divisor $\cD=D\times\A^1$ on $\cX$, the ideal $\tilde I\subset \cO_{\cX}(\cX)$   generated by $I$, and
the ideal $J=(\tilde I,v)\subset A[v]$ supported on $D\times\{0\}\subset \mathcal{D}$. Then the affine  modifications of  $\mathcal{X}$ along $\mathcal{D}$ with center $\tilde I$ and with center $J$ are both isomorphic to the cylinder $\cX'=X'\times\A^1$.
\end{lem}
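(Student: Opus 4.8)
The plan is to prove both isomorphisms by computing the coordinate rings of the two modifications directly, realizing each as a subalgebra of one localization in which the manipulations become transparent. First I would fix notation. Write $\mathfrak{A}=\cO_X(X)$ and choose generators $I=(\mathfrak{a}_1,\ldots,\mathfrak{a}_l)$ with $f\in I$ (as is forced by the modification along $D=\div f$ being defined in the sense of Definition \ref{def: aff-modif}). Since $D=\div f$ is a genuine effective principal divisor, $f$ is a non-zerodivisor, so $\mathfrak{A}$ embeds into the localization $\mathfrak{A}_f=\mathfrak{A}[f^{-1}]$, and by Remark \ref{rem: aff-modif}.1 the modification $X'\to X$ has coordinate ring
\[
\mathfrak{A}'=\mathfrak{A}[I/f]=\mathfrak{A}[\mathfrak{a}_1/f,\ldots,\mathfrak{a}_l/f]\subseteq\mathfrak{A}_f .
\]
The cylinder $\cX$ has coordinate ring $\mathfrak{A}[v]$, which embeds into $\mathfrak{A}_f[v]=\mathfrak{A}[v]_f$; all computations below take place inside this ring.

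For the modification with center $\tilde I=I\cdot\mathfrak{A}[v]$, the same generators $\mathfrak{a}_1,\ldots,\mathfrak{a}_l$ work and $f\in\tilde I$, so Remark \ref{rem: aff-modif}.1 gives
\[
\cO_{\cX}(\cX)[\tilde I/f]=\mathfrak{A}[v][\mathfrak{a}_1/f,\ldots,\mathfrak{a}_l/f]=\mathfrak{A}[\mathfrak{a}_1/f,\ldots,\mathfrak{a}_l/f][v]=\mathfrak{A}'[v],
\]
since the $\mathfrak{a}_i/f$ commute with $v$ and generate the same subalgebra of $\mathfrak{A}_f[v]$ whatever the order of adjunction. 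This is the coordinate ring of $X'\times\A^1=\cX'$, settling the first case. For the modification with center $J=(\mathfrak{a}_1,\ldots,\mathfrak{a}_l,v)$ one has $f\in I\subseteq J$, so it is again defined along $\cD=\div f$, with coordinate ring
\[
\mathfrak{B}:=\mathfrak{A}[v][J/f]=\mathfrak{A}[v,\,\mathfrak{a}_1/f,\ldots,\mathfrak{a}_l/f,\,v/f]\subseteq\mathfrak{A}_f[v].
\]
The crux, which is exactly the content of Asanuma's trick, is to introduce the new element $w:=v/f$. As $f$ is a non-zerodivisor and $v$ is transcendental over $\mathfrak{A}_f$, the element $w$ is transcendental over $\mathfrak{A}_f$ and $\mathfrak{A}_f[v]=\mathfrak{A}_f[w]$ with $v=fw$. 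Once $w$ is adjoined, the generator $v=fw$ becomes redundant (because $f\in\mathfrak{A}$), whence
\[
\mathfrak{B}=\mathfrak{A}[w,\,\mathfrak{a}_1/f,\ldots,\mathfrak{a}_l/f]=\mathfrak{A}[\mathfrak{a}_1/f,\ldots,\mathfrak{a}_l/f][w]=\mathfrak{A}'[w].
\]
Since $w$ is transcendental over $\mathfrak{A}'\subseteq\mathfrak{A}_f$, this presents $\mathfrak{B}$ as a polynomial ring in one variable over $\mathfrak{A}'$, so $\Spec\mathfrak{B}\cong X'\times\A^1=\cX'$, settling the second case.

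I do not expect a genuine obstacle here: the argument is elementary once it is set up inside the single ambient ring $\mathfrak{A}_f[v]$. The two points that require care are, first, that $f$ be a non-zerodivisor — guaranteed by $D=\div f$ being an honest effective divisor, and what lets the modifications be described as subalgebras of a localization rather than merely as Rees-algebra quotients — and, second, the recognition that the fraction $w=v/f$ plays the role of a fresh cylinder coordinate, absorbing the blow-up of the extra generator $v$ while leaving the cylinder direction intact. The only bookkeeping is to keep track of the identity $v=fw$ and to check that adjoining $w$ renders the original variable $v$ redundant; everything else is the observation that adjoining central generators of the form $\mathfrak{a}_i/f$ commutes with adjoining a polynomial variable.
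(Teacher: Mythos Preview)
Your proof is correct and takes essentially the same approach as the paper's. The paper's argument is the one-line observation $\cO_X(X)[\mathfrak{a}_1/f,\ldots,\mathfrak{a}_l/f,v]\cong\cO_X(X)[\mathfrak{a}_1/f,\ldots,\mathfrak{a}_l/f,v'/f]$ via the change of variable $v'=vf$; your introduction of $w=v/f$ is the same substitution read in the other direction, and your added care about $f$ being a non-zerodivisor and $w$ being transcendental over $\mathfrak{A}'$ simply makes explicit what the paper leaves tacit.
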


\begin{proof} The affine modification  of 
$\cX$ along $\cD$ with center $\tilde I$ yields the cylinder  $\cX'$. 
Let  $\mathfrak{a}_1, \ldots, \mathfrak{a}_l\in I$ be generators of $I$, see \ref{def: aff-modif} and \ref{rem: aff-modif}. 
Then $$\mathcal{O_{\mathcal{X}'}}(\mathcal{X}')=\cO_X(X)[\mathfrak{a}_1/f,\ldots, \mathfrak{a}_l/f, v]\cong \cO_X(X)[\mathfrak{a}_1/f,\ldots, \mathfrak{a}_l/f, v'/f]=\mathcal{O}_{\mathcal{X}''}(\mathcal{X}'')\,,$$ 
where $v'=vf$ is a new variable, and $\mathcal{X}''\to \mathcal{X}$ is the affine modification of $\mathcal{X}$ along $\mathcal{D}$ with center  $J$. This
gives the desired isomorphism.  \end{proof}

\section{$\A^1$-fibered surfaces via affine modifications} 
\subsection{Covering trick and GDF surfaces}\label{ss: GDF}
Throughout the paper we deal with the following class of $\A^1$-fibered surfaces.

\begin{defi}[\emph{a GDF surface}]\label{def: affine A1} Let $X$ 
be a normal affine surface over $\mathbb{k}$. A morphism
$\pi\colon  X\to B$  onto a smooth affine curve $B$ is called an {\em  $\A^1$-fibration} 
if the fiber $\pi^{-1}(b)$ over a general point $b\in B$ is isomorphic to the affine line $\A^1$ 
over $\mathbb{k}$. An $\A^1$-fibered surface $\pi\colon X\to B$ is called a 
{\em generalized Danielewski-Fieseler surface}, or a {\em GDF surface} for short, 
if all the fibers $\pi^*(b)$, $b\in B$, 
are reduced. In the case where  $B=\A^1$ and $\pi^{-1}(0)$ is the only reducible fiber of $\pi$ such surfaces were studied in  \cite{Du0} under the name \emph{Danielewski-Fieseler surfaces}.

Any GDF surface is smooth, see, e.g.,  \cite{Du0}  or Lemma \ref{lem: standard GDF-procedure}(b) below. 

We say that a GDF surface $\pi\colon  X\to B$ is {\em marked} if a \emph{marking} $z\in\mathcal{O}_B(B)\setminus\{0\}$ is given such that $z\circ\pi  \in\mathcal{O}_X(X)$ vanishes to order one along any degenerate fiber of $\pi$. Abusing notation we often view $z$ as a function on $X$ identifying $z$ and $z\circ\pi$. The components of the divisor $z^*(0)$ will be called \emph{special fiber components}.

A GDF surface $\pi\colon X\to B$ equipped with actions of a finite group $G$ on $X$ and on $B$ making the morphism $\pi$ $G$-equivariant is called a {\em GDF $G$-surface}. Assume that $G=\mu_d$  is the group of $d$ths roots of unity,  and choose  a $\mu_d$-quasi-invariant marking  $z\in\cO_B(B)$ of weight 1. Then we say that $\pi\colon X\to B$ is a {\em marked GDF $\mu_d$-surface}.
 \end{defi} 

Lemma \ref{lem: br-covering} below  is well known; for the sake of completeness we indicate a proof.  This lemma
says that, starting with a normal affine $\A^1$-fibered surface and  applying a suitable cyclic 
Galois base change, it is possible to obtain a marked GDF $\mu_d$-surface.
The proof uses the following branched covering construction.

\bdefi[\emph{Branched covering construction}]\label{sit: construction} 
Consider a normal affine  $\A^1$-fibered surface $\pi'\colon  Y\to C$  over a smooth affine curve $C$. Fix a finite set of points $p_1,\ldots,p_t\in C$ such that for any $p\in C\setminus\{p_1,\ldots,p_t\}$ the fiber ${\pi'}^*(p)$ is reduced and irreducible. 
Let 
$d$ be the least common 
multiple of the multiplicities of the components of the divisor $\sum_{i=1}^t{\pi'}^*(p_i)$ on $Y$. 
Choose a regular function $h\in\mathcal{O}_C(C)$ 
with only simple zeros which vanishes in the points 
$p_1, \ldots , p_t$ and eventually somewhere else.
Letting $\A^1= {\rm spec}\,\mathbb{k}[z]$
consider the smooth curve $B\subset C \times \A^1$ given by equation $z^d-h(p)=0$ where $(p,z)\in C\times\A^1$ 
along with the morphism ${\rm pr}_1\colon B\to C$.  By abuse of notation we denote the function $z|_{B}\in\cO_B(B)$ still by $z$.  
Let $X$ be the normalization  
of the cross-product $Y\times_C B$, and let $\pi\colon X\to B$ and $\varphi\colon X\to Y$ 
be the induced morphisms.  
\edefi

\begin{lem}\label{lem: br-covering}  In the notation of {\rm \ref{sit: construction}} the following holds.
\begin{itemize}\item
The cyclic group $\mu_d$ of order $d$ acts naturally on $B$ so that
$C=B/\mu_d$; \item 
the morphism ${\rm pr}_1\colon B\to C$ is
ramified to order $d$ over the zeros of $h$. The function $z\in\cO_B(B)$  is a $\mu_d$-quasi-invariant  of weight $1$, and $\div z={\rm pr}_1^*(\div h)$ is a reduced effective $\mu_d$-invariant divisor on $B$; \item the morphism $\varphi\colon X\to Y$ of $\A^1$-fibrations 
is a  cyclic covering with the Galois group $\mu_d$, the reduced branching divisor $h^*(0)$ on $Y$, and the ramification divisor $z^*(0)$ on $X$;
\item
the  $\mu_d$-equivariant morphism $\pi\colon X\to B$ and the marking $z\in\cO_B(B)$ define  a structure of 
a marked GDF $\mu_d$-surface on $X$.
\end{itemize}\end{lem}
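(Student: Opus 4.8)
The plan is to verify each of the four bullet points of Lemma~\ref{lem: br-covering} by tracing through the branched covering construction of \ref{sit: construction}, using standard properties of cyclic covers defined by extracting a $d$th root of a function with simple zeros. Throughout, I write $B\subset C\times\A^1$ for the curve $z^d-h(p)=0$, and I recall that $h\in\cO_C(C)$ has only simple zeros.

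\emph{First bullet (the $\mu_d$-action and $C=B/\mu_d$).} I would let $\zeta$ be a primitive $d$th root of unity in $\bk$ and define the $\mu_d$-action on $C\times\A^1$ by $\zeta\cdot(p,z)=(p,\zeta z)$. Since $(\zeta z)^d-h(p)=z^d-h(p)$, this action preserves $B$, and the quotient map $B\to B/\mu_d$ is visibly identified with $\pr_1\colon B\to C$: the coordinate ring $\cO_B(B)=\cO_C(C)[z]/(z^d-h)$ is a free $\cO_C(C)$-module with basis $1,z,\ldots,z^{d-1}$ on which $\mu_d$ acts by $z^j\mapsto\zeta^j z^j$, so the ring of invariants is exactly $\cO_C(C)$. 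This gives $C=B/\mu_d$. Smoothness and irreducibility of $B$ follow because $h$ has simple zeros, so the polynomial $z^d-h$ is a normal crossing-free smooth complete intersection condition; I would check the Jacobian criterion at the zeros of $h$ (where $z=0$ and $dz^{d-1}-\partial h$ does not vanish identically since $h$ vanishes to order one).

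\emph{Second bullet (ramification of $\pr_1$ and properties of $z$).} The weight-$1$ quasi-invariance $\zeta\cdot z=\zeta^{-1}z$ (or $\zeta z$, depending on the convention fixed above) is immediate from the definition of the action. For the divisor, over a simple zero $p_0$ of $h$ the fiber of $\pr_1$ is the single point $z=0$ with multiplicity $d$, so $\pr_1$ is totally ramified of order $d$ there; away from the zeros of $h$ the $d$ roots are distinct and $\pr_1$ is \'etale. Computing $\div z$ on $B$: since $z^d=h\circ\pr_1$ on $B$, one has $d\cdot\div z=\pr_1^*(\div h)$, and because $\pr_1$ is ramified to order $d$ exactly over the zeros of $h$, the pullback $\pr_1^*(\div h)$ equals $d$ times a reduced divisor, whence $\div z$ is reduced; its $\mu_d$-invariance is clear as $\div z$ is the $\mu_d$-fixed locus $\{z=0\}$.

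\emph{Third and fourth bullets (the cyclic cover $\varphi$ and the marked GDF $\mu_d$-structure).} Here $X$ is the normalization of $Y\times_C B$, which carries the lifted $\mu_d$-action from the second factor; since the $\mu_d$-action on $B$ is free away from $\{z=0\}$ and $\varphi$ is obtained by normalizing a fiber product with the cyclic cover $B\to C$, the induced $\varphi\colon X\to Y$ is a cyclic $\mu_d$-covering with Galois group $\mu_d$, branch divisor the reduced divisor $h^*(0)$ on $Y$ and ramification divisor $z^*(0)$ on $X$. The last bullet is the main point and the place I expect the real work to lie: I must verify that $\pi\colon X\to B$ is a \emph{GDF} surface, i.e.\ that $X$ is normal (automatic, being a normalization) and that \emph{every} fiber $\pi^*(b)$ is reduced. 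The key calculation is that $d$ was chosen as the least common multiple of the multiplicities of the components of $\sum_i{\pi'}^*(p_i)$: over each $p_i$, a component of the original fiber of multiplicity $m_i\mid d$ gets pulled back along a base change totally ramified to order $d$ and then normalized, and the standard computation for normalization of a fiber product with a cyclic cover divides these multiplicities by $\gcd$ with the ramification order, turning every multiplicity $m_i$ into $m_i/\gcd(m_i,d)=1$. I would carry this out locally at a point of a degenerate fiber, reducing to the model $\Spec\bk[u,v]/(uv^{m}-\cdots)$ pulled back under $z^d=h$ and normalized, checking reducedness of the resulting fiber directly. Finally, $z\circ\pi$ vanishes to order one along each degenerate fiber precisely because $\div z$ is reduced (second bullet), so $z$ is a legitimate marking of weight $1$, giving the marked GDF $\mu_d$-structure. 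The subtlety to watch is the interaction of normalization with the non-reduced fibers: I expect the heart of the argument, and the only genuinely nontrivial step, to be the local verification that the least-common-multiple choice of $d$ forces all fibers of $\pi$ to become reduced after normalization.
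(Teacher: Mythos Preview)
Your proposal is correct and follows essentially the same approach as the paper. Both arguments dispatch the first three bullets via the standard properties of the cyclic cover $z^d=h$ and isolate the main content in the fourth bullet, reducing to a local computation near a fiber component of multiplicity $n\mid d$; the paper carries this out with the explicit model $z^d-t^n=0$ (choosing local coordinates $(t,u)$ on $Y$ with $h\circ\pi'=t^n$), which factors as $\prod_{\zeta^n=1}(z^{d/n}-\zeta t)$ and hence normalizes to $n$ disjoint smooth sheets on which $z$ has order one---exactly the computation you announce, though your suggested local model ``$\Spec\bk[u,v]/(uv^m-\cdots)$'' is not quite the right shape and should be replaced by the cleaner $z^d=t^n$ picture.
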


\begin{proof} The map $\nu_d\colon\A^1\to\A^1$, $z\mapsto z^d$, is the quotient morphism of 
the natural $\mu_d$-action on $\A^1$. The first three statements follow from the fact that the curve $B$ 
along with the morphism $z\colon B\to \A^1$ is obtained using the morphism  $h\colon C\to\A^1$ 
via the base change $\nu_d\colon\A^1\to\A^1$ that fits in the commutative diagram
\be\la{diagr: 2.3}\bdi
X &\rTo<{/\mu_d} & Y\\
\dTo<{\pi} &&\dTo>{\pi'} \\
B &\rTo<{/\mu_d} & C\\
\dTo<{z} &&\dTo>{h} \\
\A^1 &\rTo>{\nu_d} & \A^1
\edi\ee
The remaining assertions can be reduced to a simple computation in  local charts. Indeed, let $(t,u)$ be coordinates in a local analytic chart $U$ in $Y$ centered at a smooth point $y\in Y$ which is a general point of a fiber component $F$ over $p_i$ of multiplicity $n$ in the divisor $(\pi')^*(p_i)$. We may choose $t$ so that $h\circ\pi'|_U=t^n$ and $F\cap U=t^*(0)$. Then $Y\times_C B$ is given locally in $\A^3$ with coordinates $(z,t,u)$ by equation $z^d-t^n=0$ where $n|d$ by our choice of $d$. This is a union of $n$ smooth surface germs $\{z^{d/n}-\zeta t=0\}$ where $\zeta^n=1$, meeting transversely along the line $z=t=0$ that projects  in $Y$ onto $F\cap U$. Passing to a normalization one gets $n$ smooth disjoint surface germs, say, $V_1,\ldots,V_n$ in $X$ over $U$. The function $z\in\cO_X(X)$ gives in each chart $V_j$ a local coordinate such that $\phi^*(F)=z^*(0)$ has multiplicity one in $V_j$. 
We leave the further details to the reader.  \end{proof}

\bsit[\emph {Cancellation Problem for surfaces: a reduction}] \label{rem: GDF-cyl} 
The following reasoning is borrowed in \cite{MM1, MM2, tD}. It occurs that in order 
to construct (families  of) $\A^1$-fibered surfaces with isomorphic cylinders 
it suffices to construct (families  of) $\A^1$-fibered GDF $G$-surfaces 
with $G$-equivariantly isomorphic cylinders. 

Suppose that a Galois base change $B\to C$ with a Galois group $G$ 
applied to two distinct $\A^1$-fibered surfaces $\pi'_j\colon  Y_j\to C$, $j=0,1$, 
yields two $\A^1$-fibered GDF $G$-surfaces $\pi_j\colon  X_j\to B$, $j=0,1$, with  
$G$-isomorphic over $B$ cylinders $X_0\times\mathbb{A}^1\cong_{G,B}X_1\times\mathbb{A}^1$  where in the both cases $G$ acts identically on the second factor $\mathbb{A}^1$.  
Clearly, one has $(X_j\times\mathbb{A}^1)/G\cong Y_j\times\mathbb{A}^1$, $j=0,1$. 
Passing to the quotients yields an isomorphism over $C$  of cylinders 
$Y_0\times\mathbb{A}^1\cong_C Y_1\times\mathbb{A}^1$ that fits in the commutative diagram
\begin{diagram}[notextflow]
   X_0\times\mathbb{A}^1  &    &\rTo^{\cong_G} &      &   X_1\times\mathbb{A}^1   \\
      & \rdTo_{/G} &      &      & \vLine^{}& \rdTo_{/G}  \\
\dTo^{} &    &   Y_0\times\mathbb{A}^1   & \rTo^{\cong}  & \HonV   &    &  Y_1\times\mathbb{A}^1  \\
      &    & \dTo^{}  &      & \dTo   \\
   B  & \hLine & \VonH   & \rTo^{\rm id} &   B   &    & \dTo_{} \\
      & \rdTo_{/G} &      &      &      & \rdTo_{/G}  \\
      &    &   C   &      & \rTo^{\rm id}  &    &  C  \\
\end{diagram}

\esit

In the sequel we will concentrate on the following problem.
Consider the cylinders $X\times\A^1$ and $X'\times\A^1$ over two
$\A^1$-fibered GDF surfaces $\pi\colon X\to B$ and $\pi'\colon X'\to B$ 
with the same smooth affine base $B$. Suppose that $\pi$ and $\pi'$ 
are equivariant with respect to actions of a finite group $G$ on $X, X'$, 
and $B$. We extend these actions to $G$-actions on the cylinders $X\times\A^1$ 
and $X'\times\A^1$  identically on the second factor.  

\bprob\label{prob: G-isom-cyl} 
 {\em Find a criterion for two GDF $\,\,G$-surfaces $\pi_X\colon X\to B$ and $\pi_{X'}\colon X'\to B$ over the same base $B$ as to when 
the cylinders $X\times\A^1$ and $X'\times\A^1$ 
are $G$-equivariantly isomorphic.} 
\eprob

In Theorems \ref{thm: GDF-cancellation-fixed-graph}, \ref{thm: cancellation-fixed-graph} and Propositions \ref{prop: isomorphism} and \ref{prop:equiv-stretching}  we provide 
some sufficient conditions in the case where $G=\mu_d$. 
Actually, these conditions guarantee the existence of a  $G$-equivariant isomorphism 
$X\times\A^1\stackrel{\cong_{B,G}}{\longrightarrow} X'\times\A^1$ which respects the 
natural projections $X\times\A^1\to B$ and $X'\times\A^1\to B$ and induces the identity on $B$. 

\subsection{Pseudominimal completion and extended divisor}\label{extdiv}
\bdefi[\emph {Pseudominimal  resolved completion}]\label{resi} 
Any $\A^1$-fibration $\pi\colon  X\to B$ on a normal affine surface $X$ over a smooth 
affine curve $B$ extends to  a $\PP^1$-fibration $\tilde\pi:\tilde X\to \bar B$ on 
a complete surface $\tilde X$ over a smooth completion $\bar B$ of $B$ such that 
$D=\tilde X\setminus X$ is a simple normal crossing divisor carrying no singular point of $\tilde X$. 
Let
$\rho:\bar X\to\tilde X$ be the minimal resolution of singularities (all of these singularities are located in $X$). 
Abusing notation, we consider $D$ as a divisor in $\bar X$. 
We call $(\bar X,D)$ a {\em resolved completion} of 
$X$. 

Consider the induced $\PP^1$-fibration 
$\bar\pi:=\tilde\pi\circ\rho:\bar X\to \bar B$. There is a unique ({\em horizontal}) component 
$S$ of $D$ which is a section of $\bar\pi$, while all the other ({\em vertical}) components of 
$D$ are fiber components. Let $\bar B\setminus B=\{c_1,\ldots,c_s\}$. Contracting subsequently 
the $(-1)$-components of $D$ different from $S$ we may assume in addition that  $D$ does not contain any  $(-1)$-component of a fiber.
Such a resolved completion $(\bar X,D)$ is called {\em pseudominimal}. Notice that the trivializing completions used regularly in the sequel (see Definition \ref{def: special-ext}) are not necessarily pseudominimal. 
\edefi

\begin{defi}[\emph {Extended divisor}] \label{def: ext-graph}
Let  $(\bar X,D)$ be a resolved completion 
of $X$ along with the associate $\PP^1$-fibration $\bar\pi\colon\bar X\to\bar B$, and
let $b_1,\ldots,b_n$ be the points of $B$ such that the fibers $\bar\pi^*(b_i)$ over $b_i$ 
in $\bar X$ are degenerate, i.e., are either non-reduced or reducible. 
The reduced divisor 
\be\label{eq: ext-div} D_{\rm ext}=D\cup  \Lambda\quad\mbox{where}
\quad \Lambda=
\bigcup_{j=1}^n\bar \pi^{-1}(b_j)\,\ee 
is called the {\em extended divisor} of $(\bar X,D)$, and the weighted dual graph 
$\Gamma_{\rm ext}$ of $D_{\rm ext}$ the {\em extended graph} of  
$(\bar X,D)$. We say that $\Gamma_{\rm ext}$ is {\em pseudominimal} if the completion $(\bar X,D)$ is. The graph $\Gamma_{\rm ext}$ is a rooted tree with the horizontal section $S\subset D$ as a root. The dual graph $\Gamma(D)$ of the boundary divisor $D$ is a rooted subtree of $\Gamma_{\rm ext}$. 

For a subgraph $\Gamma'$ of a graph $\Gamma$ we let $\Gamma\ominus\Gamma'$ denote the graph obtained from $\Gamma$ by deleting the vertices of $\Gamma'$ along with all their incident edges of $\Gamma$. The connected components of  $\Gamma_{\rm ext}\ominus \Gamma(D)$ are called the
{\em feathers} of $D_{\rm ext}$.  Under the pseudominimality assumption
all the $(-1)$-components of $\Lambda$ are among the feather components. 
\end{defi}

\bdefi[\emph{Standard completion}]\la{rem: unique-ext-div} Consider a pseudominimal resolved completion $\bar\pi\colon\bar X\to\bar B$. The fibers $\bar\pi^{-1}(c_i)$ where $c_i\in\bar B\setminus B$,  $i=1,\ldots,s$  are reduced and irreducible $0$-curves. Performing, if necessary, elementary transformations in one of them we may assume that also the section $S$ is a $0$-curve. Such a completion will be called \emph{standard}, cf., e.g., \cite[5.11]{FKZ-uniqueness}. 
By \cite[Lem.\ 5.12]{FKZ-uniqueness}, 
if two $\A^1$-fibrations $\pi\colon X\to B$ and $\pi'\colon X'\to B$ are isomorphic over $B$ then the corresponding  standard extended divisors $D_{\rm ext}$ and $D'_{\rm ext}$ and the corresponding (unweighted) extended graphs  $\Gamma_{\rm ext} $  and  $\Gamma'_{\rm ext} $ are. 
\edefi

\brem[\emph{Fiber structure}]\la{rem: affine-fibers}
Recall  (see \cite[Ch.\ 3, Lem.\ 1.4.1 and 1.4.4]{Mi})
that any degenerate 
fiber of $\pi\colon X\to B$ is a disjoint union of components 
isomorphic to $\A^1$, any singular point of $X$ is a cyclic quotient singularity, and two such 
singular points cannot belong to the same component. The minimal resolution of a singular 
point has as exceptional divisor  in $\bar X$
a chain of rational curves without $(-1)$-component and with a negative definite intersection form. This chain meets just one other fiber component 
at a terminal component of the chain.
\erem

\bdefi[\emph {Bridges}]\la{def: bridges}  Any feather $\fF$ of $D_{\rm ext}$  (see \ref{def: ext-graph}) is a chain of smooth
rational curves on $\tilde X$ with dual graph
$$
\Gamma(\fF):\qquad \co{F_0}\lin \co{F_{1}}\lin\ldots\lin
\co{F_{ k}}\quad .
$$
The subchain $\fR=\fF\ominus F_0=F_1+\ldots+ F_k$ (if non-empty)
contracts to a cyclic quotient singularity of $X$. The component $F_0$ called the {\em bridge}
of $\fF$ is
attached to a unique component $C$ of $D$. 
The
bridge $F_0$ is the closure in $\bar X$ of a fiber component $F_0\setminus C\cong\A^1$ of $\pi$. 
Vice versa, for each fiber component $F$ of  $\pi$ the closure $\bar F\subset\bar X$ of the proper transform of $F$ is 
a bridge of a unique feather. In the case of a smooth surface $X$ one has
$k=0$, i.e., any feather $\fF$ consists in a bridge: $\fF=F_0$.
\edefi

\subsection{Blowup construction}\label{ss: blowup constructions}

\bdefi[\emph{Blowup construction}]\label{def: simbl} Let   $\pi\colon  X\to B$ be an $\A^1$-fibration  on a normal affine surface $X$ over a smooth affine curve $B$, and let  $(\bar X,D)$ be a resolved completion 
of $X$ along with the associate $\PP^1$-fibration $\bar\pi\colon\bar X\to\bar B$ and with a section `at infinity' $S$. In any degenerate fiber $\bar\pi^*(b_i)$ on $\bar X$, $i=1,\ldots,n$, there is 
a unique component, say, $C_i$ meeting  $S$.  The next fact
 is well known; for the reader's convenience we provide a brief argument.  

\blem\la{lem: contracting-in-fiber} Let $C_0$ be the component of a reducible fiber $\bar\pi^{-1}(b)$, $b\in B$, such that $C_0\cdot S=1$. Then
the rest of
the fiber $\bar\pi^{-1}(b)\ominus C_0$ can be blown down to a
smooth point. \elem

\bproof Since $S\cdot\bar\pi^*(b)=S\cdot C_0=1$, $C_0$ has multiplicity $1$ in the fiber. We proceed by induction on the number $N$ of components in the fiber $\bar\pi^{-1}(b)$.
The statement is clearly true for $N=1$. Suppose now that $N>1$. Then there exists a $(-1)$-component $E$ in the fiber. If $E\neq C_0$ then contracting $E$ one can use the induction hypothesis. Assume now that $C_0$ is the only $(-1)$-component of $\bar\pi^{-1}(b)$. Since $C_0$ has multiplicity 1 it can be contracted to a smooth point of the resulting fiber sitting on a component, say, $C_1$ of multiplicity 1. By the induction hypothesis after blowing down $C_0$ the rest of the resulting fiber but $C_1$ can be blown down. Thus there is a $(-1)$-component of the fiber  $\bar\pi^{-1}(b)$ disjoint from $C_0$. However, the latter  contradicts our assumption that $C_0$ is a unique $(-1)$-component of the fiber $\bar\pi^{-1}(b)$.
\eproof

Performing such a contraction for every $i=1,\ldots,n$
 one arrives at a geometrically minimal ruling (that is, a locally trivial
$\PP^1$-fibration) $\bar\pi_0\colon\bar X_0 \to \bar B$. 
The image $S_0\subset\bar X_0$ of $S$ 
is a section
of $\bar\pi_0$. Thus $\bar X$ can be obtained starting with
a geometrically ruled surface $\bar X_0$ via a sequence of blowups  of points
\be\la{eq: blowupseq} 
\bar X = \bar X_m \stackrel{\bar\rho_m}{\longrightarrow} \bar X_{m-1} 
\longrightarrow  
\ldots \longrightarrow  \bar X_1 \stackrel{\bar\rho_1}{\longrightarrow} 
\bar X_0\,
\ee 
with centers contained in
$\bar\pi_0^{-1}(b_i)\setminus S_0\subset\bar X_0$, $i=1,\ldots,n$, and at infinitely
near points. For $j=0,\ldots,m$ we let $\bar\pi_j\colon \bar X_j\to\bar B$ be the induced $\PP^1$-fibrations.
\edefi

\bdefi[\emph {Well ordered blowup construction}]\label{def: well-ordered} 
In the rooted tree  $\Gamma_{\rm ext}$ with a root $S$, the $(-1)$-vertices on the maximal distance from $S$ are disjoint from $S$ and mutually disjoint due to Lemma \ref{lem: contracting-in-fiber}. 
Hence the corresponding fiber components can be simultaneously contracted. 
Repeating this procedure
one arrives finally at a smooth geometrically ruled surface 
$\bar\pi_0\colon\bar X_0\to \bar B$ along with a specific sequence \eqref{eq: blowupseq} of blowups
where every $\bar\rho_j$,
$j=1,\ldots,n$,  is a blowup with  center in a reduced zero dimensional subscheme of $\bar X_{j-1}\setminus (\bar\pi_{j-1}^{-1}(\bar B\setminus B)\cup S_{j-1})$ where $S_j$ is the proper transform on $\bar X_j$ of $S_0\subset\bar X_0$.  We call such a sequence \eqref{eq: blowupseq} a
{\em well ordered blowup construction}. 
\edefi

The following lemma is a generalization of Theorem 2.1  in \cite{Fi}.

\blem\label{lem: Sumihiro} Let $\pi\colon X\to B$ be an $\A^1$-fibered GDF $G$-surface where $G$ is a finite group. 
Then there is a $G$-equivariant resolved completion $(\bar X, D)$ 
of $X$ obtained via  a  $G$-equivariant well ordered blowup construction
\eqref{eq: blowupseq}.
\elem

\bproof By Sumihiro Theorem (\cite[Thm.\ 3]{Su}) there exists 
a $G$-equivariant projective completion 
$(\tilde X,\tilde  D)$ of $X$. The minimal resolution of singularities of the pair $(\tilde X, \tilde D)$ is  $G$-equivariant (being unique). In this way we arrive at a $G$-equivariant smooth projective completion 
$(\bar X, D)$ of $X$ by a $G$-invariant simple normal crossing  divisor $D$.
The closures in $\bar X$ of the fibers of 
$\pi\colon X\to B$ form a (nonlinear) $G$-invariant
pencil. Its base points also admit a $G$-equivariant resolution. 
Hence we may assume that $\bar X$ comes
equipped with 
a  $G$-equivariant $\PP^1$-fibration $\bar\pi\colon\bar X\to\bar B$ 
along with a $G$-invariant section 
$S$ of $\bar\pi$. 

In particular, the root $S$ of the extended graph $\Gamma_{\rm ext}$ of $(\bar X, D)$ 
is fixed by the induced $G$-action on $\Gamma_{\rm ext}$. This action stabilizes as well the set 
of $(-1)$-vertices on the maximal distance from $S$. Therefore, the simultaneous contraction 
of the corresponding fiber components is $G$-equivariant. By recursion
one arrives at a $G$-equivariant well ordered blowup construction.
\eproof

\brems\label{rem: D} 1. Under a well ordered blowup construction (\ref{eq: blowupseq}) no blowup
is done near the section $S_0$ of $\bar\pi_0$ neither with center over the points $c_i\in \bar B\setminus B$, 
$i=1,\ldots,k$. So, the  fibers  in $\bar X_i$ over these points 
remain reduced and irreducible. 

2. Let  a component $F$ of $D_{\rm ext}$ different from $S$
be created by one of the blowups $\bar\rho_\nu\colon\tilde X_{\nu}\to \tilde X_{\nu-1}$ in \eqref{eq: blowupseq}. We  claim that then
the center $P_\nu$ of the blowup 
$\bar\rho_\nu$ belongs to the image of $D$ in $\bar  X_{\nu-1}$. Indeed, otherwise the last 
$(-1)$-curve, say, $E$ over $P_\nu$ would neither be a bridge of a feather, nor a component of $D$. 
Hence $E$ should be a component of a feather, say, $\fF$, different from the bridge 
component $F_0$. However, the latter contradicts the minimality of $\fF\ominus F_0$, that is, the minimality 
of the resolution of singularities of $X$. 
\erems

Recall the following notions. 

\bsit \label{inner-outer} Let $D$ be a simple normal crossing divisor on a smooth surface $Y$. 
A blowup of $Y$ at  a point $p\in D$ is called {\em outer} if $p$ is a smooth point of $D$ 
and {\em inner} if $p$ is  a node.
\esit 

We use the following notation.

\bnota\label{not: lambda-delta}
Given a blowup construction \eqref{eq: blowupseq} for any $j=0,\ldots,m$ we let $S_j$ be the proper transform on $\bar X_j$ of $S_0\subset\bar X_0$ and
\be\label{eq: lambda-delta} D_{j,\rm ext}=S_j\cup \Delta_{j}
\cup  \Lambda_j\subset \bar X_j\quad\mbox{where}
\quad \Delta_{j}=\bigcup_{i=1}^k\bar \pi_j^{-1}(c_i)\,\,\,
\mbox{and}\,\,\,  \Lambda_j=
\bigcup_{i=1}^n\bar \pi_j^{-1}(b_i)\,.\ee 
\enota

The following lemma should be well known. For (b) see, e.g., \cite[(2.2)]{Du0}  and the proof of Proposition 6.3.23 in \cite{FKZ0}. Recall that a \emph{leaf} of a rooted tree is an extremal vertex different from the root. 

\blem\label{lem: standard GDF-procedure} Let $\pi \colon X\to B$ be a
normal affine $\A^1$-fibered  surface over a smooth affine curve $B$. Consider a resolved completion $(\bar X=\bar X_m,D)$ of $X$ obtained via a well ordered
blowup construction  \eqref{eq: blowupseq} starting with a ruled surface
 $ \bar\pi_0\colon  \bar X_0\to \bar B$. 
Then the following hold. 
\begin{itemize}\item[(a)]
$\pi \colon X\to B$  is a GDF surface if and only if all the blowups $\bar\rho_\nu$  in {\rm (\ref{eq: blowupseq})}, 
$\nu=1,\ldots,m$, 
are outer {\rm (}with respect to
the divisor $D_{0,\rm ext}$ on $ \bar X_0$ and its subsequent total transforms 
$D_{\nu,\rm ext}$ on $ \bar X_\nu${\rm )}. 
\item[(b)] If $\pi \colon X\to B$ is a GDF surface then $X$ is smooth and 
every feather $\fF$ of 
$D_{\rm ext}=D_{m,\rm ext}$ consists in a single $(-1)$-component  $F_0$ which is a bridge. 
\item[(c)] Let $\pi\colon X\to B$ be a GDF surface  with a pseudominimal resolved completion $(\bar X,D)$, see Definition {\rm\ref{resi}}. For a fiber component $F$ of $\pi$ the following are equivalent:
\begin{itemize}\item[$\bullet$] $\bar F$ is a leaf of the rooted tree $\Gamma_{\rm ext}$; \item[$\bullet$]  $\bar F$ is a feather; \item[$\bullet$] $\bar F$ is a $(-1)$-vertex of $\Gamma_{\rm ext}$. \end{itemize}
\end{itemize}\elem

\bproof 
Suppose that for some $\nu\in \{1,\ldots,m\}$ the blowup $\bar\rho_\nu$ is inner. 
Assume also that the center $P_\nu \in\bar X_{\nu-1}$ of 
$\bar\rho_\nu$ lies on the fiber over $b_i\in B$ and on the image $D_{\nu-1,\rm ext}$ of 
$D_{\rm ext}$. Then all the components of the fiber $ \bar\pi^*(b_i)$ which are born over 
$P_\nu$ including the last $(-1)$-component, say, $\bar F$, have multiplicities $>1$. 
Notice that $\bar F=\bar F_0$
 is a bridge component of a feather, say, $\fF$. Hence $\bar F$ is the closure in $\bar X$ of a component $F$ of the  fiber 
$\pi^*(b_i)\subset X$. Thus, the fiber $\pi^*(b_i)$ is not reduced. This contradiction shows that for 
a GDF surface $\pi\colon X\to B$ 
all the $\bar\rho_\nu$, $\nu=1,\ldots,m$, are outer. 

To show the converse suppose that all the   $\bar\rho_\nu$  
in \eqref{eq: blowupseq}, $\nu=1,\ldots,m$
are outer. Then 
all the resulting degenerate fibers are reduced. Hence $\pi\colon X\to B$ 
is a GDF surface. This proves (a).

Assume further that a feather $\fF$ of $D_{\rm ext}$ has more than one component. 
The component of $\fF$ which  
appears the last in the blowup construction \eqref{eq: blowupseq} is
the bridge component $F_0$ of $\fF$. 
Hence $F_0$ results from a blowup $\bar\rho_\nu$ with center $P_\nu$ which 
 lies on the component $\bar F_1$ of $\fF$ and
on the image in 
$\bar X_{\nu-1}$ of a component $C$ of $D$,
see Remark \ref{rem: D}. 
Thus, $P_\nu$ is a 
nodal point of the divisor $D_{\nu-1,\rm ext}$ on $\bar X_{\nu-1}$. 
It follows that $\bar\rho_\nu$ 
is inner. So, the bridge $\bar F_0$  of $\fF$ has multiplicity $>1$ 
in its fiber. 

This proves 
that for a GDF surface $\pi\colon X\to B$ every feather $\fF$ of 
$D_{\rm ext}$ consists in a single bridge
component  $\bar F_0$. Consequently, the surface $X$ is smooth. 
Furthermore, assuming that $\bar F_0^2<-1$ an outer 
blowup was done in 
\eqref{eq: blowupseq} with center on $F_0$ creating a new component, say, $E$ of $D$.
The graph distance dist$(E, S)$ in $\Gamma_{\rm ext}$ is bigger than dist$(\bar F_0, E)$. Hence 
$\bar F_0$ disconnects $S$ and $E$ in $D$. The latter contradicts the facts 
that the affine surface $X$ is connected
at infinity, i.e., its boundary divisor $D$ is connected. Therefore,  $\bar F_0^2=-1$. This shows (b). 

The same argument proves 
that $\bar F_0$ is an extremal vertex (a \emph{tip}) of   $\Gamma_{\rm ext}$  different from $S$. Conversely, if $\bar F$ is a tip  of
$\Gamma_{\rm ext}$ different from $S$ then $\bar F^2=-1$. Indeed, since all the blowups 
in \eqref{eq: blowupseq} are outer then no further blowup was done 
near $\bar F$ after creating $\bar F$. Due to the pseudominimality assumption,  $\bar F$ is a feather of $D_{\rm ext}$. Now (c) follows.  
\eproof

\bdefi[\emph{Fiber trees, levels, and types}]\label{def: fiber tree}  Given an SNC 
completion $\bar\pi\colon\bar X\to\bar B$ of a GDF surface $\pi \colon X\to B$ and a point $b\in B$  the dual graph  $\Gamma_b=\Gamma_b(\bar\pi)$ of the fiber $\bar\pi^{-1}(b)$ will be called a  \emph{fiber tree}.  It depends on the completion chosen. 
This is a rooted tree with a root $v_0\in\Gamma_b$ being the  neighbor of $S$ in $\Gamma_{\rm ext}$. We say that a vertex $v$ of $\Gamma_b$ has \emph{level} $l$ if the tree distance between $v$ and $v_0$ equals $l$. Thus, the root $v_0$ is a unique vertex of $\Gamma_b$ on level $0$.
By a \emph{height} $\h(\Gamma_b)$ we mean the highest level of the vertices in $\Gamma_b$. Remind that the  \emph{leaves} of a rooted tree are its extremal vertices different from the root. By the \emph{type} $\tp(\Gamma_b)$ we mean the sequence of nonnegative integers $(n_1,n_2,\ldots, n_h)$ where $h=\h(\Gamma_b)$ and  $n_i$ is the number of leaves of $\Gamma_b$ on level $i$. \edefi

\brem\label{rem: can-characterization} 
The fiber tree $\Gamma_b$ is an unweighted tree. However, one can easily reconstruct  the weights. Namely, for a vertex $v$ of weight $w(v)$ and of degree $\deg(v)$ in $\Gamma_b$ one has $w(v)=-\deg(v)$. In particular, the $(-1)$-vertices are the tips, and the $(-2)$-vertices are the linear ones.
\erem

\bdefi[\emph{Graph divisor}]\la{def: graph-divisor} Let $\fG$ be the set of all finite weighted rooted trees contractible to the root which acquires then weight zero. By a \emph{graph divisor} on a smooth affine curve $B$ we mean a formal sum
$$ \cD=\sum_{i=1}^n \Gamma_i b_i,\quad \mbox{where}\quad \Gamma_i\in \fG\,.
$$ If all the $\Gamma_i$ are chains then we call $\cD$ a \emph{chain divisor}.
The \emph{height} of a graph divisor $\cD$ is the maximal height of the trees $\Gamma_i$, $i=1,\ldots,n$.

Let $\pi\colon X\to B$ be an $\A^1$-fibered surface with a marking $z\in\cO_B(B)$ where $z^*(0)=b_1+\ldots+b_n$ and with a resolved completion $\bar\pi\colon\bar X\to\bar B$. To 
the corresponding extended graph $\Gamma_{\rm ext}$ we associate its \emph{graph divisor} $\cD(\pi)=\sum_{i=1}^n \Gamma_{b_i} b_i$ where $\Gamma_{b_i}$ is the fiber tree of the fiber $\pi^{-1}(b_i)$. If $\pi\colon X\to B$ is a $\mu_d$-surface and the marking $z$ is $\mu_d$-quasi-invariant then there is an induced $\mu_d$-action on the graph divisor $\cD(\pi)$. \edefi

\subsection{GDF surfaces via affine modifications}\label{ss: aff-modif}

Let $X\to B$ be a GDF surface.  In this subsection we describe
a recursive procedure which allows to recover $X$ starting with the product 
$B\times\A^1$ via a sequence of fibered modifications, see Corollary \ref{cor: final decomposition}.

\bdefi[\emph{Fibered modification}]\footnote{Cf.\ \cite[Def.\ 4.2]{Du0}.}  \label{def: simple-aff-modif} A {\em fibered modification} between two $\A^1$-fibered GDF 
surfaces $\pi\colon X\to B$ and $\pi'\colon X'\to B$
is an affine modification $\varrho\colon X'\to X$ 
which consists in blowing up a reduced 
zero-dimensional subscheme of $X$ and deleting the proper transform of the union of those 
fiber components of $\pi$ which carry centers of blowups.
Thus, $\rho$ is a birational morphism of $B$-schemes.
\edefi

\brem\label{rem: non-complete-intersection} 
Let $F$ be a reduced curve on a smooth affine surface $X$, let $\Sigma\subset F$ be a reduced zero dimensional subscheme, and let $\sigma\colon X'\to X$ be the composition of  blowing up $X$ with the center $\Sigma$ and deleting the proper transform  $F'$ of $F$.  We claim that 
$X'$ is again affine, and so, by \cite[Thm.\ 1.1]{KZ}, the  birational morphism $X'\to X$ is an 
affine modification. 

Indeed, there exists a completion $\bar X$ of $X$ and  
an ample divisor $A$ on $\bar X$ with support ${\rm supp} \,A=\bar X\setminus X$. 
Let $\bar X'$ be the surface obtained from $\bar X$ by blowing up with center $\Sigma$. 
Consider on $\bar X'$ the proper transforms $A'$ and $\bar F'$ 
of $A$ and $\bar F$, respectively, where $\bar F$ is the closure of $F$ in $\bar X$. By Kleiman ampleness criterion the divisor $nA'+\bar F'$ on $\bar X'$ is ample provided 
that $n$ is sufficiently large. Hence the surface $X'=\bar X'\setminus {\rm supp}\,(nA'+\bar F')$ 
is affine, as claimed. 

In general, $F$ is not a principal divisor on 
$X$. To represent $\sigma\colon X'\to X$ via an affine modification  along a principal divisor
let us choose functions $f,g\in\cO_X(X)$  such that $f$ vanishes on $F$ to order 1 and the restriction $g|_F$  vanishes with order 1 on $\Sigma$. Let $I\subset\mathcal{O}_X(X)$  be  
the ideal generated by $f,g$, and by the regular functions on $X$ vanishing on $ \Sigma\cup (\mathbb{V}(f)\setminus F)$. Then $\sigma\colon X'\to X$ is the affine modification along the divisor $f^*(0)$ with the center $I$. 

Let $\pi\colon X\to B$ be a GDF surface, $F$ be a fiber component of $\pi$, and let $f=\pi^*z$ where $z\in\cO_B(B)$ has a simple zero at the point $\pi(F)\in B$. Then $\pi'=\pi\circ\sigma\colon X'\to B$ is again a GDF surface and $\sigma\colon X'\to X$ is a fibered modification. This justifies Definition \ref{def: simple-aff-modif}.
\erem

For a GDF surface
 $\pi\colon X\to B$ one has the following decomposition. 

\bprop\label{prop: decomposition into aff modif}  \begin{itemize}\item[(a)]
Any  GDF surface
 $\pi\colon X\to B$ can be obtained starting with a line bundle $\pi_0\colon X_0\to B$ 
over $B$ via a sequence of  fibered modifications
\be\la{eq: seq-aff-modif} 
X =  X_m \stackrel{\varrho_m}{\longrightarrow} X_{m-1} \stackrel{}{\longrightarrow}  
\ldots \stackrel{}{\longrightarrow}   X_1 \stackrel{\varrho_1}{\longrightarrow} 
X_0\,.
\ee 
This sequence can be extended to the corresponding  completions yielding a well ordered blowup sequence \eqref{eq: blowupseq}. 

\item[(b)] Suppose, furthermore, that $\pi\colon X\to B$ is a  GDF $G$-surface where $G$ is a finite group.
Then 
\eqref{eq: seq-aff-modif} can be chosen so that the intermediate surfaces $X_\nu$ 
come equipped with $G$-actions making the morphisms 
$\varrho_{\nu+1}\colon X_{\nu+1} \to X_{\nu}$ and 
 $\pi_\nu\colon X_\nu\to B$ $G$-equivariant for all  $\nu=0,\ldots,m-1$.
\end{itemize}
\eprop

\bproof (a) To construct 
\eqref{eq: seq-aff-modif} we exploit a well ordered blowup construction  
\eqref{eq: blowupseq} which starts with a $\PP^1$-bundle $\bar\pi_0\colon \bar X_0\to\bar B$
and finishes with a pseudominimal completion $\bar\pi_m\colon \bar X_m\to\bar B$ of $\pi\colon X\to B$.  

For any $\nu=0,\ldots,m$ we let $D_{\nu,\rm ext}$ ($\Delta_\nu$, $S_\mu$, respectively) be the image on  $\bar X_\nu$ of the extended divisor $D_{\rm ext}=D_{m,\rm ext}$ (the divisor $\Delta=\Delta_m$, the section $S=S_m$,  respectively) on $ \bar X_m=\bar X$. Let $\Gamma_{\nu,\rm ext}$ be the weighted dual graph of $D_{\nu,\rm ext}$ and $\Lambda_{\nu,\,\max}$ be the union of the fiber components of $\bar\pi_\nu\colon\bar X_\nu\to\bar B$ which correspond to the extremal vertices of $\Gamma_{\nu,\rm ext}$ 
on maximal distance from $S_\nu$. Let also $D_\nu$ be the union of the remaining components of $D_{\nu,\rm ext}$. Then $\Lambda_{\nu+1,\rm max}$ is the exceptional divisor of the blowup $\bar\rho_\nu\colon\bar X_{\nu+1}\to\bar X_\nu$ with center on 
 $\Lambda_{\nu,\,\max}\setminus D_\nu$. 

Consider the open surface 
$X_\nu=\bar X_\nu\setminus D_{\nu}$. We claim that $X_\nu$ is affine and $\bar\rho_\nu(X_{\nu+1})\subset X_\nu$. Indeed, the latter
 follows since  $\bar\rho_\nu(\Lambda_{\nu+1,\rm max}\setminus D_{\nu+1})\subset \Lambda_{\nu,\,\max}\setminus D_\nu\subset X_\nu$ due to  the  above observation. 
To prove the former we use the Kleiman ampleness criterion (cf. Remark \ref{rem: non-complete-intersection}). Notice that a fiber component $F$ of $D_{\rm ext}$ has level $l$ if dist$(F,S)=l+1$ in $\Gamma_{\rm ext}$. For any $\nu\ge l$ the proper transform of $F$  in $\bar X_\nu$ has the same level $l$.  Choose a sequence of positive integers $$s_0\gg a_0\gg a_1\gg\ldots\gg a_{m-1}\gg 0\,.$$ Let $A_\nu$ be an effective divisor on $\bar X_\nu$ with support $D_\nu$ such that $a_l$ is the multiplicity  in $A_\nu$ of any fiber component $F$ of $D_\nu$ of level $l$, $l=0,\ldots,\nu-1$, and $s_0$ is the multiplicity of $S_\nu$ in $A_\nu$.  Performing elementary transformations in a fiber over a point $c_i\in\bar B\setminus B$ we may assume that $S_\nu^2>0$. Suppose to the contrary that there is an irreducible curve $C$ in $\bar X_\nu$ with $C\cdot A_\nu= 0$. If $C$ is not a component of $D_{\nu,\rm ext}$ then $\bar \pi(C)\subset B$, hence $\bar \pi|_C=$cst which gives a contradiction. If $C=S_\nu$ then clearly $C\cdot A_\nu>0$ which contradicts our choice of $C$ and $A_\nu$. The same contradiction occurs if $C$ is a fiber component of $D_{\nu,\rm ext}$.  Due to the Kleiman ampleness criterion the divisor $A_\nu$ with support $D_\nu$ is ample. Thus the surface $X_\nu=\bar  X_\nu\setminus D_{\nu}$ is affine.

Letting now $\rho_{\nu+1}=\bar\rho_{\nu+1}|_{X_{\nu+1}}\colon X_{\nu+1}\to X_{\nu}$, $\nu=0,\ldots,m-1$ one obtains a desired sequence \eqref{eq: seq-aff-modif} 
of fibered modifications. 
This proves (a).

To show (b) it suffices to start with a $G$-equivariant version of sequence \eqref{eq: blowupseq} constructed in the proof of Lemma \ref{lem: Sumihiro}(b). By our construction, $\bar\rho_{\nu}$ is $G$-equivariant and $D_{\nu,\rm ext}$, $D_{\nu}$, and $X_\nu$ are $G$-invariant. Hence $\rho_{\nu+1}=\bar\rho_{\nu+1}|_{X_{\nu+1}}$ is $G$-equivariant too for any $\nu=0,\ldots,m-1$.
\eproof

The following proposition is an affine analog of the Nagata-Maruyama Theorem on the projective ruled surfaces (\cite{NM}; see also \cite{La}). It allows to replace the line bundle $X_0\to B$ in (\ref{eq: seq-aff-modif})
by the trivial bundle 
$B\times \A^1\to B$. For the  corresponding completions this amounts to a stretching which extends feathers 
by chains of type $[[-1,-2,\ldots,-2]]$ in $D$ near $S$, so loosing the pseudominimality property.

\blem\label{lem: line-bdl-from-direct-prod} Let  $ X$ be 
the total space of a line bundle $\pi\colon X\to B$  
over a smooth affine curve $B$. 
Then the following hold.
 \begin{itemize}\item[(a)] $X$ can be obtained starting with the product 
$B\times\A^1$ over $B$
via a sequence of fibered modifications over $B$,
\be\label{eq: decomposition-2}
X =  Z_M \stackrel{\rho_M}{\longrightarrow} Z_{M-1} \stackrel{}{\longrightarrow}  
\ldots \stackrel{}{\longrightarrow}   Z_1 \stackrel{\rho_1}{\longrightarrow} 
Z_0=B\times\A^1\,
\ee 
where $\pi_i\colon Z_i\to B$ is the projection of a line bundles and the center of $\rho_i$ belongs to the exceptional divisor of $\rho_{i-1}$  for each $i=0,\ldots,M$.
\item[(b)]  If in addition $\pi\colon X\to B$ is a marked GDF $\mu_d$-surface 
then for $i=0,\ldots,M$ the morphisms $\rho_i\colon Z_i\to Z_{i-1}$ in {\rm (\ref{eq: decomposition-2})} and $\pi_i\colon Z_i\to B$ 
can be chosen to be $\mu_d$-equivariant with respect to suitable 
$\mu_d$-actions on the surfaces $Z_i$ and the given $\mu_d$-action on $B$.
\end{itemize}\elem

\bproof (a) Under our assumptions $X$ is affine and admits an effective 
$\mathbb{G}_m$-action along the fibers of $\pi$. This action induces a grading 
$\mathcal{O}_{X}(X)=\bigoplus_{i\ge 0} \mathfrak{A}_i$ where $\mathfrak{A}_0=\mathcal{O}_{B}(B)$ and $\mathfrak{A}_1\neq\{0\}$. 
If $u\in \mathfrak{A}_1$ then the restriction of $u$ to a general fiber of $\pi$ yields a coordinate
on this fiber. It follows that $\psi=({\rm id_B}, u)\colon X\to B\times\A^1$
is a birational 
morphism over $B$, hence an affine modification 
(see \cite[Thm.\ 1.1]{KZ}). 
Since $\psi$ is $\mathbb{G}_m$-equivariant its exceptional divisor $E$, its center $C$, and its divisor $D$ are 
$\mathbb{G}_m$-invariant. Since $u$ is a $\mathbb{G}_m$-quasi-invariant of weight 1 it vanishes along the zero section $Z\subset X$ with order 1. Thus, one has $u^{-1}(0)= Z\cup F_1\cup\ldots\cup F_n$ where $F_i=\pi^{-1}(b_i)$, $b_i\in B$, $i=1,\ldots,n$. Then $$E=F_1\cup\ldots\cup F_n,\quad C=\{b_1,\ldots,b_n\}\times\{0\},\quad \mbox{and}\quad D=\{b_1,\ldots,b_n\}\times\A^1\subset B\times\A^1\,.$$ 
So, $\psi$ consists in blowing up a 
subscheme with support $C$
and deleting the proper transform of $D$. Therefore, $\psi$ factorizes through 
the $\mathbb{G}_m$-equivariant fibered modification 
$\rho_1\colon Z_1\to B\times\A^1$ which consists in  
 blowing up the reduced
subscheme $C$ and deleting the proper transform of $D$. 
 Similarly, the resulting birational morphism of line 
bundles $X\to Z_1$ over $B$ can be factorized over $B$ into $X\to Z_2\to Z_1$ where the center of $Z_2\to Z_1$ is a reduced subscheme of $C$. After a finite number of steps we get a $\mathbb{G}_m$-equivariant resolution of indeterminacies of the inverse birational map $\psi^{-1} \colon B\times\A^1\dashrightarrow X$, hence also
 a desired decomposition of $\psi$ into
a sequence (\ref{eq: decomposition-2}) of fibered modifications.

(b) Under the assumptions of (b) consider the induced
$\mu_d$-action on $Z_0=B\times\A^1$ identical on the second factor. 
In order that $\psi=({\rm id_B}, u)\colon X\to B\times\A^1$
were $\mu_d$-equivariant it suffices to choose $u\in \mathfrak{A}_1^{\mu_d}$ being a $\mu_d$-invariant.
Since $\mu_d$ acts via automorphisms of the line bundle $\pi\colon X\to B$ 
it normalizes the $\mathbb{G}_m$-action on $X$. Hence it induced a representation of 
$\mu_d$
via automorphisms of the graded $k$-algebra $\cO_X(X)=\bigoplus_{i\ge 0} \mathfrak{A}_i$. 
Let $$\mathfrak{A}_1^{(i)}=\{\mathfrak{a}\in \mathfrak{A}_1\,|\,\zeta.\mathfrak{a}=\zeta^i\mathfrak{a}\,\,\forall\zeta\in\mu_d\}\,.$$
Any element $\mathfrak{a}\in \mathfrak{A}_1$ belongs to the $\mu_d$-invariant subspace $E$ 
spanned by the orbit 
$\mu_d(\mathfrak{a})$. The finite dimensional representation of $\mu_d$ 
in $E$ splits into a sum of
one-dimensional representations. Consequently, $\mathfrak{a}$ 
can be written as a sum of $\mu_d$-quasi-invariants.
It follows that $\mathfrak{A}_1=\bigoplus_{i=0}^{d-1} \mathfrak{A}_1^{(i)}$.

We claim that there exists a nonzero invariant $u\in\mathfrak{A} _1^{(0)}=\mathfrak{A}_1^{\mu_d}$.
Indeed,  for some $i\in\{0,\ldots,d-1\}$ there exists a nonzero  $\mu_d$-quasi-invariant
$h\in \mathfrak{A}_1^{(i)}$ of weight $i$.
The marking  $z\in \mathfrak{A}_0=\pi^*(\mathcal{O}_B(B))$ is a 
$\mu_d$-quasi-invariant of weight 1,  see Definition \ref{def: affine A1}. Then 
$u=z^{d-i}h\in \mathfrak{A}_1^{\mu_d}$ as desired.

The resulting birational morphism  $\psi=({\rm id_B}, u)\colon X\to B\times\A^1$ over $B$
is $\mu_d$-equivariant. So, this is an affine modification with 
$\mu_d$-invariant center $C$ and divisor $D$. 

Hence $\psi$ factorizes through 
the $\mu_d$-equivariant fibered  modification 
$\rho_1\colon Z_1\to B\times\A^1$ which consists in  
 blowing up $B\times\A^1$ with center the  reduced zero dimensional subscheme $C\subset B\times\{0\}$ 
and deleting the proper transform of $D$. By recursion one arrives at a 
sequence (\ref{eq: decomposition-2}) of $\mu_d$-equivariant morphisms.
\eproof

\brem\label{rem:moving-support}  In the notation as in the proof of Lemma \ref{lem: line-bdl-from-direct-prod}(a) let $\div u=Z+\sum_{i=1}^n m_iF_i$. Then the effective divisor $\mathfrak{b}:=m_1b_1+\ldots+m_nb_n\in\Div(B)$ in this proof can be replaced by any representative $\mathfrak{b}'\in |\mathfrak{b}|$. Indeed, let $\mathfrak{b}'=\mathfrak{b}+\div f$ for a rational function $f$ on $B$. Then $u'=uf\in \mathfrak{A}_1\subset\cO_X(X)$ and $\div u'=Z+\pi^*(\mathfrak{b}')$. \erem

Letting in \eqref{eq: seq-aff-modif} $G=\mu_d$ and extending this sequence
on the right by those in
\eqref{eq: decomposition-2} with a suitable new enumeration we arrive at  our
final sequence of fibered modifications. 

\bcor\label{cor: final decomposition} \begin{itemize}\item[(a)]
Any 
GDF surface
 $\pi\colon X\to B$ can be obtained 
starting with a product $X_0=B\times\A^1$
via a sequence of  fibered modifications
\be\la{eq: seq-aff-modif-final} 
X =  X_m \stackrel{\varrho_m}{\longrightarrow} X_{m-1} \stackrel{}{\longrightarrow}  
\ldots \stackrel{}{\longrightarrow}   X_1 \stackrel{\varrho_1}{\longrightarrow} 
X_0=B\times\A^1\,
\ee 
such that the center of $\varrho_i$ is contained in the exceptional divisor of $\varrho_{i-1}$.  

\item[(b)] Suppose furthermore that  $\pi\colon X\to B$ is a marked GDF $\mu_d$-surface.
Then any intermediate surface $X_i$,   $i=0,\ldots,m-1$, 
comes equipped with the induced $\mu_d$-action so that the morphisms 
$\varrho_{i+1}\colon X_{i+1} \to X_{i}$ and 
 $\pi_i\colon X_i\to B$ are $\mu_d$-equivariant.
\end{itemize}
\ecor

\bproof
Let us prove (a) leaving the proof of (b) to the reader. Both \eqref{eq: seq-aff-modif} and \eqref{eq: decomposition-2} are chosen well ordered, that is, the center of $\varrho_i$ is contained in the exceptional divisor of $\varrho_{i-1}$. 
Let the centers of blowups in \eqref{eq: seq-aff-modif} and \eqref{eq: decomposition-2} are situated over  the points $b_1,\ldots,b_n\in B$ and $b_1',\ldots,b'_m\in B$, respectively. Let $\mathfrak{b}=b_1+\ldots+b_n\in\Div(B)$ and $\mathfrak{b}'=b_1'+\ldots+b_m'\in\Div(B)$. The linear system $|\mathfrak{b}'|$ is base point free. Hence $\mathfrak{b}'\in |\mathfrak{b}'|$ can be chosen so that $b_i\neq b_j'$ $\forall i,j$, see Remark \ref{rem:moving-support}. Then the blowups in \eqref{eq: seq-aff-modif} and \eqref{eq: decomposition-2} are independent (they commute, in a sense). So, we may perform the blowups in \eqref{eq: seq-aff-modif} and \eqref{eq: decomposition-2}  at each step simultaneously so that the resulting sequence \eqref{eq: seq-aff-modif-final} will be well ordered.
\eproof

\brems\label{rem: extended-to-completions} 1.
The morphisms in (\ref{eq: seq-aff-modif-final}) 
can be extended to suitable completions yielding a sequence of birational morphisms
\be\la{eq: seq-contractions} 
\hat X =  \hat X_m \stackrel{\hat \varrho_m}{\longrightarrow} 
\hat X_{m-1} \stackrel{}{\longrightarrow}  
\ldots \stackrel{}{\longrightarrow}   \hat X_1 \stackrel{\hat \varrho_1}{\longrightarrow} 
 \hat X_0=\bar B\times\PP^1\,,
\ee 
where $\hat \pi_i\colon\hat  X_i\to\bar B$ is a $\mu_d$-equivariant $\PP^1$-fibration extending 
$\pi_i\colon X_i\to B$ and $\hat \varrho_i\colon \hat X_i\to\hat X_{i-1}$ is 
a simultaneous contraction of a $\mu_d$-invariant union of disjoint $(-1)$-components of 
$\hat \pi_i$-fibers, $i=0,\ldots,m$. Inspecting the proof of Lemma 
\ref{lem: line-bdl-from-direct-prod} 
one can see that certain irreducible fibers of ${\rm pr}_1\colon
\bar B\times\PP^1\to\bar B$ are  
replaced by chains of rational curves with sequences of weights of type $[[-1,-2,\ldots,-2,-1]]$. 
This yields a completion $\hat X_m\to \bar B$ of $X_m\to B$ whose boundary 
$\hat X_m\setminus X_m$ is a simple normal crossings divisor. 
The 
section at infinity $\bar B\times\{\infty\}$ of ${\rm pr}_1\colon\bar B\times\PP^1\to\bar B$ 
gives rise to 
a section at infinity  $S$ of $\hat X=\hat X_m\to\bar B$ with $S^2=0$. 
If the line bundle $\bar X_0\to\bar B$ 
in (\ref{eq:  seq-aff-modif}) is nontrivial then the completion $(\hat X, \hat D)$ of $X$
 is not pseudominimal. 

2. It is easily seen that $F$ has level $l$ if and only if it appears for the first time on the surface 
$X_l$ ($l\le m$) in \eqref{eq: seq-aff-modif-final}, see Definition \ref{def: fiber tree}. Thus, any fiber 
component $F'$ of $\pi_l\colon X_l\to B$ has  level  $\le l$. If $\pi\colon X\to B$ is a marked GDF $\mu_d$-surface then the level function 
is $\mu_d$-invariant. Notice that the center of the blowup $\rho_{l+1}\colon X_{l+1}\to X_l$ in  (\ref{eq: seq-aff-modif-final}) is situated on the union of the top level $l$ fiber components in $X_l$.
\erems

\bdefi[\emph{Trivializing completions}]\la{def: special-ext} The completion $(\hat X, \hat D)$ of a GDF surface $X$ fitting in (\ref{eq: seq-contractions}) and the corresponding graph divisor $\cD(\hat\pi)$  
will be called \emph{trivializing}. 
\edefi

\section{Vector fields and natural coordinates}
\subsection{Locally nilpotent vertical  vector fields} 
\label{ss: invariant-vertical-vector-fields}

\blem\label{lem: vert-vect-field} Let $\pi\colon X\to B$ be 
a marked GDF $\mu_d$-surface with a marking $z\in\mathcal{O}_B(B)\setminus\{0\}$.
Then for any $l=0,\ldots,m$ the surface $X_l$ in {\rm (\ref{eq: seq-aff-modif-final})} admits a locally nilpotent  regular
$\mu_d$-quasi-invariant vertical 
vector field $\partial_l$  of weight $l$ 
 non-vanishing on the fiber 
components of the top level $l$ and vanishing on the fiber components of lower levels.\elem

\bproof 
 Consider the locally nilpotent vertical vector field $\partial_0=\partial/\partial u$ on $X_0=B\times\A^1$ where $\A^1={\rm spec}\,\mathbb{k}[u]$. 
Clearly, $\partial_0$ is invariant under the $\mu_d$-action on $B\times\A^1$
 identical on the second factor. 
The $\mu_d$-equivariant fibered modification $\rho_1\colon X_1\to B\times\A^1$ 
over $B$ 
transforms $\partial_0$ into a  $\mu_d$-invariant rational vertical
vector field on $X_1$
with pole of order 1 along the fiber components of level 1. By induction,
$\partial_0$ lifts to a $\mu_d$-invariant rational vertical vector field on $X_l$ 
with pole of order $s$
on any fiber component of level $s$ where $1\le s\le l$ and no other pole. 

Since the  marking $z\in\mathcal{O}_B(B)\setminus \{0\}$ is  $\mu_d$-quasi-invariant of weight 1
then $\partial_l:=z^l\partial/\partial u$ generates a regular locally nilpotent $\mu_d$-quasi-invariant 
vertical vector field on $X_l$ of weight $l$ 
non-vanishing on the fiber components of level $l$  and vanishing on the fiber components of smaller levels.
 \eproof

\subsection{Standard affine charts}
\bnota\la{not: Bi}
Let $\pi\colon X\to B$ be a marked GDF $\mu_d$-surface with a marking $z\in\mathcal{O}_B(B)$ where $z^*(0)=b_1+\ldots+b_n$. For any $i=1,\ldots,n$ consider in $B$ the affine chart $B_i=B\setminus \{b_1,\ldots,b_{i-1},b_{i+1},\ldots,b_n\}$ about the point $b_i$. So, $\div\,(z|_{B_i})=b_i$. 

Given a surface $X_l$ from \eqref{eq: seq-aff-modif-final} we let $F_{i,1},\ldots, F_{i,n_i}$ be the components of the fiber $\pi_l^{-1}(b_i)$. Consider the $\mathbb{G}_a$-action $H_l$ on $X_l$ along the fibers of $\pi_l$ generated by the locally nilpotent vector field $\partial_l$ as in Lemma \ref{lem: vert-vect-field}. 
\enota

\bprop\label{prop: aff-chart} In the notation as above the following hold.
\begin{itemize} \item[$\bullet$] For any $i\in\{1,\ldots,n\}$ and $j\in\{1,\ldots,n_i\}$ there is a unique {\rm standard} affine chart $U_{i,j}\supset F_{i,j}$ in $X_l$ such that $U_{i,j}\cong_{B_i} B_i\times\A^1$. These standard affine charts $(U_{i,j})_{i,j}$ form a covering of $X_l$;
 \item[$\bullet$]  one has $U_{i,j}\cap U_{i,j'}=U_{i,j}\setminus F_{i,j}=U_{i,j'}\setminus F_{i,j'}$ for any $1\le j,j'\le n_i$;  
\item[$\bullet$]  the $\mu_d$-action on $X_l$ induces a $\mu_d$-action by permutations on the collection $(U_{i,j})$;
 \item[$\bullet$]  $U_{i,j}$ is invariant under any action of a connected algebraic group on $X_l\to B$ identical on $B$;
 \item[$\bullet$] for any $t\le l$ and any fiber component $F_{i,j}$ on level $t$ the $H_t$-action is well defined and  free on $U_{i,j}$; 
 \item[$\bullet$] for any $l,t\in\Z$ with $0\le t<l\le m$ the composition $\rho_{l,t}=\rho_{t+1}\circ\ldots\circ\rho_l\colon X_l\to X_t$ sends a standard affine chart $U_{i,j}\subset X_l$ on level $t$ isomorphically over $B_i$ to a standard affine chart in $X_t$.
\end{itemize}
\eprop

\bproof The assertions are evidently true for the product $X_0=B\times\A^1$ in (\ref{eq: seq-aff-modif-final}) with $n_i=1$ $\forall i$ and $U^{(0)}_{i,1}=\pi_0^{-1}(B_i)=B_i\times\A^1$. Suppose by recursion that they hold for a surface $X_{l-1}$ in (\ref{eq: seq-aff-modif-final}) and the collection $(U^{(l-1)}_{i,j})$  of standard affine charts on $X_{l-1}$. The $\mu_d$-equivariant  fibered modification $\rho_{l}\colon X_{l}\to X_{l-1}$ in (\ref{eq: seq-aff-modif-final}) consists in blowing up with center at a union of $\mu_d$-orbits situated on special fiber components of the top level $l-1$ in $X_{l-1}$ and deleting the proper transforms of these fiber components, see Remark \ref{rem: extended-to-completions}.2. Let $F=F_{i,j}$ be one of these components, and let $U_F=U^{(l-1)}_{i,j}$ be the corresponding standard affine chart in $X_{l-1}$. Then the modification $\rho_{l}$ replaces $F$ with new components, say, $F_1,\ldots,F_M$ of level $l$ on $X_l$. The induced fibered modification of $U_F\cong_{B_i} B_i\times\A^1$ results in a GDF surface over $B_i$ with the only degenerate fiber $\pi_l^{-1}(b_i)=\sum_{j=1}^M F_j$. Blowing up just one point, say, $x_j$ one replaces $F$ by $F_j$.  Choosing local coordinates $(z,u)$ in $U_F\cong_{B_i} B_i\times\A^1$ so that $u(x_j)=0$, $z(x_j)=0$ the latter affine modification consists in passing from $\mathcal{O}_{B_i}(B_i)[u]$ to $\mathcal{O}_{B_i}(B_i)[u/z]=\mathcal{O}_{B_i}(B_i)[u']$ where $u'=u/z$.  This results in a standard  affine chart $U^{(l)}_{F,j}\cong_{B_i} B_i\times\A^1$ in $X_l$. In total one obtains $M$ such affine charts on $X_l$ over $U_F$ with the intersections as needed.
 For a fiber component $F$ on $X_{l-1}$ which does not contain any center of the modification $\rho_{l}$ we let $U^{(l)}_{F}=\rho_{l}^{-1}(U^{(l-1)}_{F})$. It is easily seen  that the desired conclusions hold for the resulting collection $(U^{(l)}_{F,j})$ of standard affine charts on $X_l$. We leave the details to the reader.
\eproof

\subsection{Natural coordinates}\label{ss: Natural coordinates} 
Let  $\pi\colon X\to B$ be again a marked GDF $\mu_d$-surface with a marking $z\in\mathcal{O}_B(B)$ where $z^*(0)=b_1+\ldots+b_n$. 

\bdefi[\emph{Natural local coordinates}]\label{def: local coordinates} 
Fix a component $F$ of a fiber $\pi_l^{-1}(b_i)$ on $X_l$, and let $U_F$ be the standard affine chart in $X_l$ about $F$. 
An isomorphism $U_F\cong_{B_i} B_i\times \A^1$ provides sections of $\pi_l|_{U_F}\colon U_F\to B_i$. Fixing  such a section and using the  vertical free $\mathbb{G}_a$-action  on $U_F$ one obtains a $\mathbb{G}_a$-equivariant isomorphism $U_F\cong_{B_i} B_i\times \A^1$ where $\mathbb{G}_a$ acts on the direct product via translations along the second factor. Fixing a coordinate $u$ in $\A^1$ one gets a coordinate, say, $u=u_F$ in $U_F$. 

The restriction $z|_{U_F}$ vanishes to order 1 along $F$ and has no further zero. Hence $(z,u_F)$ yields local coordinates  in $U_F$ near $F$.
We call these {\em natural coordinates}. The local coordinates $(z,u_F,v)$  in the standard affine chart 
$U_{F}\times\A^1$ in the cylinder  $\mathcal{X}_l$  about the affine plane $\cF=F\times\A^1=\Spec\mathbb{k}[u_F,v]$ are also called \emph{natural}. \edefi

\blem\label{lem: nat-coord} Let $\pi\colon X\to B$ be a marked GDF $\mu_d$-surface, and let $X_l$ be one of the surfaces in  {\rm (\ref{eq: seq-aff-modif-final})}. Let $\fF_l$ be the collection of the components of $z^*(0)$ in $X_l$ on the top level $l$. Then there exists a collection $(z,u_F)_{F\in \fF_l}$ of natural local coordinates such that $(u_F)_{F\in \fF_l}$ is a quasi-invariant of $\mu_d$ of weight $-l$. 
\elem

\bproof
For  $F\in\fF_l$ let
$\mu_e\subset\mu_d$ (where $e|d$) be the isotropy subgroup of $F$. The $\mu_e$-action on $U_F$ induces 
a $\mu_e$-action on $B_i\times\A^1\cong_{B_i} U_F$ (see Notation \ref{not: Bi}). The latter isomorphism yields a bijection between the sections of
$\pi_{l}|_{U_F}\colon U_F\to B_i$ and the  functions in $\mathcal{O}_{B_i}(B_i)$. Choosing an arbitrary such section  and averaging over its $\mu_e$-orbit one obtains a $\mu_e$-invariant section, say, $Z_F$. Then $Z_F$ can be taken for the zero  locus of a coordinate 
function $u_{F}$ in $U_F$. Let $\partial_{l}$ be the vertical $\mu_d$-quasi-invariant vector field on $X_{l}$ of weight $l$  constructed in Lemma \ref{lem: vert-vect-field}. The coordinate function $u_F\in\cO_{U_F}(U_F)$ with $u_F|_{Z_F}=0$ and $\partial_{l}(u_F)=1$ is  unique. For $\zeta\in\mu_e$ the ratio $\zeta.u_F/u_F$ does not vanish, hence it is constant along any $\pi_{l}$-fiber.
Thus, one has $\zeta.u_F=(\pi_{l}^*f)\cdot u_F$ for some $f\in\mathcal{O}_{B_i}^\times (B_i)$. From the relations  $\partial_{l}\circ\zeta=\zeta^l\partial_{l}$ and $\partial_{l}(\pi_{l}^*f)=0$ one deduces that $f=\zeta^{-l}$ is a constant, and so, $u_F$ is a $\mu_e$-quasi-invariant of weight $-l$. 

For any component $F'$ in the $\mu_d$-orbit of $F$ define natural coordinates $(z,u_{F'})$ in $U_{F'}$ in such a way that the collection of functions $(u_{F'})$ becomes $\mu_d$-quasi-invariant of weight $-l$. 
Choosing a representative of any $\mu_d$-orbit on $\fF_l$ and repeating the same procedure gives the desired collection of local coordinates. 
\eproof

\brems\label{rem: vvf-in-nat-coord}   1. If $F$ is a component of $z^*(0)$ on $X_l$ on level $l'<l$ then the $\mu_d$-equivariant morphism $\rho_{l}\colon X_{l}\to X_{l-1}$ restricts to an isomorphism on $U_F$. Hence one can define  local coordinates $(z,u_F)$ in $U_F$ where $F$ runs over all the components of $z^*(0)$ in $X_l$ in such a way that for a given $l'\le l$ the collection $(u_F)_{F\in\fF_{l'}}$ is a $\mu_e$-quasi-invariant of weight $-l'$.

In the local coordinates $(z,u_F)$ in $U_F\subset X_l$ of level $l'\le l$ the vertical vector field $\p_l$ on  $X_l$ constructed in Lemma \ref{lem: vert-vect-field} coincides with $z^{l-l'}\p/\p u_F$. In particular, in a top level chart $U_F$ one has $\p_l|_U=\p/\p u_F$.

2. In the case $e=1$ our choice of a $\mu_e$-invariant section is arbitrary, and the coordinate $u_F$  in the standard affine chart $U_{F}$ is defined up to  a factor which is an invertible function lifted from the base and a shift along the $u$-axis. Hence for $F$ of the top level one may consider that $u_F$ does not vanish in any center of the  blowup $\rho_{l+1}\colon X_{l+1}\to X_l$ contained in $F$.
\erems

\subsection{Special $\mu_d$-quasi-invariants} \label{ss: quasi-invariant} In the sequel we need $\mu_d$-invariant locally nilpotent derivations on the cylinders over GDF $\mu_d$-surfaces. To this end we construct  on such surfaces quasi-invariant functions of prescribed weights, see Corollary \ref{cor: q-inv} below. 
Let us start with the following  fact (cf.\
 \cite[Lem.\ 2.12]{KK}).
 
\blem\label{bpp2}
Suppose we are given  a finite group $G$, a character $\lambda \in G^\vee$, an affine $G$-variety $Y$, and a $G$-invariant closed subscheme $Z$ of $Y$ which is not necessarily reduced. Let $f\in\cO_Z(Z)$ belongs to $\lambda$, that is, $f\circ g= \lambda(g)\cdot f$ $\forall g\in G$. Then 
$f$ admits a regular $G$-quasi-invariant extension to $Y$ which belongs to $\lambda$.
\elem

\bproof Letting $A=\cO_Y(Y)$ and $B=\cO_Z(Z)$ the $G$-action yields graded decompositions $A=\bigoplus_{\chi \in G^\vee} A_\chi$
and  $B=\bigoplus_{\chi \in G^\vee} B_\chi$. The piece $A_\chi$ ($B_\chi$, respectively) consists of the $G$-quasi-invariants in $A$ (in $B$, respectively) which belong to the character $\chi$. The closed embedding $Z \hookrightarrow Y$ induces a surjection $\varphi : A \to B$ (\cite[Thm. III.3.7]{Hartshorne}). We claim that $\varphi$ restricts to a surjection $\varphi|_{A_\lambda}\colon A_\lambda\to B_\lambda$ for any $\lambda\in G^\vee$. Indeed, any $f\in  B_{\lambda}$ admits an extension  to a regular function $\tilde f\in A$ such that $\varphi (\tilde f)=f$.
There is a unique decomposition $\tilde f = \sum_{\chi \in G^\vee} \tilde f_\chi$. Hence $f = \sum_{\chi \in G^\vee} \varphi (\tilde f_\chi)$. Since $f \in B_{\lambda}$ the summands $\varphi (\tilde f_\chi)$ with $ \chi\neq\lambda$ vanish, and so, $f =\varphi (\tilde f_\lambda)$. Hence $\tilde f_{\lambda}\in A_\lambda$ is a desired $G$-quasi-invariant
extension of $f$ which belongs to $\lambda$.
\eproof

 \bcor\label{cor: q-inv} Let $\pi\colon X\to B$ be a marked GDF $\mu_d$-surface, let $X_l$ be one of the surfaces in  {\rm (\ref{eq: seq-aff-modif-final})}, and let $\fF_l'\subset\fF_l$ be a $\mu_d$-invariant set of top level fiber components in $X_l$.
Consider a $\mu_d$-quasi-invariant collection of natural local coordinates $(z,u_F)_{F\in\fF_l}$  as in Lemma {\rm \ref{lem: nat-coord}}. 
Then for any $s\gg 1$ 
one can find a $\mu_d$-quasi-invariant function $\tilde u \in \mathcal{O}_{X_l} (X_l)$ of weight $-l$ such that
 \begin{itemize}
\item[(i)]  
$\tilde u\equiv u_{F'} \, {\rm mod} \, z^s$ near $F'$ if $F'\in\fF_l'$ and
\item[(ii)]  $\tilde u\equiv 0 \, {\rm mod} \, z^s$  near $F'$ otherwise. 
\end{itemize}
 \ecor
 
 \bproof It suffices to apply
Lemma \ref{bpp2} with $Y=X_l$, $G=\mu_d$, $Z=({z^s})^*(0)$ being the $s$th 
infinitesimal neighborhood of the union of the special fiber components 
in $X_l$, 
$\lambda (\zeta)=\zeta^{-l}$ for $\zeta\in\mu_d$, and the function $f\in\cO_Z(Z)$ defined in the affine charts $Z\cap U_{F'}$ via $f|_{Z\cap U_{F'}}=u_{F'}|_{Z\cap U_{F'}}$ for $F'\in\fF_l'$ and
$f|_{Z\cap U_{F'}}=0$ otherwise. 
\eproof

\subsection{Examples of GDF surfaces of Danielewski type}\la{sec:examples}
We start with the classical Danielewski example.

\bexa[\emph{Danielewski surfaces}]\label{exa: dani} The Danielewski surface $X_1$ results from the affine modification $\rho_1\colon X_1\to X_0$ of the affine plane $X_0=\A^2=\Spec \mathbb{k}[z,u]$ with the divisor $z=0$ and the center $I=(z,u^2-1)$. This consists in blowing up the points $x_1=(0,1)$ and $x_{-1}=(0,-1)$ in $\A^2$ and deleting the proper transform of the affine line $z=0$. Letting $A_0=\cO_{X_0}(X_0)=\mathbb{k}[z,u]$ and $A_1=\cO_{X_1}(X_1)$ one has $$A_1=A_0[(u^2-1)/z]=\mathbb{k}[z,u,t_1]/(zt_1-u^2+1)\,.$$ The projections $\pi_0\colon X_0\to B=\Spec \mathbb{k}[z]$ and $\pi_1\colon X_1\to B$ are induced by the inclusions $\mathbb{k}[z]\hookrightarrow \mathbb{k}[z,u]\hookrightarrow \mathbb{k}[z,u,(u^2-1)/z]$. Thus, $X_1$ is given in $\A^3$ with coordinates $(z,u,t_1)$ by equation  
$$zt_1-u^2+1=0\,.$$ The unique reducible fiber $\pi_1^*(0)$ of the GDF surface $\pi_1=z|_{X_1}\colon X_1\to B=\A^1$ consists of two disjoint affine lines (components of level one) $$F_1=\{z=0,u=1\}\quad\mbox{and}\quad F_{-1}=\{z=0,u=-1\}\,.$$  The complement $X_1\setminus F_j$ for $j\neq i$ gives a standard affine chart $U_i\cong\A^2$ about $F_i$. The chart $U_1$ can be obtained via the affine modification of $X_1$ along the divisor $z^*(0)=F_1+F_{-1}$ with center the ideal $\mathbb{V}(F_1)=(z,u-1)$. Thus, $$\mathcal{O}_{U_1}(U_1)=A_1\left[(u-1)/z\right] 
=\mathbb{k}[z,u_1]\quad\mbox{where}\quad u_1=(u-1)/z=t_1/(u+1)\,.$$
Similarly, the standard affine chart on $X_1$ about $F_{-1}$ is
$$U_{-1}=X_1\setminus F_1=\Spec A_1[(u+1)/z]=\Spec \mathbb{k}[z,u_{-1}]\cong\A^2\,$$ where $z$ and $u_{-1}=(u+1)/z=t_1/(u-1)$ are natural coordinates in $U_{-1}$.
The locally nilpotent vertical vector field $$\p_1= z\p/\p u+2u\p/\p t_1$$ on $X_1$ restricts to $\p/\p u_i$ in $U_i$, $i=1,-1$. The phase flow of $\p_1$ yields a free $\mathbb{G}_a$-action  on $X_1$. It is sent under $\rho_1$ to the field $d\rho_1(\p_1)=z\p_0=z\p/\p u$ on $X_0$.

The second Danielewski surface $X_2$ is obtained via the affine modification $\rho_2\colon X_2\to X_1$ with the divisor $z^*(0)$ on $X_1$ and the center $I=(z,t_1)\subset A_1$.  Thus, $\rho_2$ consists in blowing up $X_1$ at the points $x_{1.i}=(0, i,0)\in F_i$,  $i=1,-1$ (the origins of the affine planes $U_i\cong\A^2$, $i=1,-1$) and deleting the proper transforms of the fiber components $F_1$ and $F_{-1}$. Letting
$t_2=t_1/z$ one obtains $$A_2:=\cO_{X_2}(X_2)=A_1[t_1/z]=\mathbb{k}[z,u,t_2]/(z^2t_2-u^2+1)\,.$$ 
Once  again, $X_2$ is a GDF surface with a unique reducible fiber $z^*(0)$ consisting of two components of level 2.
Iterating this procedure we arrive at a sequence of Danielewski surfaces  $$X_m=\Spec \mathbb{k}[z,u,t_m]/(z^mt_m-u^2+1) ,\quad m=1,2,\ldots$$ 
along with a sequence of affine modifications fitting in (\ref{eq: seq-aff-modif-final}) $$\rho_m\colon X_m\to X_{m-1},\quad
(z,u,t_m)\mapsto (z,u,t_{m-1}) \quad\mbox{where}\quad t_{m-1}=zt_m\,.$$  The only special fiber $z^*(0)$ in $X_m$ is reduced and consists of two components of level $m$. The vector field $z^m\p/\p u$ on $X_0$ lifts to the locally nilpotent vertical vector field  on $X_m$, $$\p_m=z^m\p/\p u+2u\p/\p t_m\,.$$ Its phase flow defines a free $\mathbb{G}_a$-action on $X_m$.  The latter action restricts in a standard affine chart on $X_m$ to the standard $\mathbb{G}_a$-action via shifts in the vertical direction.

The extended divisor $D_{\rm ext, m}$ of a minimal completion $\bar\pi\colon\bar X_m\to\PP^1$ has dual graph 
\vspace{6mm}
\be\label{dia: Dani-graph}
\Gamma_{\rm ext, 0}:
\qquad \cou{0}{F_\infty}\lin\cou{0}{\bar S}\lin\cou{0}{\bar F_0}\qquad
\qquad\text{resp.,}\qquad\Gamma_{\rm ext, m}:
\quad \cou{0}{F_\infty}\lin\cou{0}{\bar S}\lin
\cou{\!\!\!\!\!\!\!\!\!\!\!\!\!-2}{\bar F_0}\nlin
\xbshiftup{}{\cF_1}
\llin\xbshiftright{}{\cF_{-1}}
\ee
\vspace{6mm}

\noindent  where $m\ge 1$ and a box stands for the chain $ [[-2,\ldots,-2,-1]]$ of length $m$ so that $\cF_i$ ends with the $(-1)$-feather $\bar F_i$ of level $m$, $i=1,-1$ (see Example \ref{exa: u-modif-Dan}). 
\eexa

\bexa\label{ex: DS} As an immediate generalization of the preceding example consider the surface $X_m$ in $\A^3$ with equation $z^mt_m-q(u)=0$ where $q\in \mathbb{k}[u]$ is a polynomial of degree $d\ge 2$ with simple roots. This is a GDF surface with projection $\pi=z|_{X_m}\colon X_m\to\A^1$. Letting $X_0=\A^2$ one has a sequence of affine modifications (\ref{eq: seq-aff-modif-final}) where $\rho_i\colon X_i\to X_{i-1}$, $(z,u,t_i)\mapsto (z,u,t_{i-1}=zt_i)$.  The vector field $z^m\p/\p u$ on $X_0=\A^2$ lifts to the locally nilpotent vertical vector field  on $X_m$, 
$$\p_m= z^m\p/\p u+q'(u)\p/\p t_m\,$$ which generates a free vertical $\mathbb{G}_a$-action on $X_m$. The dual graph $\Gamma_{\rm ext, m}$ of the pseudominimal completion $\bar\pi\colon\bar X_m\to\PP^1$ differs from the graph in diagram (\ref{dia: Dani-graph}) in one aspect: instead of two contractible chains $\mathcal{F}_1$ and $\mathcal{F}_{-1}$, it has $d$ such chains $\mathcal{F}_j$, $j=1,\ldots,d$ of the same length $m$ which are the branches of the fiber tree $\Gamma_0(\bar\pi)$ in the root $\bar F_0$.
\eexa

\bexa[\emph{GDF surfaces given by equations}]\label{ex: GDFs} In $\A^3$ with coordinates $(z,u,t_1)$ consider the surface $X_1=\{zt_1-q_1(u)=0\}$ where  $q=q_1\in \mathbb{k}[u]$ is a polynomial of degree $d\ge 1$ with simple roots $\alpha_1,\ldots,\alpha_d$. The projection $\pi_1=z|_{X_1}\colon X_1\to \A^1$ makes of $X_1$ a GDF surface with a unique reducible fiber $\pi_1^{-1}(0)$ consisting of $d$  components $F_1,\ldots, F_{d}$ on level 1, see Example \ref{ex: DS} with $m=1$. The projection $ (z,u,t_1)\mapsto (z,u)$ represents $X_1$ as a result of  the fibered modification $\rho_1\colon X_1\to X_0=\A^2$ which contracts  $F_{i}$ to the point $P_i=(0,\alpha_i)\in X_0$, $i=1,\ldots,d$. 

Let further $q_2\in \mathbb{k}[u,t_1]$ be such that, for each $i=1,\ldots,d$, either $q_2(\alpha_i,t_1)\in \mathbb{k}[t_1]$ has $m_i=\deg q_2(\alpha_i,t_1)>0$ simple roots $\beta_{i,1}, \ldots,\beta_{i,m_i}$, or $q_2(\alpha_i,t_1)=0$; in the latter case we let $m_i=0$.  Consider the complete intersection $V_2\subset \A^4$ given in coordinates $(z,u,t_1,t_2)$ by 
$$zt_1-q_1(u)=0,\quad zt_2-q_2(u,t_1)=0\,.$$ There is a unique irreducible component $X_2$ of $V_2$ which dominates the $z$-axis, while the other components are  disjoint affine planes contained in the hyperplane $z=0$. Let $P_{i,j}=(0,\alpha_i,\beta_{i,j})\in F_i$. 
The projection $ (z,u,t_1,t_2)\mapsto (z,u,t_1)$ defines a fibered modification $\sigma_2\colon X_2\to X_1$ along the divisor $z^*(0)=\sum_{i=1}^{d} F_i$ with a reduced center
$$\bigcup_{m_i=0} F_i \cup \bigcup_{m_i>0} (P_{i,1}+\ldots+P_{i,m_i})\,.$$
The projection $\pi_2=z|_{X_2}\colon X_2\to\A^1$ makes of $X_2$ a GDF surface with a unique reduced fiber over $z=0$. One has $X_2\setminus\pi_2^{-1}(0)\cong_{\A^1_*}\A^1_*\times\A^1$, that is, $\pi_2\colon X_2\to\A^1$ is a Danielewski-Fieseler surface as defined in \cite{Du0}.  The fiber $\pi_2^{-1}(0)\subset X_2$ has $d-c_2$ components $F_i$ of level 1 and $c_2=m_1+...+m_d$ components $F_{i,j}$ of level 2.

The graph $\Gamma_0(\pi_2)$ is a rooted tree with a root $\bar F_0$ of level 0, $d$ vertices $\bar F_1,\ldots,\bar F_d$ on level 1, and $c_2$ vertices $\bar F_{i,j}$, $i=1,\ldots,d, j=1,\ldots,m_i>0$ on level 2 where $\bar F_{i,j}$ is a neighbor of $\bar F_i$.  Clearly, any rooted tree $\Gamma$ of height 3 can be realized as $\Gamma_0(\pi_2)$. Moreover, any Danielewski-Fieseler surface $\pi\colon X\to \A^1$ with $\Gamma_0(\pi)\cong\Gamma$ can be obtained in this way. 

The vector field $z^2\p/\p u$ on $X_0=\A^2$ lifted to $X_2$ extends in $\A^4$ to a locally nilpotent vector field 
$$\p_2=z^2\frac{\p}{\p u}+zq_1'(u)\frac{\p}{\p t_1}+\left(z\frac{\p q_2}{\p u}(u,t_1)+\frac{\p q_2}{\p t_1}(u,t_1)\right)\frac{\p}{\p t_2} \,.$$  The associated vertical $\mathbb{G}_a$-action on $X_2$ is identical on the components $F_i$ of level 1 (which correspond to $m_i=0$) and is free on the components $F_{i,j}$ of level 2 and in the complement $X_2\setminus\pi_2^{-1}(0)$. 

By  a recursive procedure one can realize 
 in this way  any Danielewski-Fieseler surface (cf.\ \cite{Du1}). For instance, in the case of the Danielewski surface $X_m$ from Example \ref{ex: DS}
one arrives at a system
$$zt_1-p(u)=0,\quad zt_2-t_1=0,\quad\ldots,\quad zt_m-t_{m-1}=0\,,$$
which reduces to a single equation $z^mt_m-p(z)=0$  defining the original proper embedding $X_m\hookrightarrow\A^3$.
\eexa

\section{Relative flexibility}
\subsection{Definitions and the main theorem}
\bnota\la{not: rel-SAut} Let $\pi\colon X\to B$ be a GDF surface, and let $\cX=X\times\A^1$ be the cylinder over $X$.
We let $${\SAut}_B(\cX)=\langle \exp(\partial)\,|\,\partial\in\LND(\mathcal{O}_\cX(\cX)),\,\,\partial (z)=0\rangle\,$$ be the subgroup of the group $\Aut (\cX)$ generated by  the exponentials of locally nilpotent derivations in $\LND(\cX)$ which are automorphisms of $\cX$ over $B$, cf.\ Section \ref{ss:ML}.
 For a component $\cF=F\times\A^1$ of $z^*(0)$ in $\cX$ any $\varphi\in\SAut_B(\cX)$ 
stabilizes the standard affine chart $U_{F}\times\A^1\subset\mathcal{X}$  about $\cF$ with natural coordinates $(z,u_{F},v)$, see Proposition \ref{prop: aff-chart} and Definition \ref{def: local coordinates}. 
Furthermore, for any such $\cF$ the restriction $\varphi|_{U_F\times\A^1}$ preserves the volume form ${\rm d} z \wedge {\rm d} u_F \wedge {\rm d} v $ on $U_F\times\A^1$, that is, the Jacobian determinant of $\phi|_{U_F\times\A^1}$ equals 1, see \cite[Lem.\ 4.10]{AFKKZ}.
\enota

\bdefi[\emph{Relative flexibility}]\label{bpd1} 

We say that the cylinder $\cX=X\times\A^1$  is  {\em relatively flexible} (RF, for short)
if for any natural $s\geq 1$,
any collection $\fF$ of top level components $\cF=F\times\A^1$ of $z^*(0)$ in $\cX$,
and any  collection of pairs of ordered finite subsets $\Sigma_{\cF}=\{ x_1, \ldots , x_M \}$ and $\Sigma'_{\cF}=\{ x_1', \ldots , x_M' \}$ in $\cF$  of the same cardinality $M=M(\cF)>0$ where 
$\cF$ runs over $\fF$,  there exists an automorphism $\varphi\in\SAut_B \cX$
which 
satisfies the conditions
\begin{itemize}\item[($\alpha$)] $\varphi (x_\nu)=x_\nu'$ with prescribed 
volume preserving $s$-jets at $x_\nu$, $\nu=1, \ldots ,M(\cF)$ provided these jets preserve locally the fibers of $\cX\to B$; 
\item[($\beta$)] $\varphi|_{U_F\times\A^1}\equiv\id\mod\, z^s$ near $\cF=F\times\A^1$ for any $\cF\notin\fF$.
\end{itemize}  We say that {\em the condition RF$(l,s)$ holds for $X$} if the relative flexibility holds for the cylinder over the surface $X_l$ in \eqref{eq: seq-aff-modif-final} for a given $s\ge 1$. 
\edefi

\bdefi[\emph{Equivariant relative flexibility}]\label{bpd2} 
Let $\pi\colon X\to B$ be a marked GDF $\mu_d$-surface. 
We let $\mathcal{X}(k)$ 
denote the cylinder $\cX=X\times\A^1$ equipped with a product $\mu_d$-action where $\mu_d$ 
acts on the second factor 
via $(\zeta, v)\mapsto\zeta^kv$ for all $v\in\A^1$ and $\zeta\in\mu_d$.
Assume that  the collection $\fF$ of fiber components as in Definition \ref{bpd1} along with the finite sets $\Sigma=\bigcup_{F \in \fF} \Sigma_{\cF}$ and
 $\Sigma'=\bigcup_{F \in \fF} \Sigma'_{\cF}$ are $\mu_d$-invariant, and the correspondence $\Sigma_{\cF}\mapsto \Sigma_{\cF}'$ is $\mu_d$-equivariant.
 
We say that $\cX (k)$ is {\em $\mu_d$-relatively flexible} if one can choose a $\mu_d$-equivariant automorphism $\varphi\in\SAut_{\mu_d,B} \cX$ as in Definition \ref{bpd1}  provided that 
\begin{itemize}\item[($\alpha_1$)] the collection of prescribed $s$-jets  is $\mu_d$-invariant;
\item[($\alpha_2$)] if the stabilizer $\mu_e\subset\mu_d$ of $\cF$ is nontrivial and $x_\nu\in \Sigma_{\cF}$  is an isolated fixed point  of the $\mu_e$-action on $\cF$ then $x_\nu'=x_\nu$ and the prescribed $s$-jet at $x_\nu$  is the one of the identity.
\end{itemize} 

 If the cylinder $\cX_l(k)$ satisfies the above conditions for a given $s> 1$ then we say that {\em the $\mu_d$-equivariant condition RF$(l,k,s)$ holds for $X$.}
\edefi

The main result of this section is the following

\bthm\label{thm: RF} Consider a marked GDF $\mu_d$-surface $\pi\colon X\to B$ along with a trivializing sequence \eqref{eq: seq-aff-modif-final}. Then the $\mu_d$-equivariant condition RF$(l,-l,s)$ holds for $X$ with arbitrary  $s\ge 1$ and $l\in\{0,\ldots,m\}$. \ethm
The proof is done in Section \ref{ss: rtr}.

\subsection{Transitive group actions on Veronese cones}

Let us recall the notion of a saturated set of locally nilpotent derivations (see \cite[Def.\ 2.1]{AFKKZ}). For a vector field $\p$ on a variety $X$ and an automorphism $g\in\Aut X$ we let 
${\rm Ad}(g)(\p)=dg(\p)\circ g^{-1}$.

\bdefi[\emph{Saturation}]\label{def: saturation}  Consider an affine variety $X=\Spec A$ over $\mathbb{k}$. A set $\cN$ of locally nilpotent regular vector fields on $X$ (that is, of locally nilpotent derivations of the affine $\mathbb{k}$-algebra $A=\cO_X(X)$) is called {\em saturated} if 
\begin{itemize}\item[(i)] for any $\p\in\cN$ and $a\in\ker\p$ the  {\em replica} $a\p$ belongs to $\cN$, and \item[(ii)] ${\rm Ad}(g)(\p)\in\cN$ $\forall g\in G$, $\forall \p\in\cN$ where $G=\langle \exp\p\,\vert\,\p\in\cN\rangle\subset\Aut A$.
\end{itemize} \edefi

\blem\label{lem: saturation} Given  a set  $\cN\subset \Der A$ of locally nilpotent derivations satisfying {\rm (i)} consider the group $G\subset\Aut A$ as in {\rm (ii)} generated by $\exp\, (\cN)$. Then the set of locally nilpotent derivations $$\cN_1=\{{\rm Ad}(g)(\p)\,\vert\, g\in G,\,\,\p\in\cN\}\,$$ is saturated and generates the same group $G$. \elem

\bproof It is not difficult to see that $\cN_1$ satisfies (i). Let $G_1=\langle \exp\p\,\vert\,\p\in\cN_1\rangle$
be the group generated by $\cN_1$. We claim that $G_1=G$, and so,   (ii) follows by the chain rule. Indeed, an automorphism  $g\in \Aut X$ sends a  vector field $\p$ on $X$ into the vector field $\p'$ on $X$ such that $\p'(g(x))=dg(\p(x))$ $\forall x\in X$. Hence $\p'={\rm Ad}(g)(\p)$. 
On the other hand, if $\p$ is locally nilpotent with the phase flow 
$\exp(t\p)\in \Aut X$, $t\in k$,
then for the phase flow $\exp(t\p')\in \Aut X$, $t\in k$, one has
$\exp(t\p')=g\circ \exp(t\p)\circ g^{-1}$. Since $ \exp(t\p)\in G$ it follows that
$\exp(t\p')\in G$ for any $g\in G$ and $\p\in\cN$. Thus $\exp(t\p')\in G$ for any $\p'\in\cN_1$, and so, $G_1=G$, as claimed.
\eproof

\bsit\label{fln1}  Given $c,d\in\N$ consider the affine plane $\A^2=\Spec\mathbb{k}[u,v]$ 
equipped with the  $\mu_d$-action 
\be\la{eq:diag-action} \zeta . (u,v)=(\zeta^{-c} u , \zeta^{-c} v)\quad\forall\zeta\in\mu_d\,.\ee
This action is not effective, in general. However, it restricts to an effective action of the subgroup $\mu_e\subset\mu_d$ where $e=d/\gcd(c,d)$. The quotient $V_e=\A^2/\mu_e=\A^2/\mu_d$ is a Veronese cone. 

Consider also the locally nilpotent vector fields $\sigma_1=v\frac{\partial}{\partial u}$ 
and $\sigma_2=u\frac{\partial}{\partial v}$ on  $\A^2$. The associated one-parameter groups 
$(u,v)\mapsto (u+tv,v)$ and $(u,v)\mapsto (u,v+tu)$, $t\in k$, generate the standard $\SL_2$-action on $\A^2$.
Notice that $\sigma_1$ and $\sigma_2$ are $\mu_d$-invariant and the $\mu_d$-action  on $\A^2$ commutes with the $\SL_2$-action. Hence the $\SL_2$-action descends to the Veronese cone $V_e$. 
\esit

\bnota\label{not: G} Given $s\ge 2$ consider the $\mu_d$-invariant replicas $$\sigma_{1,f}=v^{ds}f(v^d)\sigma_1\,\,\,\mbox{of}\,\,\, \sigma_1\quad\mbox{and}\quad\sigma_{2,g}=u^{ds}g(u^d) \sigma_2\,\,\,\mbox{of}\,\,\, \sigma_2\quad\mbox{where}\quad f,g\in \mathbb{k}[t]\,.$$
Any vector field $\sigma_{1,f}$ vanishes modulo $v^s$
on the axis $v=0$. Hence $\phi_f:=\exp(\sigma_{1,f})$ fixes this axis pointwise along with its infinitesimal neighborhood of order $s$. Let $\psi_g=\exp(\sigma_{2,g})$. The subgroup
\be\label{eq: G} G=\langle \phi_f,\,\,\psi_g\,\vert \, f,g\in \mathbb{k}[t]\rangle \subset\SAut (\A^2)\,\ee 
 acts identically on the infinitesimal neighborhood of order $s$ of the origin,
is transitive in $\A^2\setminus\{\bar 0\}$, and commutes with the $\mu_d$-action \eqref{eq:diag-action} on $\A^2$.

Consider the normal subgroup $H\lhd G$ of all the automorphisms $\alpha\in G$ of the from 
\be\label{eq: alpha0} \alpha = \varphi_1 \cdot \psi_1 \cdot \ldots \cdot \varphi_\nu \cdot \psi_\nu\,\ee
where $\varphi_i=\phi_{f_i}$  and $\psi_i=\psi_{g_i}$, $i=1,\ldots,\nu$
verifying the condition
\be\label{eq: A} \phi_1 \cdot \phi_2 \cdot \ldots \cdot\phi_\nu ={\rm Id}\,. \ee 
\enota

\bprop\label{prop: Veronese} With the notation as before, let
$(O_1, \ldots , O_M)$ and $(O'_1, \ldots , O'_M)$ be two collections of distinct $\mu_d$-orbits  in $\A^2$ with $\card O_i=\card O_i'$ for $i=1,\ldots,M$. 
For every $i=1,\ldots,M$ choose a representative $x_i\in O_i$. In the case $e=1$ suppose in addition that the singletons $O_i$ and $O_i'$ are different from the origin. Then
there exists an automorphism $\alpha\in H$ 
 such that 
\begin{itemize}\item[(i)] $\alpha (O_i)=O'_i$ for $i=1,\ldots,M$ and
\item[(ii)] 
$\alpha$ has 
prescribed values of volume-preserving 
$s$-jets at the points $x_i$, $i=1,\ldots,M$ provided that for $e> 1$ and $O_i=\{\bar 0\}$ for some $i\in\{1,\ldots,M\}$ this prescribed $s$-jet at the origin is the $s$-jet of the identity. \footnote{Instead of prescribing the value of an $s$-jet in a single point of a $\mu_e$-orbit one might prescribe a $\mu_e$-invariant collection of $s$-jets at the points of the orbit.}
\end{itemize}
 \eprop

\bproof  
Consider the Veronese cone $V_e=\A^2/\mu_d=\Spec \mathbb{k}[u,v]^{\mu_d}$, and let $\rho\colon \A^2\to V_e$ be the quotient morphism. The cone $V_e$ is smooth outside the vertex $\bar 0\in V_e$. Since the $G$-action on $\A^2$ is transitive in $\A^2\setminus\{\bar 0\}$ and commutes with the $\mu_d$-action it descends to a $G$-action on $V_e$ transitive in $V_e\setminus\{\bar 0\}$. The $\mu_d$-invariant locally nilpotent vector fields $\sigma_{1,f}$ and $\sigma_{2,g}$ also descends to $V_e$. The set $\cN$ of all these vector fields on $V_e$ satisfies condition (i) of Definition \ref{def: saturation}. By Lemma \ref{lem: saturation} the group $G\subset\Aut V_e$ is generated as well by a saturated set $\cN_1$ of locally nilpotent vector fields  on the cone $V_e$. Therefore one can apply Theorems 2.2 and 4.14 from \cite{AFKKZ}.

Suppose first that $\{\bar 0\}$ is not among the $O_i$'s. By \cite[Thm.\ 2.2]{AFKKZ}, $G$ acts infinitely transitively in $V_e\setminus\{\bar 0\}$. It follows that there exists $\alpha\in G$ which sends the points $y_i=\rho(O_i)\in V_e$ into the points $y_i'=\rho(O'_i)$, $i=1,\ldots,M$. Acting in $\A^2$ this $\alpha$ transforms the orbit $O_i$ into $O'_i$ for every $i=1,\ldots,M$. Thus $\alpha$ verifies (i).

By \cite[Thm.\ 4.14]{AFKKZ} one can find $\alpha\in G$ verifying (i) with a prescribed 
volume-preserving $s$-jet at each point $y_i=\rho(O_i)\in V_e$, $i=1,\ldots,M$. Since $\rho$ is a local isomorphism near  a chosen point $x_i\in O_i$ over $y_i$ and near its image $\alpha(x_i)\in O'_i$  one may prescribe a
volume-preserving $s$-jet of $\alpha$ at $x_i$ with the given zero term 
$\alpha(x_i)$. 

If $e\ge 2$ and, say, $O_1=\{\bar 0\}$ then also $O'_1=\{\bar 0\}$. Indeed, any $\mu_e$-orbit different from $\{\bar 0\}$ contains $e>1$ points. Since $\sigma_{1,f},\,\sigma_{2,g}\equiv 0\mod\, (u,v)^s$ for any $f,g\in \mathbb{k}[t]$, see Notation \ref{not: G}, one has $\alpha\equiv\id\mod\, (u,v)^s$ for any  $\alpha\in G$. Thus automatically $\alpha(\bar 0)=\bar 0$ and, moreover, the $s$-jet at the origin of any $\alpha\in G$ is the one of the identity map.

In the case  $e=1$ one has $V_e=\A^2$ and every orbit $O_i$ and $O'_i$ is a 
singleton different from $\{\bar 0\}$ by assumption.
Then the argument in the proof 
works without change. 

It remains to find such an automorphism in the subgroup $H$. Due to the infinite transitivity of $G$ in $V_e\setminus\{\bar 0\}$ one can find $\beta\in G$  such that for every 
$i=1,\ldots,M$ the image $\beta(\rho(O_i'))$ is located in the line $v=0$  in $V_e$.
By the preceding there exists $\alpha\in G$ such that $\alpha\circ \beta(\rho(O_i))
=\beta(\rho(O_i'))$  for all $i=1,\ldots,M$ where $\alpha$ has prescribed volume preserving $s$-jets at these points. Since $\beta$ is volume-preserving (see \cite[Lemma 4.10]{AFKKZ}) one can find $\alpha_1=\beta^{-1}\circ\alpha\circ \beta\colon \rho(O_i)\mapsto \rho(O_i')$ with prescribed volume preserving $r$-jets at the points $y_i=\rho(O_i)$,  $i=1,\ldots,M$. 

Decomposing $\alpha$ as in (\ref{eq: alpha0})
consider the automorphism $\varphi_0=(\varphi_1 \cdot \ldots \cdot\varphi_\nu)^{-1}\in G$.
Since $ \varphi_0 \cdot \varphi_1 \cdot \ldots \cdot \varphi_\nu=\id$ replacing $\varphi_1$ by
 $\varphi_0 \circ \varphi_1$ one obtains an automorphism $\alpha'=\varphi_0\cdot\alpha\in H$. 
The $s$-jet of $\alpha'$ at each point $\beta(\rho(O_i))$ is the same as the one of $\alpha$. Indeed,
$\varphi_0 (\beta(\rho(O_i')))=\beta(\rho(O_i'))$ since $\beta(\rho(O_i')) \subset \{ v=0 \}$, and $\varphi_0$ is identical on the $s$th infinitesimal neighborhood of this line.  Since the subgroup $H\lhd G$ is normal  one has $\alpha_1'=\beta^{-1}\circ\alpha\circ \beta\in H$ where $\alpha_1'\colon \rho(O_i)\mapsto \rho(O_i')$ and $\alpha_1'$ has prescribed volume preserving $s$-jets at the points $\rho(O_i)$,  $i=1,\ldots,M$. 
Thus, $\alpha'_1\in H$ satisfies both (i) and (ii). 
\eproof

\subsection{Relatively transitive group actions on cylinders}\la{ss: rtr}

 \bnota\label{nota: q-inv-grps} Let $\pi\colon X\to B$ be a marked GDF $\mu_d$-surface, and let $X_l$ be one of the surfaces in  {\rm (\ref{eq: seq-aff-modif-final})}. We fix  quasi-invariant natural coordinates in the standard affine charts $U_{F}$ in $X_l$ so that the conventions of Lemma \ref{lem: nat-coord} and Remark \ref{rem: vvf-in-nat-coord}.2 are fulfilled.
 
 Fix also a $\mu_d$-invariant collection $\fF$ of top level fiber components in $X_l$. For $F\in \fF$ let
$\cF_0$ be the $\mu_d$-orbit of $F$ in $X_l$. For $s\ge 2$ let $\tilde u=\tilde u(\cF_0) \in \mathcal{O}_{X_l} (X_l)$ be a
$\mu_d$-quasi-invariant function of weight $-l$ which verifies conditions (i) and (ii) of Corollary \ref{cor: q-inv}.
Let $\p_l$ be the $\mu_d$-quasi-invariant vertical vector field  of weight $l$ on $X_l$ as in Lemma \ref{lem: vert-vect-field}. Given 
$f,g\in \mathbb{k}[t]$ consider the $\mu_d$-invariant locally nilpotent derivations of the algebra $\cO_{\cX_l}(\cX_l)$, 
\be\label{eq: sigmas} \tilde\sigma_{1,f}=
v^{ds+1}f(v^d)\p_l\quad\mbox{and}\quad \tilde\sigma_{2,g}=\tilde u^{ds+1}g(\tilde u^d) \p/\p v\,.\ee   Letting $F$ run over $\fF$ the corresponding automorphisms $$\tilde\phi_f=\exp(\tilde\sigma_{1,f})\quad\mbox{and}\quad \tilde\psi_g=\exp(\tilde\sigma_{2,g})$$ in $\SAut_{\mu_d,B}\, \cX_l(-l)$ generate a subgroup
\be\label{eq: tilde G} G_{\fF}=\langle\tilde\phi_f,\,\,\tilde\psi_g\,\vert \, f,g\in \mathbb{k}[t]\rangle \subset{\SAut}_{\mu_d,B}\,\cX_l(-l)\,\ee
contained in the centralizer of the cyclic group induced by the $\mu_d$-action on $\cX_l(-l)$.

Consider further the normal subgroup $ H_{\fF}\lhd G_{\fF}$,
\be\label{eq: tilde H} H_{\fF}=\{\tilde\alpha=\tilde\varphi_1 \cdot \tilde\psi_1 \cdot \ldots \cdot \tilde\varphi_\nu \cdot \tilde\psi_\nu\in G_{\fF}\,\vert\,\tilde\varphi_1 \cdot \ldots \cdot \tilde\varphi_\nu =\id\}\,.\ee 
For $\tilde u=\tilde u(\cF_0)$
one has $\tilde u\equiv 0\mod\, z^s$ in $U_{F'}\times\A^1$ near $\cF'=F'\times\A^1$ for any $F'\notin\fF$, see condition (ii) in Corollary \ref{cor: q-inv}. 
Hence $\tilde\psi_g\equiv \id\mod\, z^s$ in $U_{F'}\times\A^1$ near $\cF'$ for any $F'\notin\fF$ and any $g\in \mathbb{k}[t]$. Due to (\ref{eq: tilde H}) for any $\tilde\alpha\in H_{\fF}$ one has
\be\label{eq: id} \tilde\alpha|_{U_{F'}\times\A^1}= (\tilde\varphi_1 \cdot \tilde\psi_1 \cdot \ldots \cdot \tilde\varphi_\nu \cdot \tilde\psi_\nu)|_{U_{F'}\times\A^1} \equiv\id\mod\, z^s\quad \forall F'\notin \fF\,.\ee
\enota

\bdefi[\emph{s-reduced automorphism}]\label{def: s-reduced} Let $F\subset X_l$ be a special fiber component and $U_F$ be the standard affine chart about $F$ in $X_l$. Consider the affine chart $U_F\times\A^1$ about the affine plane $\cF=F\times \A^1 \simeq \A^2$ in the cylinder $\cX_l(-l)$. The subgroup $G_{\fF}\subset\SAut_B\,\cX_l(-l)$ preserves every fiber of the $\A^2$-fibration $\cX_l(-l)\to B$ and, moreover, every fiber component. Hence any $\alpha\in G_{\fF}$ preserves the affine chart $U_F\times\A^1$ (cf.\ Proposition \ref{prop: aff-chart}). Choosing natural coordinates $(z,u,v)$ in $U_F\times\A^1$ the restriction $\alpha|_{U_F\times\A^1}$ can be written as
$$\alpha|_{U_F\times\A^1}\colon (z,u,v)\mapsto \left(z, \sum_{i=0}^\infty z^if_i(u,v), 
\sum_{i=0}^\infty z^ig_i(u,v)\right).$$ 
We say that $\alpha$ is {\em $s$-reduced} if $f_1=\ldots =f_s = g_1 = \ldots = g_s=0$,
that is,
\be\la{eq:reduced} \alpha (z,u,v) \equiv (z, f_0 (u,v), g_0 (u,v)) \,\, {\rm mod} \, z^{s}\ee
in any such affine chart $U_F\times\A^1$ in $\cX_l(-l)$.
\edefi

 \blem\label{lem: composing s-reduced}  
\begin{itemize}\item[(a)] A composition of $s$-reduced automorphisms  is again $s$-reduced.
\item[(b)] Any automorphism $\tilde\alpha\in H_{\fF}$ is  $s$-reduced.
\end{itemize} 
\elem

\bproof The proof of (a) is straightforward. To prove (b), due to (a) it suffices to show that  $\tilde\varphi=\tilde\varphi_f$ and $\tilde\psi=\tilde\psi_g$ are $s$-reduced for any $f,g\in \mathbb{k}[t]$. However, the latter is true only in the top level affine charts. Indeed, in  a standard affine chart $U_F$ on level $l'\le l$ in $X_l$ one has $\p_l|_{U_F}=z^{l-l'}\p/\p u$ 
(see Remark \ref{rem: vvf-in-nat-coord}.1). Hence 
\be\label{eq: tilde-phi} \tilde\varphi|_{U_F\times\A^1} = \exp{\left(v^{ds+1}f(v^d)\p/\p u\right)} \colon (z,u,v)\mapsto \left(z,u+z^{l-l'}v^{ds+1}f(v^d),v\right)\,
\ee 
is $s$-reduced if $l'=l$, that is, in any top level affine chart. 

Since $\tilde u|_{U_F}\equiv u\mod\, z^s$ if $l'=l$ and $\tilde u|_{U_F}\equiv 0\mod\, z^s$ otherwise, near the affine plane $\cF\subset U_F\times\A^1$ one has
\be\label{eq: tilde-psi} \tilde\psi|_{U_F\times\A^1} = \exp{\left(\tilde u^{ds+1}f(\tilde u^d)\p/\p v\right)} \colon (z,u,v)\mapsto \left(z,u, v+u^{ds+1}g(u^d)\right)\mod\, z^s\,\ee  if $l'=l$ and
$\tilde\psi|_{U_F\times\A^1} \equiv \id\mod\, z^s$  otherwise. In particular, any $\tilde\psi \in G_{\fF}$ is $s$-reduced. It follows that any automorphism $$\tilde\alpha= \tilde\varphi_1 \cdot \tilde\psi_1 \cdot \ldots \cdot \tilde\varphi_\nu \cdot \tilde\psi_\nu\in G_{\fF}\,$$ is $s$-reduced in every top level affine chart $U_F\times\A^1$. If $F$ has level $l'<l$ then $\tilde\psi_i|_{U_F\times\A^1}\equiv\id\mod\, z^s$ $\forall i=1,\ldots,\nu$. Hence for any $\tilde\alpha\in H_{\fF}$ one has by \eqref{eq: id} $$\tilde\alpha|_{U_F\times\A^1}= (\tilde\varphi_1 \cdot \ldots \cdot \tilde\varphi_\nu)|_{U_F\times\A^1} \equiv\id\mod\, z^s\,,$$ that is, $\tilde\alpha$ is $s$-reduced.
\eproof

\bprop\label{prop: rel-flexibility}  Let $\fF$ be a $\mu_d$-invariant collection of top level components $\cF=F\times\A^1\subset \cX_l(-l)$ of $z^*(0)$, and let $\Sigma,\,\Sigma'\subset\bigcup_{\cF\in\fF} \cF$ be two $\mu_d$-invariant finite sets which meet every special fiber component  $\cF\in\fF$ with the same positive cardinality.  Assume that for some  $s>0$ at each point $x\in\Sigma\cap\cF$ we are given a volume preserving two-dimensional
$s$-jet $j_x$ of an automorphism $\cF\to\cF$
such that 
\begin{itemize} \item the zero term of $j_x$ runs over $\Sigma'$ when $x$ runs over $\Sigma$; 
\item the collection $(j_x)_{x\in\Sigma}$ commutes with the $\mu_d$-action on $\cX_l(-l)$; 
\item if $e=e(\cF)=d/\gcd(d,l)> 1$, see {\rm \ref{fln1}}, and $x_0\in\cF\cap\Sigma$ is a fixed point of the stabilizer $\mu_e$ of $\cF$ in $\mu_d$ then $j_{x_0}$ is the $s$-jet of the identity.
\end{itemize} 
Then there exists a {\rm (}$\mu_d$-equivariant{\rm )} 
 automorphism $\tilde\alpha\in H_{\fF}$ 
such that
\begin{itemize}\item[(i)] $\tilde\alpha (\Sigma)=\Sigma'$;
\item[(ii)] 
$\tilde\alpha$ has the
prescribed two-dimensional $s$-jets at the points of $\Sigma$; 
\item[(iii)] $\tilde\alpha|_{U_F\times\A^1}\equiv\id\mod\, z^s\quad \forall \cF\notin \fF$.
\end{itemize}
\eprop

\bproof Let $\cF\in\fF$, and let $\mu_d(\cF)$ be the $\mu_d$-orbit of $\cF$ in $\cX_l(-l)$. It suffices to construct such an automorphism $\tilde\alpha\in H_{\fF}$ assuming that $\fF$ consists of the components from the orbit $\mu_d(\cF)$. Indeed, then $\tilde\alpha\in H_{\fF}$ coincides with the identity modulo $z^s$ near any special fiber component $\cF'\notin\fF$. Composing such automorphisms $\tilde\alpha$ for different top level orbits 
one obtains a desired automorphism in the general case. 

Furthermore, if (i) and (ii) hold for a special fiber component $\cF$
then they automatically hold for any special fiber component $\cF'\in\mu_d(\cF)$ due to the $\mu_d$-invariance of the conditions and the $\mu_d$-equivariance of the automorphisms $\tilde\alpha\in H_{\fF}$. Hence it suffices to take care of a particular $\cF\in\fF$ equipped with two collections of orbits $\{O_i\cap \cF\}_{i=1,\ldots,\nu}$ and $\{O'_i\cap \cF\}_{i=1,\ldots,\nu}$ of the stabilizer $\mu_e$ of $\cF$ in $\mu_d$, see Proposition \ref{prop: Veronese}.  Let  $U_F\times\A^1$ be the standard affine chart about $\cF$ equipped with $\mu_e$-quasi-invariant natural local coordinates $(z,u_F,v)$.  Due to Remark  \ref{rem: vvf-in-nat-coord}.2 for $e=1$ one may assume $O_i\neq\{\bar 0\}$ $\forall i$ as needed in  Proposition \ref{prop: Veronese}, see Notation \ref{nota: q-inv-grps}.

Comparing \eqref{eq: tilde-phi} and \eqref{eq: tilde-psi} with \eqref{eq: G} in Notation \ref{not: G} one can see that the automorphisms $\tilde\varphi_f, \tilde\psi_g\in H_{\fF}$ restrict to
$$\tilde\varphi_f|_{\cF} = \varphi_f\quad\mbox{and}\quad  \tilde\psi_g|_{\cF}=\psi_g\,,$$ respectively, where $\varphi_f$ and $\psi_g$ run over the generators of the subgroup $G\subset\SAut_{\mu_d} (\cF)$ when $f,g$ run over $\mathbb{k}[t]$. Let $H\lhd G$ be as in \ref{not: G}. Applying Proposition \ref{prop: Veronese} one can find an automorphism $\alpha = \varphi_1 \cdot\psi_1 \cdot \ldots \cdot \varphi_\nu \cdot \psi_\nu\in H$
satisfying  in the affine plane $\cF\cong\A^2$ conditions (i) and (ii) of this proposition. Extending every $\varphi_i$ to $\tilde\varphi_i\in H_{\fF}$ and $\psi_i$ to $\tilde\psi_i\in H_{\fF}$ one obtains an $s$-reduced automorphism $\tilde\alpha = \tilde\varphi_1 \cdot \tilde\psi_1 \cdot \ldots \cdot \tilde\varphi_\nu \cdot \tilde\psi_\nu\in H_{\fF}$, see Lemma \ref{lem: composing s-reduced}(b). Since $\tilde\alpha$ also satisfies (\ref{eq: id}) in Notation \ref{nota: q-inv-grps} then (iii) holds, and so, $\tilde\alpha$ is a  desired automorphism.  
\eproof

\noindent {\em Proof of Theorem {\rm \ref{thm: RF}}.} Let $\pi\colon X\to B$ be a marked GDF $\mu_d$-surface. We have to show that the $\mu_d$-equivariant
condition $\RF(l,-l,s)$ holds for $X$ whatever are $s\ge 1$ and $l\in\{0,\ldots,m\}$. It suffices to show that, given any $\mu_d$-invariant collection $\fF$ of top level special fiber components in $X_l$ and any two finite sets $\Sigma, \Sigma'\subset \bigcup_{F\in\fF}\cF$ with the same $\mu_d$-orbit structure and
with $\card \Sigma_F=\card\Sigma'_F>0$ $\forall F\in \fF$ where $\Sigma_F=\Sigma\cap \cF$, there exists $\varphi\in\SAut_B\,\cX_l(-l)$ such that the $\mu_d$-equivariant versions of conditions ($\alpha$) and ($\beta$) in Definition \ref{bpd1} are fulfilled. 

By Proposition \ref{prop: rel-flexibility} one can find $\varphi\in H_{\fF}\subset \SAut_{\mu_d,B}\,\cX_l(-l)$ verifying (i) and (ii) of Proposition \ref{prop: Veronese} and condition  (\ref{eq: id}). That is, $\varphi$ is $\mu_d$-equivariant, $s$-reduced, verifies (\ref{eq: id}), sends $\Sigma$ to $\Sigma'$, and has prescribed 2-dimensional $r$-jets (in the vertical planes) in the (chosen) points on each $\mu_d$-orbit in $\Sigma$. Since $\varphi$ is $s$-reduced it has as well the prescribed volume preserving three-dimensional $s$-jets in the given points. Hence $\varphi$ satisfies conditions ($\alpha$), ($\alpha_1$), and ($\alpha_2$)  of Definitions \ref{bpd1} and \ref{bpd2}.
Due to  \eqref{eq: id}, $\varphi$ satisfies also condition ($\beta$)
of Definition \ref{bpd1}.
\qed

\subsection{A relative Abhyankar-Moh-Suzuki Theorem} \label{ss: AMS}

We need in the sequel the following version of the Abhyankar-Moh-Suzuki Epimorphism Theorem. 

\bprop\la{prop: autm} Let $\pi\colon X\to\ B$ be a GDF surface, let $\{F_1,\ldots, F_t\}$ be a collection of top level special fiber components in $X$, and let $\cF_i=F_i\times\A^1\cong\A^2$, $i=1,\ldots,t$, be the corresponding components of the divisor $z^*(0)$ in the cylinder $\cX$. For every $i=1,\ldots,t$ we fix in $\cF_i$ a curve $C_i\cong\A^1$. Then there exists an automorphism $\alpha\in\SAut_B(\cX)$ such that $\alpha(C_i)=F_i\times\{0\}$, $i=1,\ldots,t$.
\eprop

\bproof  Choose $i\in\{1,\ldots,t\}$, and let $F=F_i,\,\cF=\cF_i$, and $C=C_i\subset\cF$. Our assertion follows  by induction on $i$ from the next claim.

\smallskip

\noindent {\bf Claim.} \emph{There exists an automorphism $\beta=\beta_i\in\SAut_B(\cX)$ such that $\beta(C)=F\times\{0\}$ and $\beta(F'\times\{0\})=F'\times\{0\}$ for any special fiber component $F'\neq F$.}

\smallskip

\noindent Indeed, to deduce the assertion it suffices to apply this claim successively for $i=1,\ldots,t$. 

\smallskip

\noindent\emph{Proof of the claim.} By Corollary \ref{cor: q-inv} one can find $\tilde u\in\cO(X)$  such that 
\begin{itemize} \item[(i)] $\tilde u|_{F}=u_F$ where $u_F$ is an affine coordinate on $F$; \item[(ii)]  
$\tilde u|_{F'}=0$ for any $F'\neq F$. \end{itemize}
Consider the locally nilpotent derivations on $\cO_\cX(\cX)$, $$\sigma_1=\p_l\quad\mbox{and}\quad  \sigma_2=\tilde u\frac{\p}{\p v}\,$$
where $l$ is the highest level of the  special fiber components of $X$ and $\p_l$ is a vertical locally nilpotent vector field on $X$ as in Lemma \ref{lem: vert-vect-field} so that $\p_l(z)=0$ and $\p_l|_F=\p/\p u_F$.
Consider  the replicas 
$$\sigma_{1,f}=f(v)\sigma_1\quad\mbox{and}\quad  \sigma_{2,g}=g(\tilde u)\sigma_2 \quad\mbox{where}\quad f,g\in \mathbb{k}[t]\,.$$ Their exponentials $$\varphi_f=\exp(\sigma_{1,f}),\quad \psi_g=\exp(\sigma_{2,g})\in{\SAut}_{B}\cX$$  generate 
a subgroup $\cH\subset\SAut_B\cX$.  In the affine plane $\cF\cong\Spec\mathbb{k}[u_F,v]$ one has
$$\varphi_f|_{\cF} \colon (u_F,v)\mapsto (u_F+f(v),v) \quad\mbox{and}\quad \psi_g|_{\cF} \colon (u_F,v)\mapsto (u_F,v+u_Fg(u_F))\,.$$ In particular, $\cH|_{\cF}$ contains all the transvections, hence also the group $\SL(2,k)$. For $F'\neq F$ by virtue of (ii) the group $\cH|_{\cF'}$ is generated by the shears $\varphi_f|_{\cF'}$. It follows that 
\begin{itemize} \item $\cH|_{\cF}=\SAut \cF\cong \SAut\A^2$ and \item  the coordinate line $F'\times\{0\}\subset\cF'$ stays $\cH$-invariant  for any  $F'\neq F$.  \end{itemize}
 Now the claim follows by the Abhyankar-Moh-Suzuki Theorem. 
\eproof

The next lemma allows to interchange the $u$- and $v$-axes in the top level special fiber components of $\cX\to B$. 

\blem\label{lem: tau} Let $\pi\colon X\to B$ be a marked GDF $\mu_d$-surface, let $X_l$ be one of the surfaces in  {\rm (\ref{eq: seq-aff-modif-final})}, and let $\{(z,u_F)\}$ be a quasi-invariant system of natural local coordinates  in the standard local charts $U_F$ about the special fiber components $F$ in $X_l$. Given $s>1$ 
there exists a $\mu_d$-equivariant  automorphism $\tau\in\SAut_B\,\cX_l(-l)$ such that \begin{itemize}\item $\tau |_{U_F\times\A^1}\colon (z,u_F,v)\mapsto (z,v,-u_F) \mod\, z^s$ for any top level $F$; \item
$\tau|_{U_{F}\times\A^1}=\id \mod\, z^s$  for any  $F$ of lower level. \end{itemize}
\elem

\bproof 

Likewise in (\ref{eq: sigmas}) we let
\be\label{eq: sigmas-bis} \tilde\sigma_{1}= v\p_l\quad\mbox{and}\quad \tilde\sigma_{2}=-\tilde u\p/\p v\,\ee where $\p_l$ is the vertical vector field on $X_l$ as in Lemma \ref{lem: vert-vect-field} and $\tilde u\in\cO_{\cX_l(-l)}(\cX_l(-l))$ is a $\mu_d$-quasi-invariant  of weight $-l$ verifying conditions (i) and (ii) of Corollary \ref{cor: q-inv}. Letting $\tilde\varphi=\exp{\left(\tilde\sigma_{1}\right)}\quad\mbox{and}\quad \tilde\psi=\exp{\left(\tilde\sigma_{2}\right)}$ by virtue of (i) and (ii) one obtains
$$\tilde\phi |_{U_{F}\times\A^1}\colon (z,u_F,v)\mapsto \left(z,u_F+v, v\right)\mod\, z^s$$
and
$$\tilde\psi |_{U_{F}\times\A^1}\colon (z,u_F,v)\mapsto \left(z,u_F, v-u_F\right)\mod\, z^s\,$$  if $F$ is of top level and
$\tilde\psi|_{U_{F}\times\A^1} \equiv \id\mod\, z^s$  otherwise, cf.\ (\ref{eq: tilde-phi}) and (\ref{eq: tilde-psi}). Letting $\tau=\tilde\phi\tilde\psi\tilde\phi$ one gets
$$\tau |_{U_{F}\times\A^1} \colon (z,u_F,v)\mapsto \left(z,v, -u_F\right)\mod\, z^s$$
if $F$  is of top level and $\tau |_{U_{F}\times\A^1} \equiv \id\mod\, z^s$ otherwise.
\eproof

We need as well the following versions of Lemma \ref{lem: tau}.

\blem\la{lem: tau-1} Under  the assumptions of  Lemma {\rm \ref{lem: tau}} consider a $\mu_d$-invariant subset $\Upsilon\subset\{b_1,\ldots,b_n\}=z^{-1}(0)$. Given  a collection $\fF_\Upsilon(l)$ of  top level special fiber components in $\pi_l^{-1}(\Upsilon)\subset X_l$ there exists a $\mu_d$-equivariant  automorphism $\tau\in\SAut_B\,\cX_l(-l)$ such that
\be\la{eq: tau-11} \tau |_{U_{F}\times\A^1} \colon (z,u_F,v)\mapsto \left(z,v, -u_F\right)\mod\, z^s\ee
if $F\in \fF_\Upsilon(l)$ and $\tau |_{U_{F}\times\A^1} \equiv \id\mod\, z^s$ otherwise.
\elem

\bproof Choose a $\mu_d$-invariant $h\in\cO_B(B)$ such that $h-1\equiv 0\mod\,z^s$ near each point $b_i\in\Upsilon$ and $h\equiv 0\mod z^s$ near each point $b_i\notin\Upsilon$. Denote by the same letter the lift of $h$ to $\cX_l$. For the regular $\mu_d$-quasi-invariant vector field $\p_{l}$  of weight $l$ on  $\cX_l(-l)$ 
one has $\p_{l}(h)=0$ and $\p h/\p v=0$. Let $\tilde u\in\cO(X)$ be as in Corollary \ref{cor: q-inv} and $\tilde\sigma_{i}$, $i=1,2$ be as in \eqref{eq: sigmas-bis}. Then the locally nilpotent vector fields
\be\label{eq: sigmas-bis-bis} \tilde\sigma_{1,h}= h\tilde\sigma_{1}\quad\mbox{and}\quad \tilde\sigma_{2,h}=h\tilde\sigma_{2}\,\ee  
 on $\cX_l(-l)$
are $\mu_d$-invariant. Using these derivations instead of $\tilde\sigma_{i}$, $i=1,2$ and proceeding as in the proof of Lemma \ref{lem: tau} yields the result. \eproof

\section{Rigidity of cylinders upon deformation of surfaces}
\subsection{Equivariant Asanuma modification}
In the next lemma we introduce an equivariant version of the Asanuma modification. For the reader's convenience we repeat in (a) the statement of Lemma \ref{lem: as-trick0}. 

\begin{lem}\label{lem: as-trick} Let $\pi\colon X\to B$ be a GDF surface, and let $\rho\colon X'\to X$
 be a  
fibered modification along a reduced principal divisor $\div f$ where $f\in\pi^*\cO_B(B)\setminus\{0\}$ with center a reduced zero-dimensional subscheme $\mathbb{V}(I)$ where $I\subset \cO_X(X)$ is an ideal, see Definition {\rm \ref{def: simple-aff-modif}}. 
Consider the principal divisor $\cD=\mathbb{V}(f)\times\A^1$ on the cylinder  $\cX=X\times\A^1$
and the ideal $J=(I,v)\subset \mathcal{O}_{\cX}(\cX)$ with support $\mathbb{V}(I)\times\{0\}\subset\mathbb{V}(f)\times\{0\}$. 
Then the following holds.
\begin{itemize}\item[(a)]
The cylinder $\cX'=X'\times\A^1$  is isomorphic to the affine modification $Z$ of 
$\cX$ along the divisor $\cD$ with the center $J$.
This isomorphism fits in the commutative diagram
\be\label{diagr: Asanuma-modif}
 \bdi
 \cX'&\rTo<{\cong} & Z & \rTo<{}  & \cX\\
 & \rdTo & \dTo>{} &\ldTo & \\
 & & B & & 
\edi\ee
where the vertical arrows are $\A^2$-fibrations over $B$. 
\item[(b)] Assume that  the modification $\rho\colon X'\to X$ is equivariant with respect to actions of 
a finite group $G$ on $X,X'$, 
and $B$ and, moreover, the ideal $I$ is $G$-invariant and the function $f$ is $G$-quasi-invariant and 
belongs to a character 
$\chi\in G^\vee$. Define the $G$-action on the factor $\A^1$ of the cylinder $\cX=X\times\A^1$ via 
the multiplication by 
a character $\lambda\in G^\vee$. Then the morphisms  in {\rm (\ref{diagr: Asanuma-modif})}
 are $G$-equivariant
where $G$ acts on the factors $\A^1$ of the cylinder $\cX'=X'\times\A^1$ 
via the multiplication by $\lambda/\chi$.  
In particular, if $G=\mu_d$, $\chi\colon\zeta\mapsto \zeta^t$, and $\lambda\colon\zeta\mapsto 
\zeta^k$ then 
$\lambda/\chi\colon\zeta\mapsto \zeta^{k-t}$ $\forall\zeta\in\mu_d$.
\end{itemize}
\end{lem}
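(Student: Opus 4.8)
The plan is to deduce part (a) from the already-proven Lemma~\ref{lem: as-trick0} and to obtain part (b) by transporting the $G$-action through the explicit isomorphism furnished by that proof.

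For (a), the first step is to record that, by Definition~\ref{def: simple-aff-modif} together with Remark~\ref{rem: non-complete-intersection}, the fibered modification $\rho\colon X'\to X$ is the affine modification of $X$ along the principal divisor $\div f$ with center $I$; writing $\mathfrak A=\cO_X(X)$ and choosing generators $\mathfrak a_1,\dots,\mathfrak a_l$ of $I$, this means $\cO_{X'}(X')=\mathfrak A[\mathfrak a_1/f,\dots,\mathfrak a_l/f]\subset\Frac\mathfrak A$. Since $f\in\pi^*\cO_B(B)\setminus\{0\}$ and the reduced center $\mathbb V(I)$ is supported on $\mathbb V(f)$, Lemma~\ref{lem: as-trick0} applies verbatim with $D=\div f$. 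Its proof computes the coordinate ring of the affine modification $Z$ of $\cX=\Spec\mathfrak A[v]$ along $\cD=\mathbb V(f)\times\A^1$ with center $J=(I,v)$ as
$$\cO_Z(Z)=\mathfrak A[v]\bigl[\mathfrak a_1/f,\dots,\mathfrak a_l/f,\,v/f\bigr]=\mathfrak A\bigl[\mathfrak a_1/f,\dots,\mathfrak a_l/f\bigr][w],\quad w=v/f,$$
the second equality using $v=fw$ with $f\in\mathfrak A$. Thus $\cO_Z(Z)=\cO_{X'}(X')[w]=\cO_{\cX'}(\cX')$, which is the desired isomorphism $\cX'\cong Z$. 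Every ring occurring lives inside $\Frac(\mathfrak A)[v]$ and contains $\pi^*\cO_B(B)$, and $f$ is pulled back from $B$, so all arrows commute with the projections to $B$; and each of $\cX\to B$ and $\cX'\cong Z\to B$ has general fibre $\A^1\times\A^1\cong\A^2$ because the general fibres of the GDF surfaces $X\to B$ and $X'\to B$ are affine lines. This gives the diagram~(\ref{diagr: Asanuma-modif}) with $\A^2$-fibrations as vertical arrows.

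For (b), I would next verify that the modification datum is $G$-(quasi-)invariant, so that $Z$ carries a $G$-action for which $Z\to\cX$ is equivariant. Extend the $G$-action on $\cX=\Spec\mathfrak A[v]$ to $\Frac(\mathfrak A)[v]$. By hypothesis $g^*f=\chi(g)f$ and $g^*(I)=I$, and the action on the cylinder factor is normalized (consistently with the statement) so that the coordinate $v$ belongs to $\lambda$, i.e.\ $g^*v=\lambda(g)v$. Then $\mathbb V(f)$ is $G$-invariant, hence so is $\cD$; and $J=(I,v)$ is $G$-invariant because $g^*\mathfrak a_i\in I$ while $g^*v$ is a nonzero scalar multiple of $v$, so $(g^*v)=(v)$. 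Consequently $\cO_{X'}(X')$ and $\cO_Z(Z)$ are $G$-stable subrings of $\Frac(\mathfrak A)[v]$ and all morphisms in~(\ref{diagr: Asanuma-modif}) are $G$-equivariant.

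The only genuinely new point is to read off the character by which $G$ acts on the cylinder factor of $\cX'\cong Z$. The isomorphism of (a) carries the coordinate $v$ of $\cX$ to the new cylinder coordinate $w=v/f$, and since $f$ belongs to $\chi$ and $v$ to $\lambda$ one has $g^*w=g^*v/g^*f=(\lambda/\chi)(g)\,w$; thus $w$ is a $G$-quasi-invariant belonging to $\lambda/\chi$, i.e.\ $G$ acts on the $\A^1$-factor of $\cX'=X'\times\A^1$ through $\lambda/\chi$. Specializing to $G=\mu_d$ with $\chi\colon\zeta\mapsto\zeta^t$ and $\lambda\colon\zeta\mapsto\zeta^k$ yields the weight $\zeta\mapsto\zeta^{k-t}$, as claimed. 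The main obstacle is not conceptual but a matter of bookkeeping: one must pin down the convention relating ``multiplication by a character'' on a factor $\A^1$ to the induced character on its coordinate function so that the twist comes out as $\lambda/\chi$ and not $\lambda\chi$, and one must check that dividing by the quasi-invariant $f$ keeps the generators inside the relevant $G$-stable subrings, which is exactly the $G$-invariance of $I$ used above.
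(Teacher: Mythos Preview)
Your proof is correct and follows essentially the same route as the paper's: part (a) is deferred to Lemma~\ref{lem: as-trick0}, and part (b) is obtained by observing that the new cylinder coordinate $w=v/f$ transforms by $\lambda/\chi$ since $v$ belongs to $\lambda$ and $f$ to $\chi$. The paper's argument is terser (one sentence for (b), with the roles of $v$ and $v'$ swapped relative to your notation), but the content is identical.
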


\begin{proof} For the proof of (a) see Lemma \ref{lem: as-trick0}.
Statement (b) follows since under its assumptions the variable $v'$ in the proof of Lemma \ref{lem: as-trick0} belongs to $\lambda$, hence $v=v'/f$ belongs to $\lambda/\chi$. 
\end{proof}

\bdefi[\emph{Asanuma modifications}]\label{defi: asanuma} The upper line in (\ref{diagr: Asanuma-modif}) yields an 
affine modification $\cX'\to \cX$
called an 
{\em Asanuma modification of the first kind}.
Its center is a reduced zero-dimensional subscheme of $\cX$.
 
We call an  {\em Asanuma modification of the second kind}  an affine modification $\cX''\to\cX$ of the cylinder $\cX=X\times\A^1$ over a marked GDF surface $\pi\colon X\to B$ along the divisor $\cD =(f\circ\pi)^*(0)$  on $\cX$ where $f\in\cO_B(B)\setminus\{0\}$ with a one-dimensional center $\mathbb{V}(I)\subset X\times\{0\}$ where $I=(f,v)\subset\cO_{\cX}(\cX)$. Due to the next lemma the latter modification  results in a cylinder isomorphic to $\cX$ over $B$.
\edefi

\bsit
\label{nota: cylinder} Let $\pi\colon X\to B$ be  a marked GDF $\mu_d$-surface with a marking $z\in\mathcal{O}_B(B)\setminus\{0\}$. Recall (see Definition \ref{bpd2}) that
$\mathcal{X}(k)$ stands for the cylinder $\cX=X\times\A^1$ equipped with a product $\mu_d$-action where $\mu_d$ 
acts on the second factor 
via $(\zeta, v)\mapsto\zeta^kv$ for $v\in\A^1$ and $\zeta\in\mu_d$. By abuse of notation 
we still denote by  
$\pi$ the $\mu_d$-equivariant projection of the induced  $\A^2$-fibration $\mathcal{X}(k)\to B$.
\esit

\blem\label{lem: As-2d-kind} In the notation of {\rm \ref{defi: asanuma}}--{\rm \ref{nota: cylinder}} consider  an Asanuma modification of the second kind $\cX''\to\cX$. Then the following hold. \begin{itemize}\item[(a)]  There is an isomorphism $\cX''\cong_B\cX$. 
\item[(b)] If $\pi\colon X\to B$ is a marked GDF $\mu_d$-surface with a marking $z\in\cO_{B}(B)\setminus\{0\}$, $f=z$, and $\cX=\cX(k)$ then $\cX''=\cX''(k-1)$.
\item[(c)] Let things be as in {\rm (b)}. Consider a second marked GDF $\mu_d$-surface $\pi'\colon X'\to B$  with the same marking $z\in\cO_B(B)\setminus\{0\}$, and let $\cX'=X'\times\A^1$ where $\A^1=\Spec \mathbb{k}[v']$. Assume that for some natural $r$ there is an equivariant isomorphism $\varphi_r\colon\cX(r)\stackrel{\cong_{\mu_d,B}}{\longrightarrow}\cX'(r)$ such that $\varphi_r^*(v')\equiv v \mod\, z^s$ where $s>d$. Then for any  $k\in\ZZ$ there is an equivariant isomorphism $\varphi_{k}\colon\cX(k)\stackrel{\cong_{\mu_d, B}}{\longrightarrow}\cX'(k)$ such that $\varphi_k^*(v')\equiv v \mod\, z^{s-d}$.
\end{itemize}
\elem

\bproof (a) Indeed,
the affine modification $\cX''\to\cX$ amounts to 
\be\la{eq:modif-II} \cO_{\cX}(\cX)\hookrightarrow\cO_{\cX''}(\cX'')=\cO_{\cX}(\cX)[v/f]=
\cO_X(X)[v'']
\,\,\,\mbox{where}\,\,\,v''=v/f\,.\ee 
(b) Under the assumptions of (b) one has
$\zeta . v''=\zeta^{k-1} v''$ for any $\zeta\in\mu_d$. 

(c) Consider first the case $k=r-1$. Let $I=(z,v)\subset\cO_{\cX}(\cX)$ and $I'=(z,v')\subset\cO_{\cX'}(\cX')$. Under our assumptions one has $\varphi_r^*(I')=I$. By virtue of Lemma \ref{lem: preserving-isomorphisms} the isomorphism $\varphi_r$ lifts to an equivariant isomorphism, say, $\varphi_{r-1}$ of the affine modifications of the cylinders $\cX$ and $\cX'$ along the divisors $z^*(0)$ with the ideals $I$ and $I'$, respectively. By (b) this leads to a commutative diagram 
$$\bdi
\cX(r-1) &\rTo<{\varphi_{r-1}}>{\cong_{\mu_d,B}} & \cX'(r-1) \\
\dTo<{} &&\dTo>{} \\
\cX(r) &\rTo>{\varphi_r}<{\cong_{\mu_d,B}} & \cX'(r)\, 
\edi$$
where the vertical arrows are the corresponding Asanuma modifications of the second kind and $\varphi_{r-1}^*(v'/z)\equiv v/z \mod\, z^{s-1}$. Since the sequences $(\cX(k))_{k\in\ZZ}$ and $(\cX'(k))_{k\in\ZZ}$ are both periodic with period $d$ the recursion on $k$ ends the proof. 
\eproof

\brem\la{rem: as-mod} Let $\rho\colon X'\to X$ be a fibered modification as in Lemma \ref{lem: as-trick}. Consider the  product  modification of cylinders $\sigma=\rho\times\id\colon \cX'\to\cX$ followed by the Asanuma modification of the second kind $\cX''\to\cX'$ with $f=z$. This yields an affine modification $\cX''\to\cX$ 
factorized as in 
 Remark \ref{rem: aff-modif}.2. Identifying $\cX'$ and $\cX''$ via an  isomorphism as in Lemma \ref{lem: As-2d-kind} gives an Asanuma modification of the first kind $\tilde\rho\colon\cX'\to\cX$ 
such that $\rho=\tilde\rho|_{X'\times\{0\}}$. Under this correspondence the conclusions of Lemmas \ref{lem: as-trick}(b) and \ref{lem: As-2d-kind}(c) agree  in the $\mu_d$-equivariant setting. 
\erem

\subsection{Rigidity of cylinders under deformations of GDF surfaces}\label{ss: deformation-equivalent cylinders}
Form Lemma \ref{lem: as-trick} we deduce the following corollary. 

\bcor\label{cor: asan-sequence} \begin{itemize}\item[(a)] Consider a marked GDF $\mu_d$-surface $\pi\colon X\to B$ along with a trivializing $\mu_d$-equivariant sequence {\rm (\ref{eq: seq-aff-modif-final})} of  fibered modifications, see Corollary {\rm \ref{cor: final decomposition}(b)}. 
Given $l\in\{1,\ldots,m\}$ and $k\in\ZZ$ the fibered modification $\rho_l\colon X_l\to X_{l-1}$ as in {\rm (\ref{eq: seq-aff-modif-final})} along the divisor $z^*(0)$ with a center, say, $I_l$ induces a 
$\mu_d$-equivariant Asanuma modification of the first kind
$\tilde\rho_l\colon \mathcal{X}_l(k) \to \mathcal{X}_{l-1}(k+1)$
over $B$ along the divisor $z^*(0)$ on $\mathcal{X}_{l-1}$ with the center $J_l=(I_l,v)$, cf.\  Lemma {\rm \ref{lem: as-trick}}.
\item[(b)] Consequently,
{\rm (\ref{eq: seq-aff-modif-final})} yields a sequence of $\mu_d$-equivariant affine modifications
\be\label{eq: sec-Asanuma-modif} 
\mathcal{X}_{m}(-m)\stackrel{\tilde\rho_m}{\longrightarrow}\mathcal{X}_{m-1}(-m+1)\stackrel{}{\longrightarrow}\ldots\stackrel{}{\longrightarrow}  
\mathcal{X}_{1}(-1)\stackrel{\tilde\rho_1}{\longrightarrow} \mathcal{X}_{0}(0) = (B \times \A^2)(0)\,.\ee 
\end{itemize}
\ecor

\bproof The statement of (a) follows  by Lemma  
\ref{lem: as-trick} and (b) is immediate from (a).
\eproof

The next theorem is the
main result of this subsection.

\bthm\label{thm: GDF-cancellation-fixed-graph} 
Let $\pi\colon X\to B$ and $\pi'\colon X'\to B$ be two 
marked GDF $\mu_d$-surfaces with the same $\mu_d$-quasi-invariant marking  $z\in\cO_B(B)\setminus\{0\}$ of weight $1$.  
Assume that for some trivializing $\mu_d$-equivariant completions
$(\hat X, \hat D)$ and $(\hat X', \hat D')$ of $X$ and $X'$, respectively, 
the graph divisors $\cD(\hat\pi)$ and $\cD(\hat\pi')$ 
are $\mu_d$-equivariantly isomorphic {\rm(}see Definition {\rm \ref{def: graph-divisor}}{\rm)}.
Then for any $k\in\ZZ$ there is a $\mu_d$-equivariant isomorphism  $\mathcal{X}(k)\cong_{\mu_d,B}\mathcal{X}'(k)$.
In particular, 
$\cX(0)\cong_{\mu_d,B}\cX'(0)$.
\ethm

\bproof The trivializing sequences (\ref{eq: sec-Asanuma-modif}) 
associated with the GDF surfaces $X$ and $X'$, respectively,
start both with the same product $\cX_0(0)=(B\times\A^2)(0)=\cX'_0(0)$. Using Proposition \ref{prop: steps} below one shows
by induction on $l$  
 that for any $l=0,\ldots,m$ there is a $\mu_d$-equivariant isomorphism   
$\varphi_l\colon\mathcal{X}_l(-l)\stackrel{\cong_{\mu_d, B}}{\longrightarrow}\mathcal{X}_l'(-l)$.
In particular, for $l=m$ one obtains  an isomorphism $\varphi_m\colon\cX(-m)\stackrel{\cong_{\mu_d, B}}{\longrightarrow}\cX'(-m)$.
Then by Lemma \ref{lem: As-2d-kind}(c) for any  $k\in\ZZ$ one gets a $\mu_d$-equivariant isomorphism $\cX(k)\cong_{\mu_d, B}\cX'(k)$. 
\eproof

The following proposition provides the inductive step in the proof of Theorem \ref{thm: GDF-cancellation-fixed-graph}.

\bprop\label{prop: steps} Under the assumptions of Theorem {\rm \ref{thm: GDF-cancellation-fixed-graph}}
suppose that for some $l\in\{0,\ldots,m-1\}$ there exists a $\mu_d$-equivariant isomorphism
 $\psi_l\colon \cX_l(-l) \stackrel{\cong_{\mu_d,B}}{\longrightarrow}\cX'_l(-l)$  such that
\begin{itemize}\item[(i$_l$)] 
 the induced correspondence between the special fiber components of $\pi_l$ and $\pi'_l$ is the restriction of the  isomorphism $\cD(\hat\pi)\stackrel{\cong}{\longrightarrow}\cD(\hat\pi')$;
 \item[(ii$_l$)]
 $\psi_l^*(v_l')\equiv v_l\, {\rm mod}\, z^s$
where $s>0$ and $v_l$ {\rm (}$v_l'$, respectively{\rm)} is an affine coordinate in the $\A^1$-factor of the cylinder $\cX_l(-l)$ {\rm (}$\cX'_l(-l)$, respectively{\rm)}.
\end{itemize}
Then there exists a $\mu_d$-equivariant isomorphism $\psi_{l+1}\colon\cX_{l+1} (-l-1)\stackrel{\cong_{\mu_d,B}}{\longrightarrow} \cX'_{l+1} (-l-1)$  
such that 
\begin{itemize}\item[(i$_{l+1}$)] 
the induced correspondence between the special fiber components of $\pi_{l+1}$ and $\pi'_{l+1}$ is the restriction of the  isomorphism $\cD(\hat\pi)\stackrel{\cong}{\longrightarrow}\cD(\hat\pi')$;
\item[(ii$_{l+1}$)] $\psi_{l+1}^*(v'_{l+1})\equiv v_{l+1}\, {\rm mod} (z^{s-1})$.
\end{itemize} 
\eprop

\bproof 
The morphism $\rho_{l+1}\colon X_{l+1} \to X_l$ in (\ref{eq: seq-aff-modif-final}) is a $\mu_d$-equivariant affine
modification along the reduced principal divisor $\mathbb{V}(z)=z^*(0)$ on $X_l$ with a reduced center $I$ where $\V(I)$ is the union of a finite set $\Sigma\subset X_l$ and the components of $\mathbb{V}(z)$ disjoint from $\Sigma$, cf.\ Remark \ref{rem: non-complete-intersection}. Notice that $\Sigma$ is contained in the union of  the top level components $F$ of $\mathbb{V}(z)$. Let $\fF_\Sigma$ be the set of the top-level components $\cF=F\times\A^1$ which meet
$\Sigma\times\{0\}$.  By Lemma \ref{lem: as-trick},
$\rho_{l+1}$ induces a $\mu_d$-equivariant Asanuma modification of the first kind $\tilde\rho_{l+1}\colon \cX_{l+1}(-l-1) \to \cX_l(-l)$ with the principal divisor $\mathbb{V}(z) \times \A^1$ and
center $\V(I)\times \{0\} \subset
\cX_l(-l)$ consisting of a $\mu_d$-invariant finite set $\Sigma \times \{0\}$ and a $\mu_d$-invariant union $C$ of curves isomorphic to $\A^1$ such that $C_\cF=C\cap \cF$ is given by equation $v_l=0$ in
each component $\cF=F\times \A^1\notin\fF_\Sigma$. Thus, $C\subset\{z=v_l=0\}$ in $\cX_l(-l)$. For any $\cF \in\fF_\Sigma$ we let 
\be\la{eq: centers} \Sigma_\cF =\cF \cap  (\Sigma \times \{0\}) = \{ x_1, \ldots , x_{M(\cF)} \}\,.
\ee
There is a similar collection of objects related with $X'$ instead of $X$. 
In particular,
one has a modification $\tilde\rho_{l+1}' : \cX_{l+1}'(-l-1) \to \cX_l'(-l)$ with the divisor $\mathbb{V}(z) \times \A^1$ and the
center $\V(I')\times \{0\}$ consisting of a $\mu_d$-invariant finite set $\Sigma' \times \{0\}$ and a $\mu_d$-invariant union $C'$ of curves $C'_{\cF'}\cong \A^1$. 

By virtue of (i$_l$) the $\mu_d$-equivariant isomorphism $\cD(\pi)\stackrel{\cong}{\longrightarrow}\cD(\pi')$  of graph divisors yields a one-to-one correspondence $\cF\rightsquigarrow \cF'$ between the components in $\fF_\Sigma$ and in $\fF'_{\Sigma'}$ so that (see \eqref{eq: centers}) $$M(\cF)=\card\Sigma_\cF=\card\Sigma_{\cF'}=M(\cF')\qquad\forall \cF\in\fF_\Sigma\,.$$

By virtue of (ii$_l$), $\psi_l$ sends the pair $(\cX_l(-l), \mathbb{V}(z) \times \A^1)$ to the pair $(\cX_l'(-l), \mathbb{V}(z) \times \A^1)$ and $C$ to $C'$, but not in general $\Sigma \times \{0\}$ to $\Sigma' \times \{0\}$. 
To get a bijection between the centers $\Sigma$ and $\Sigma'$ of modifications we will replace $\psi_l$ by a composition $\phi_l\circ\psi_l$ with a suitable $\mu_d$-equivariant automorphism $\varphi_l\in\SAut_B\cX'_l(-l)$.

Let $(z,u,v)=(z,u_l,v_l)$ be $\mu_d$-quasi-invariant natural coordinates  in the standard affine chart $U_F\times\A^1$ about $\cF$ where $ \cF\in\fF_\Sigma$, see Definition \ref{def: local coordinates}.  For a point $x_\nu$ in (\ref{eq: centers}) one has $x_\nu=(0,u(x_\nu), 0)$.
Similarly, for $\cF'=F'\times \A^1=\psi_l(\cF)$ consider  the standard affine chart $U_{F'}\times\A^1$  about $\cF'$ with  natural coordinates $(z,u',v')=(z,u_l',v_l')$. Let $$\cF' \cap (\V(I')\times \{0\}) =\cF' \cap  (\Sigma' \times \{0\}) = \{ x'_1, \ldots , x'_{M(\cF)} \}\,$$ where $x'_\nu=(0,u'(x'_\nu), 0)$. 

Let $\mu_e$ with $e=e(\cF)>1$ be the stabilizer of $\cF\in\fF_\Sigma$ in $\mu_d$. Then $M(\cF)\equiv 1\mod\,e$ if $\bar 0\in\Sigma_{\cF}$ and $M(\cF)\equiv 0\mod\, e$ otherwise. Since $\psi_l\colon\cF\to\cF'$ is $\mu_d$-equivariant one has $e(\cF')=e(\cF)$. Since also $M(\cF')=M(\cF)$ it follows that $\bar 0\in\Sigma_{\cF}$ if and only if $\bar 0\in\Sigma_{\cF'}$.

 By  (ii$_l$) one obtains  $$\psi_{l}(x_\nu)=:x_\nu''=(0,u'(x_\nu''),0)\in \cF',\quad\nu=1,\ldots,M(\cF)\,.$$ 
Suppose that $e(\cF)>1$ and $x_\nu=\bar 0\in\Sigma_{\cF}$. Since $\psi_l\colon\cF\to\cF'$ is $\mu_d$-equivariant is sends the orbits to the orbits. It follows that $x_\nu''=\psi_{l}(x_\nu)=\bar 0\in \Sigma_{\cF'}$. Up to renumbering one may assume in this case that $x_\nu'=x_\nu''=\bar 0$. 
 
\smallskip

\noindent {\bf Claim 1.} \emph{There exists a $\mu_d$-equivariant automorphism
$\varphi_l\in\SAut_{\mu_d,B}\cX'_l(-l)$ as in Definition {\rm \ref{bpd2}} with prescribed $\mu_d$-equivariant  $s$-jets in the points $x''_\nu$ chosen so that 
 \begin{itemize} 
 \item[(j)] $\varphi_l(\cF')=\cF'$ for every component $\cF'=F'\times\A^1$ of the divisor $z^*(0)$ on $\cX_l'(-l)$;
 \item[(jj)]   $\varphi_l^*(v')\equiv v'\mod\, z^s$ near $\cF'$  $\forall \cF' \notin\fF_{\Sigma'}$;
  \item[(jjj)] up to  reordering, $\varphi_l(x''_\nu) = x'_\nu$, $\nu=1,\ldots,M(\cF')$  $\forall \cF' \in\fF_{\Sigma'}$;
 \item[(jv)] the $s$-jets of $\varphi_l^*(v')$ and $v'$ at $x_\nu''$ coincide for any $\nu=1,\ldots,M(\cF')$, $\forall \cF' \in\fF_{\Sigma'}$.
\end{itemize}}

\smallskip

\noindent\emph{Proof of Claim $1$}.  Condition (j) holds for any $\varphi\in \SAut_B(\cX'_l)$, cf.\ Proposition \ref{prop: aff-chart}. Due to Theorem \ref{thm: RF} the surface $X'$ verifies the $\mu_d$-equivariant condition ${\rm RF}(l,-l,s)$. Therefore, one can choose $\varphi_l\in\SAut_{\mu_d,B}\cX'_l(-l)$ verifying conditions ($\alpha_1$),  ($\alpha_2$), and ($\beta$) of Definitions \ref{bpd1} and  \ref{bpd2} with a suitable data. This yields (jj), and as well (jjj) and (jv) in the case where either $e(\cF')=1$ or $\bar 0\notin  \Sigma_{\cF'}$.  

In the remaining case one has $e(\cF')>1$ and $\bar 0=x_\nu'\in  \Sigma_{\cF'}$. By the observations preceding the claim one may assume that $x_\nu'=x_\nu''=\bar 0$ and the $s$-jet of $\varphi_l$ at $\bar 0$ is the $s$-jet of the identity. This yields  (jjj) and (jv) also in the remaining case. Now the claim follows. 

\smallskip

Due to (j)--(jjj) the composition $\tilde\psi_l:=\varphi_l\circ\psi_l$ sends the center and the divisor of $\tilde\rho_{l+1}$ to  the center  and the divisor of $\tilde\rho'_{l+1}$.
 By Lemma \ref{lem: preserving-isomorphisms}, $\tilde\psi_l$ lifts to  a $\mu_d$-equivariant isomorphism $\psi_{l+1}\colon \cX_{l+1} (-l-1) \stackrel{\cong_{\mu_d,B}}{\longrightarrow}  \cX'_{l+1} (-l-1)$. The proof ends due to the following 

\smallskip

\noindent {\bf Claim 2.} \emph{$\psi_{l+1}$ satisfies conditions {\rm (i$_{l+1}$)} and  {\rm (ii$_{l+1}$)}.}

\smallskip

\noindent\emph{Proof of Claim $2$}.  Due to conditions (i$_l$) for $\psi_l$ and (j) for $\varphi_l$ the isomorphism $\cD(\hat\pi)_{\le l}\stackrel{\cong}{\longrightarrow}\cD(\hat\pi')_{\le l}$  induced by $\psi_{l+1}$ coincides with the restriction of the given isomorphism $\cD(\hat\pi)\stackrel{\cong}{\longrightarrow}\cD(\hat\pi')$. The same holds for  the induced isomorphism $\cD(\hat\pi)_{\le l+1}\stackrel{\cong}{\longrightarrow}\cD(\hat\pi')_{\le l+1}$ after a suitable renumbering of the points $x_1',\ldots,x'_{M(\cF')}$ on each component $\cF'\in\fF'$. This gives  (i$_{l+1}$).

For any special fiber component $\cF$ in $\cX_{l+1}(-l-1)$ of level $\le l$ condition (ii$_{l+1}$)  holds due to (ii$_l$), (jj), and the equalities $v_{l+1}=v_l/z$, $v'_{l+1}=v'_l/z$. It holds as well for $\cF$ of the top level $l+1$ due to (ii$_l$), (jv), and the same equalities. 
\eproof

\subsection{Rigidity of cylinders under deformations of  $\A^1$-fibered surfaces}\label{ss: def-A1-fibered-surfaces}

Using Theorem \ref{thm: GDF-cancellation-fixed-graph} we obtain our second main result.

\bthm\label{thm: cancellation-fixed-graph} 
Let $\pi\colon Y\to C$ and $\pi'\colon Y'\to C$ be two 
$\A^1$-fibered normal affine surfaces over a smooth 
affine curve $C$. Let $\hat Y\to\hat C$ be an SNC completion of the minimal resolution of singularities of $Y$,
and let $\hat D_{\rm ext}$ be the extended divisor of this completion. Let a pair
(${\hat Y}', \hat D_{\rm ext}')$ plays the same role for $Y'$.
Suppose that \begin{itemize}\item[$\bullet$] the degenerate fibers 
of $\pi$ and $\pi'$ are situated over the same points $p_1,\ldots,p_t\in C$; \item  for  $i=1,\ldots,t$
 the corresponding fiber trees 
$\Gamma_{p_i}(\pi)$ and $\Gamma_{p_i}(\pi')$ 
are isomorphic\footnote{This condition ensures that the corresponding fiber 
components of $\pi$ and 
$\pi'$ have the same multiplicities. Indeed, under our assumptions the isomorphism respects 
the feathers along with 
their bridges.}; \item making similar contractions in $\hat D_{\rm ext}$ and $\hat D_{\rm ext}'$ one can
reduce both $\hat Y$ and ${\hat Y}'$ to the product $\hat C\times \PP^1$
with the same distinguished "section at infinity" $\hat C \times \{\infty\}$.\end{itemize} 
Then the cylinders 
$Y\times\A^1$ and $Y'\times\A^1$
are isomorphic over $C$.
\ethm

\bproof Applying a suitable cyclic Galois base change 
$B\to C$ of order $d$ ramified over the  points $p_1,\ldots,p_t\in C$ one can replace the $\A^1$-fibered 
surfaces $\pi\colon Y\to C$ and $\pi'\colon Y'\to C$ by two marked GDF $\mu_d$-surfaces $X\to B$ and $X'\to B$, 
respectively, with the same $\mu_d$-quasi-invariant marking $z\in\cO_B(B)\setminus\{0\}$, see Lemma \ref{lem: br-covering} and Remark \ref{rem: GDF-cyl}. 
Due to our assumptions the extended graphs and the fiber trees of the special fibers of suitable $\mu_d$-equivariant  pseudominimal completions $\bar X$ and $\bar X'$ of the GDF surfaces $X$ and $X'$ are 
isomorphic under a $\mu_d$-equivariant isomorphism.\footnote{Such completions are not unique. However,  the deformation parameters are irrelevant for the combinatorial invariants such as the extended graph and the fiber trees.  } Moreover, these surfaces admit trivializing $\mu_d$-equivariant completions $\hat X$ and $\hat X'$, respectively, verifying the assumptions of Theorem \ref{thm: GDF-cancellation-fixed-graph}.
Due to this theorem  there is a $\mu_d$-equivariant isomorphism 
$\mathcal{X}(0)\cong_{\mu_d,B}\mathcal{X}'(0)$.
Passing to the quotients $\mathcal{X}(0)/\mu_d=
Y\times\A^1$ and $\mathcal{X}'(0)/\mu_d=Y'\times\A^1$ 
yields a desired $C$-isomorphism
$Y\times\A^1\cong_C Y'\times\A^1$. 
\eproof 

\bcor\la{cor: moduli-cyl} Let $C$ be a smooth affine curve with marked points $p_1,\ldots,p_t\in C$. Consider the collection $\fH=\fH(C,p_1,\ldots,p_t)$ of all the $\A^1$-fibered normal affine surfaces $\pi\colon X\to C$ such that $\pi$ restricted over $C\setminus\{p_1,\ldots,p_t\}$ is the projection of a trivial line bundle. 
Then the set of $C$-isomorphism classes of cylinders $\cX=X\times\A^1$, where $X$ runs over $\fH$, is at most countable. 
\ecor

\bproof Indeed, the set of the isomorphism classes of finite trees is countable. The same is true for the set of all ordered $t$-tuples of such trees as in Theorem \ref{thm: cancellation-fixed-graph}. Now the assertion follows from this theorem. \eproof

\brem\la{rem: non-A1-base} Let $\pi\colon X\to C$ and $\pi'\colon X'\to C'$ be $\A^1$-fibered normal affine surfaces.  If $C\not\cong\A^1$ then any isomorphism  of cylinders $\varphi\colon\cX\stackrel{\cong}{\longrightarrow}\cX'$ fits in a commutative diagram 
$$\bdi
\cX &\rTo<{\varphi}>{\cong} & \cX' \\
\dTo<{} &&\dTo>{} \\
C &\rTo>{\psi}<{\cong} & C'\, 
\edi$$ where $\psi$ is an isomorphism
(cf.\ also Lemma \ref{lem: mult-fiber}).
For instance, the isomorphism type of the cylinder $\cX$ over the surface $X=(\A^1\setminus \{k\,\,\mbox{points}\})\times\A^1$ depends essentially on the isomorphism type of the factor $\A^1\setminus \{k\,\, \mbox{points}\}$ (see \cite[9.10.1]{Fu1}). 
 \erem

\subsection{Rigidity of line bundles over affine surfaces} In unpublished notes \cite{BML5} kindly provided to us by the authors the study of cylinders over affine surfaces is extended to the total spaces of line bundles over affine surfaces. 
Theorem \ref{thm: GDF-cancellation-fixed-graph-T} below is an analog of Theorem \ref{thm: GDF-cancellation-fixed-graph}  in this wider context. We do not use this extension in the sequel, so, we just indicate the necessary modifications in the proof of Theorem \ref{thm: GDF-cancellation-fixed-graph}. 

\bnota\label{gan1}
Let  $X$ be
an affine algebraic variety. For a Cartier divisor $T\in\CDiv X$ 
we let  $\pi^T\colon \cX^T\to X$ be the associated line bundle on $X$ with a zero section $Z^T\subset \cX^T$.
\enota

\bdefi\la{def: as-mod-1-kind-T} Let $D\in\CDiv X$ be a reduced effective  Cartier divisor on $X$. By an {\em Asanuma modification of the second kind} of $\cX^T$ we mean an  affine modification $\sigma^D\colon \cX^{T,D}\to \cX^T$ along the principal divisor $\cD^T=(\pi^T)^*(D)$ on $\cX^T$
with the center $\cD^T\cdot Z^T$.
\edefi

We have the following analogue of Lemma \ref{lem: as-trick} (corresponding to the case $T\sim 0$).

\blem\label{lem: graph-change0}
In Notation {\rm \ref{gan1}}, $\pi^{T,D}=
\pi^T\circ\sigma^D \colon \cX^{T,D}\to X$ admits a structure of a line bundle such that $\cX^{T,D}\cong_X \cX^{T-D}$.
\elem

\bproof Choose an open covering $X= \bigcup_i U_i$ such that 
\begin{itemize}\item $D\cap U_i=f_i^*(0)$ and $T\cap U_i=\div h_i$ where $f_i \in \cO_{U_i}(U_i)$ and $h_i\in \Frac \cO_{U_i} (U_i)$.  \end{itemize} Then \begin{itemize} \item $\alpha_{i,j}=f_j/f_i, \,\beta_{i,j}=h_j/h_i\in \cO^\times_{U_{i,j}}(U_{i,j})$ where $U_{i,j}=U_i \cap U_j$, are \v{C}ech $1$-cocycles on $X$ associated with the line bundles $\cX^{D}\to X$ and $\cX^{T}\to X$, respectively.
\end{itemize}
Letting $V_i =(\pi^T)^{-1}(U_i)$ there are local trivializations $V_i \cong_{U_i}U_i\times \A^1$ of $\pi^T\colon \cX^T\to X$ where $\A^1=\Spec \mathbb{k}[v_i]$ with  $v_j=\beta_{i,j}v_i$ over $U_{i,j}$. 
Consider the restriction $V_i' \to V_i$ of the morphism $\sigma\colon \cX^{T,D} \to X^T$ over $V_i$ 
induced by the natural inclusion
$$\cO_{V_i}(V_i)\hookrightarrow \cO_{V'_i}(V'_i)=\cO_{V_i}(V_i)[v_i/f_i]\,.$$ 
One has $V_i'\cong_{U_i} U_i'\times \A^1$ where $\A^1=\Spec \mathbb{k}[v'_i]$ with $v'_i=v_i/f_i$. This defines local trivializations of the projection $\pi^{T,D}\colon \cX^{T,D}\to X$, hence a structure of a line bundle on $\cX^{T,D}$ over $X$.
Note that $v'_j =(\alpha_{i,j}^{-1}\beta_{i,j}) v'_i$ where $\{ \alpha_{i,j}^{-1}\beta_{i,j} \}$ is a \v{C}ech $1$-cocycle on $X$
associated with the line bundle $\pi^{T-D}\colon\cX^{T-D}\to X$.
\eproof

\bnota\la{not: character} Let $X$ be an affine variety acted upon by a  finite group G, and let $T,D\in\Div (X)$ be $G$-invariant divisors where $D$ is reduced. Then the line bundle $\cX^T\to X$ admits a $G$-linearization, that is, a structure of a $G$-equivariant line bundle. 
This structure is not unique, in general. It is defined modulo the multiplication by a character, see, e.g., \cite{MFK}. Choosing a $G$-linearization, say, $\cX^T(1)\to X$ with the corresponding   linear equivariant $G$-action $\phi\colon (g,v)\mapsto g.v$ on $\cX^T$, for
a  character $\chi\in G^\vee$ consider a new such action $\phi^\chi\colon (g,v)\mapsto \chi(g)\cdot g.v$. This yields a new $G$-linearization denoted by $\cX^T(\chi)\to X$. 

 In the case of a cyclic group $G=\mu_d$,
fixing a primitive
character $\chi$ of $\mu_d$ we write $\cX^T(k)\to X$ for the $\mu_d$-linearization on $\cX^T\to X$ associated with the character $\chi^k$. With this notation, $\cX^T(0)\to X$ corresponds to the given $G$-linearization. Clearly, the sequence $(\cX^T(k))_{k\in\Z}$ is periodic with period $d$. For any $G$-invariant divisors $T_1,T_2\in\Div (B)$ and any characters $\chi,\lambda\in G^\vee$ there is a $G$-equivariant isomorphism $\cX^{T_1}(\chi)\otimes \cX^{T_2}(\lambda)\cong_X \cX^{T_1+T_2}(\chi\lambda)$. 
\enota

In the sequel we need the following simple lemma. 

\blem\la{lem: lb-over-curve} Let $B$ be a smooth affine curve acted upon by a finite group $G$, and let $\xi: L\to B$ be a line bundle over $B$ which admits a $G$-linearization. 
Then for any $b_1,\ldots,b_n\in B$ there are a $G$-invariant open set $U$ 
containing these points and a $G$-equivariant trivialization of $\xi|_U$. 
\elem

\bproof It suffices to find a nonzero $G$-stable (that is, $G$-equivariant) rational section $s\colon B\to L$ of $\xi$ which has neither pole nor zero in $b_1,\ldots,b_n$ and to set $U=B\setminus \supp(\div s)$. Given any nonzero $G$-stable rational section $s_0\colon B\to L$ of $\xi$ one can find a $G$-invariant rational function $f\neq 0$ on $B$ such that $\div f$ restricts to $\div s$ on $b_1,\ldots,b_n$. Then $s=s_0/f$ is a desired $G$-stable section  of $\xi$.\eproof

\bnota\label{gan-1} Let $\pi\colon X\to B$ be a marked GDF $\mu_d$-surface over a smooth affine curve $B$ with a $\mu_d$-quasi-invariant marking $z\in\cO_B(B)\setminus\{0\}$ of weight 1. Then the principal divisor $D=z^*(0)\in\Div (B)$ is $\mu_d$-invariant. Given a $\mu_d$-invariant divisor $T\in\Div (B)$  consider the line bundle $\cX^{T^*}\to X$ where $T^*=\pi^*(T)\in\Div (X)$. By abuse of notation we let  $\cX^{T}=\cX^{T^*}$. If $\xi\colon L\to B$ is the line bundle associated with $T$ then $\cX^{T}\to X$ is induced by $\xi$ via the morphism $\pi\colon X\to B$. Hence both $\xi$ and $\cX^T\to X$ admit $\mu_d$-linearizations such that the natural morphism $\cX^T\to L$ is $\mu_d$-equivariant. Choosing such a $\mu_d$-linearization of $\xi$ 
and the one of $\cX^T\to X$ we observe that $L(k)$ naturally corresponds to $\cX^T(k)$. \enota

There is the following equivariant version of Lemma \ref{lem: graph-change0}.

\blem\label{gal1} Let things be as in {\rm \ref{gan-1}}. Then for any $k\in\Z$ there exists a $\mu_d$-action on 
$\cX^{T,D}$ and a $\mu_d$-equivariant 
isomorphism of line bundles $\cX^{T,D}\cong_{\mu_d,B} \cX^{T-D}(k-1)$ such that the induced morphism $\sigma^D\colon\cX^{T-D}(k-1)\to\cX^T(k)$ is $\mu_d$-equivariant.
\elem

\bproof The $\mu_d$-action on $\cX^T(k)$ stabilizes the divisor $\cD^T=(\pi^T)^*(D)\in\Div (\cX^T)$ and the
center $\cD^T\cdot Z^T$ of the affine modification $\sigma^D\colon\cX^{T,D}\to\cX^T(k)$. By \cite[Cor.\ 2.2]{KZ} (see Lemma \ref{lem: preserving-isomorphisms}) it lifts to a $\mu_d$-action on $\cX^{T,D}$ making $\sigma^D$ equivariant. 

Choose a trivializing open set $U\subset B$ as in Lemma \ref{lem: lb-over-curve}, and let $V=\pi_X^{-1}(U)\subset X$. Then $\cX^T(k)\to X$ admits over $V$ a $\mu_d$-equivariant trivialization $\cX^T(k)|_V\cong_{\mu_d,V} (V\times\A^1)(k+m)$ where $\A^1=\Spec \mathbb{k}[v]$ and $m$ is the weight of an equivariant trivialization of $\cX^T(0)|_V$.  Recall that $D=\div z$ where $z$ has weight 1 and there is a natural isomorphism $\cX^{T,D}|_V\cong_V V\times\A^1$ where $\A^1=\Spec \mathbb{k}[v/z]$ compatible with an isomorphism $\cX^{T,D}\cong_{B} \cX^{T-D}$ of Lemma \ref{lem: graph-change0} (see the proof of this lemma). This gives an  equivariant trivialization $\cX^{T-D}|_V\cong_{\mu_d,V} (V\times\A^1)(k+m-1)$ and shows that the induced $\mu_d$-action
on $\cX^{T-D}$ has weight $k-1$. Now the assertions follow. \eproof

The following result is an analog of Theorem \ref{thm: GDF-cancellation-fixed-graph} in our more general setting. 

\bthm\label{thm: GDF-cancellation-fixed-graph-T} 
Let $\pi_X\colon X\to B$ and $\pi_Y\colon Y\to B$ be two 
marked GDF $\mu_d$-surfaces over $B$ with the same $\mu_d$-quasi-invariant marking  $z\in\cO_B(B)\setminus\{0\}$ of weight $1$.  
Assume that for some trivializing $\mu_d$-equivariant completions
$(\hat X, \hat D_X)$ and $(\hat Y, \hat D_Y)$
the graph divisors $\cD(\hat\pi_X)$ and $\cD(\hat\pi_Y)$ 
are $\mu_d$-equivariantly isomorphic. Let $T\in\Div (B)$ be a $\mu_d$-invariant divisor. 
Then for any $k\in\ZZ$ there is a $\mu_d$-equivariant isomorphism
$\cX^{T}(k)\cong_{\mu_d,B}\cY^{T}(k)$.
\ethm

In the proof we use an analog of the Asanuma modification of the first kind for line bundles over surfaces (see Definition \ref{def: as-mod-T} below). Let us introduce the following notation.

\bnota\label{gan-2} Let $z^{-1}(0)=\{b_1,\ldots,b_n\}\subset B$. Consider a trivializing sequence (\ref{eq: seq-aff-modif-final}) of fibered modifications $\rho_{l+1}\colon X_{l+1}\to X_l$, $l=0,\ldots,m$. Let $T$ be a $\mu_d$-invariant divisor on $B$, and let $f$ be a rational $\mu_d$-quasi-invariant function on $B$ such that $(\div f)(b_i)=- T(b_i)$, $ i=1,\ldots,n$. Then $T$ and $T+\div f$ represent the same class in $\Pic B$.
Replacing $T$ by $T+\div f$ we may assume that $b_i\notin\supp T$ $\forall i=1,\ldots,n$. For every $l=0,\ldots,m$ we let $T_l=\pi_l^*(T)\in\Div (X_l)$. Since $T_{l+1}=\rho_{l+1}^*(T_l)$
the modification $\rho_{l+1}\colon X_{l+1}\to X_l$ induces an affine 
modification $\rho^T_{l+1}\colon\cX_{l+1}^{T_{l+1}}\to \cX_l^{T_l}$ which fits in the commutative
diagram  

\smallskip

\be\la{diag: cub}\begin{diagram}[notextflow]
  \cX_{l+1}^{T_{l+1}-D_{l+1}}  &    &\rTo^{\rho^{T-D}_{l+1}} &      &    \cX_{l}^{T_{l}-D_{l}}   \\
      & \rdTo_{\pi^{T-D}_{l+1}} &      &      & \vLine^{}& \rdTo^{\pi^{T-D}_{l}}  \\
\dTo^{\sigma_{l+1}^D} &    &   X_{l+1}   & \rTo^{\rho_{l+1}}  & \HonV   &    &  X_{l}  \\
      &    & \dTo^{\rm id}  &      & \dTo_{\sigma_{l}^D}   \\
   \cX_{l+1}^{T_{l+1}}  & \hLine & \VonH   & \rTo_{\rho^T_{l+1}} &  \cX_{l}^{T_{l}}   &    & \dTo_{\rm id} \\
      & \rdTo_{\pi^{T}_{l+1}} &      &      &      & \rdTo^{\pi^{T}_{l}}  \\
      &    &   X_{l+1}   &      & \rTo_{\rho_{l+1}}  &    &  X_{l}  \\
\end{diagram}\ee
\medskip

\noindent For a fiber component $F_i\subset D_l$ we let
$C_i$  be the intersection  of $F_i$ with the center of the modification $\rho_{l+1}\colon X_{l+1}\to X_l$. 
Then $\rho^T_{l+1}\colon \cX_{l+1}^{T_{l+1}} \to   \cX_l^{T_l}$ is an affine modification along the divisor $\cD_l=\bigcup_i \cF_i$ with the center $\cC_l=\bigcup_i\cC_i$ where
$\cF_i=(\pi^{T_l})^{-1}(F_i)\cong F_i \times \A^1\cong\A^2$ and  
$\cC_i\cong C_i \times \A^1\subset F_i \times \A^1$. There is an alternative:  either
\begin{itemize}\item[(i)] $C_i$ is finite, or
 \item[(ii)] $C_i=F_i$.\end{itemize} In case (i), $F_i$ is a top level  component. In case (ii), $X_{l+1} \to   X_l$
($\cX_{l+1}^{T_{l+1}} \to   \cX_l^{T_l}$, respectively) is an isomorphism near  $F_i$ (near $\cF_i$, respectively).

\enota

\bdefi\la{def: as-mod-T} We call an \emph{Asanuma modification of the first kind} the birational morphism 
$$\kappa_{l+1} \colon \cX_{l+1}^{T_{l+1}-D_{l+1}}\to \cX_l^{T_l}\,$$
where $\kappa_{l+1}=\rho_{l+1}^{T}\circ\sigma^D_{l+1}=\sigma_{l}^D\circ\rho_{l+1}^{T-D}$ is the diagonal composition
of morphisms in the back square of (\ref{diag: cub}). 
Then $\kappa_{l+1}$ is an affine modification along the divisor $\cD_l=\bigcup_i \cF_i$ on $\cX_l^{T_l}$
with the center $\bigcup_i (C_i \times \{0\})$.
In case (i), $C_i \times \{0\}\subset \cC_i\cong \A^2$ is zero-dimensional, while in case (ii)
this is just the coordinate axis $v=0$ in $\cC_i\cong \A^2$. Due to Lemma \ref{gal1} and by analogy with  (\ref{eq: sec-Asanuma-modif}) 
one has the following sequence of equivariant Asanuma modifications of the first kind:
\be\label{eq: sec-Asanuma-modif-1} 
\mathcal{X}^{T_m-mD_m}_{m}(-m)\stackrel{\tilde\rho_m}{\longrightarrow}
\ldots\stackrel{}{\longrightarrow} \mathcal{X}^{T_2-2D_2}_{2}(-2) \stackrel{}{\longrightarrow}  
\mathcal{X}^{T_1-D_1}_{1}(-1)\stackrel{\tilde\rho_1}{\longrightarrow} \mathcal{X}^{T_0}_{0}(0)\,.\ee 
\end{defi}

\medskip

\noindent \emph{Proof of Theorem} \ref{thm: GDF-cancellation-fixed-graph-T}. For the given GDF surfaces $\pi_X\colon X\to B$ and $\pi_Y\colon Y\to B$, consider the corresponding sequences (\ref{eq: sec-Asanuma-modif-1}) starting with the same line bundle $\cX^{T_0}_0(0)=(B\times\A^1)^{T_0}(0)=\cY^{T_0}_0(0)$. One may suppose that $\mu_d$ acts trivially on the factor $\A^1$. 
Using Proposition \ref{gal2} below with 
$s>m$
it follows by induction that for $l=0,\ldots,m$ there is a (non-linear, in general) $\mu_d$-equivariant isomorphism
$$\mathcal{X}^{T_l-lD_l}_l(-l)\cong_{\mu_d,B}\mathcal{Y}^{T_l-lD_l}_l(-l)\,$$ which sends the zero section $Z(\mathcal{X}^{T_l-lD_l}_l(-l))$ of the first line bundle to such a section of the second one. Replacing $T$ by $T+mD$ one obtains  for $l=m$,
$$\cX^{T}(-m)=\cX^{T_m}_{m}(-m)\stackrel{\phi}{\longrightarrow}\cY^{T_m}_{m}(-m)=\cY^{T}(-m)\,,$$ where $\phi$ is a $(\mu_d,B)$-isomorphism  respecting the zero sections $Z(\cX^{T}(-m))$ and $Z(\cY^{T}(-m))$ and the divisors $\cD^T(\cX^{T})$ and $\cD^T(\cY^{T})$. Hence $\phi$ respects also the centers $\cD^T(\cX^{T})\cdot Z(\cX^{T}(-m))$ and $\cD^T(\cY^{T})\cdot Z(\cY^{T}(-m))$ of the Asanuma modifications of the second kind. Applying these modifications on both sides, by Lemma \ref{gal1} we decrease by 1 the weights of the $\mu_d$-actions. Due to Lemma \ref{lem: preserving-isomorphisms}, $\phi$ admits a lift to a $(\mu_d,B)$-isomorphism $\tilde\phi$ fitting in the commutative diagram  
\be\la{diag: square}\begin{diagram}[notextflow]
  \cX^{T-D}(-m-1)  &  \rTo^{\tilde\phi}_{\cong_{\mu_d,B}} &    \cY^{T-D}  (-m-1) \\
\dTo^{\sigma^D} &                           &     \dTo^{\sigma^D} \\  \cX^T(-m)            &  \rTo_{\phi}^{\cong_{\mu_d,B}}        &  \cY^T(-m)  \\
\end{diagram}\ee
and respecting the zero sections. Choose $n\ge 1$ such that $-(m+n)\equiv k\mod\, d$. For $s\gg 1$ after $n$ iterations one arrives at an isomorphism $\cX^{T-mD}(k)\cong_{\mu_d, B}\cY^{T-nD}(k)$. This holds for an arbitrary $\mu_d$-stable divisor $T\in\Div (B)$. Replacing the initial $T$ by $T+nD$ one gets an isomorphism
$\cX^{T}(k)\cong_{\mu_d, B}\cY^{T}(k)$, as required.\qed 

\medskip

In the proof we have used
 the following analog of Proposition \ref{prop: steps}. By abuse of notation we let $v_i$ and $\tilde v_i$ be the local fiber coordinates of the line bundles $\cX_l^T\to X$ and $\cY_l^T\to Y$, respectively.

\bprop\label{gal2} Under the assumptions of Theorem {\rm\ref{thm: GDF-cancellation-fixed-graph-T}} let $$\psi_l \colon \cX^T_l (-l) \stackrel{\cong_{\mu_d, B}}{\longrightarrow} {\cY^T_l} (-l)$$ be a $\mu_d$-equivariant isomorphism
such that
\begin{itemize}\item[{\rm (i$_l$)}] $\psi_l^*(\tilde v_i)\equiv v_i\, \mod\, z^s$ $\forall i$.\end{itemize}
Then there exists a $\mu_d$-equivariant isomorphism $$\psi_{l+1} \colon \cX^{T-D}_{l+1} (-l-1) \stackrel{\cong_{\mu_d,B}}{\longrightarrow} 
{\cY^{T-D}_{l+1}} (-l-1)$$
such that 
\begin{itemize}
\item[{\rm (i$_{l+1}$)}] $\psi_{l+1}^*(\tilde v_i)\equiv v_i\, \mod\, z^{s-1}$ $\forall i$.\end{itemize}
\eprop

\noindent \emph{Hint}. The proof of  Proposition \ref{prop: steps} goes verbatim modulo the existence of an automorphism $\phi$ which is guaranteed by Theorem \ref{thm: RF}.  Thus, it suffices to prove the following  analog of Theorem \ref{thm: RF}. 

\bthm\la{thm: RF-T} Let a GDF $\mu_d$-surface $\pi_X\colon X\to B$, $z\in\cO_B(B)$, and $T\in\Div (B)$ be as in Theorem {\rm\ref{thm: GDF-cancellation-fixed-graph-T}}. Then $\cX^T$ satisfies  an analog of the $\mu_d$-equivariant condition RF$(l,-l,s)$.
\ethm

\bproof It suffices to reproduce mutatis  the proof of Theorem \ref{thm: RF} (see Section \ref{ss: rtr}). The  modifications are as follows. 

The coordinate $v$ used when working with cylinders might do not exist on the total space of the line bundle $\pi^T\colon\cX^T\to X$. Hence one cannot consider on $\cX^T$ the locally nilpotent derivations $\tilde\sigma_{1,f}$ and $\tilde\sigma_{2,g}$ as in (\ref{eq: sigmas}). 
However, one can employ instead their analogs which coincide with these up to a given order on any special fiber component $\cF_i=(\pi^T)^{-1}(F_i)$ in $\cX^T$. 

Indeed,  let $\xi: L\to B$ be the line bundle associated with $T$, and let $U\subset B$ be a $\mu_d$-stable dense open subset  as in Lemma \ref{lem: lb-over-curve} which contains $z^{-1}(0)=\{b_1,\ldots,b_n\}$ and such that $\xi|_U$ is trivial as a $\mu_d$-line bundle. Then also the induced line bundle $\pi^T\colon\cX^T\to X$ is trivial over $V=\pi^{-1}(U)\subset X$. 
Thus, $\cX^T|_V\cong_{\mu_d,V} V\times\A^1$ where $\A^1=\Spec \mathbb{k}[v]$. Via this isomorphism, $v$ yields a rational $\mu_d$-quasi-invariant function on $\cX^T$ which we denote  by the same letter. 

Choose a regular $\mu_d$-quasi-invariant function $h\in\cO_B(B)$ such that $h-1\equiv 0 \mod\, z^s$ and $h|_{B\setminus U}= 0$. Consider  the lift $\tilde h\in\cO_{\cX^T}(\cX^T)$ of $h$. For $s\gg 1$ the product $\tilde  v=\tilde h^sv\in\cO_{\cX^T}(\cX^T)$ is a regular $\mu_d$-quasi-invariant which coincides with $v$ to order $s$ on any special fiber component $\cF_i=(\pi^T)^{-1}(F_i)$ in $\cX^T$. Letting  $\p^*_l=(\pi^T)^*(\p_l)$ and $\hat\sigma_{1,f}=
f(\tilde v^d)\p^*_l$ for $f\in \mathbb{k}[t]$ yields a $\mu_d$-invariant locally nilpotent derivation on $\cO_{\cX^T}(\cX^T)$ which coincides to order $s$ with $\tilde\sigma_{1,f}$  on any  fiber component $\cF_i$. 

Furthermore, for $s\gg 1$ the product $\tilde h^{d+1}\p/\p v$ is a $\mu_d$-invariant locally nilpotent derivation on $\cO_{\cX^T}(\cX^T)$. Letting $\hat\sigma_{2,g}=\tilde u^{ds}g(\tilde u^d) \tilde h^{d}\p/\p v$ for $g\in \mathbb{k}[t]$ where $\tilde u$ is as defined in \ref{nota: q-inv-grps} yields a $\mu_d$-invariant locally nilpotent derivation on $\cO_{\cX^T}(\cX^T)$ which coincides  to order $s$ with $\tilde\sigma_{2,g}$  on any  fiber component $\cF_i$.

Using the locally nilpotent derivations $\hat\sigma_{1,f}$ and $\hat\sigma_{2,g}$ instead of $\tilde\sigma_{1,f}$ and $\tilde\sigma_{2,g}$, respectively, the rest of the proof of Theorem  \ref{thm: RF} applies and gives the desired $\mu_d$-equivariant relative flexibility.
\eproof

\section{Basic examples of Zariski factors} \label{sec: basic-examples}
\subsection{Line bundles  over affine curves}\label{ss: Line bundles}

\bprop\label{prop: line-bundle} Let $\pi\colon X\to B$ be a line bundle over a smooth affine curve $B$. Then the surface $X$ is a Zariski factor. 
\eprop

\bproof If $B\cong\A^1$ then $\pi\colon X\to B$ is a trivial line bundle, and so, $X\cong \A^2$ is a Zariski factor by the Miyanishi-Sugie-Fujita Theorem (\cite{Fu, MS}; see also \cite[Ch.\ 3, Thm.\ 2.3.1]{Mi}). 

Suppose further that $B\not\cong\A^1$, and so, any morphism $\A^1\to B$ is constant. 
Consider a second smooth affine surface $X'$ and the cylinder $\cX'=X'\times\A^n$.  Assume that there is an isomorphism $\varphi\colon \cX'\stackrel{\cong}{\longrightarrow}\cX$. The  structure of a vector bundle of $\tilde\pi:={\rm pr}_1\circ\pi\colon\cX\to B$ is transferred by $\varphi^{-1}$ to such a structure on $\cX'$ with the projection $\tilde\pi'=\tilde\pi\circ\varphi\colon\cX'\to B$. This yields the commutative diagram
\be\label{diagr: case-02}
 \bdi
\cX'&\rTo<{\phi}>{\cong_{B}} &  \cX \\
\dTo<{\tilde\pi'} &  & \dTo>{\tilde\pi} \\
B &\rTo>{\id} & B \, 
\edi
\ee
 Since any morphism $\A^1\to B$ is constant, $\tilde\pi'$ admits a factorization 
\be\label{eq: factorization}
\tilde\pi'\colon \cX'\stackrel{{\rm pr}_1}{\longrightarrow} X'\stackrel{\pi'}{\longrightarrow} B\,.
\ee
Letting $F_b=\pi^{-1}(b)$ and $F_b'={\pi'}^{-1}(b)\subset X'$ where $b\in B$, $\phi$ restrits to an isomorphism $$\phi|_{F_b'\times\A^n}\colon F_b'\times\A^n\stackrel{\cong}{\longrightarrow} F_b\times\A^n\cong\A^{n+1}\,.$$ Since any curve is a Zariski factor (\cite[Thm.\ 6.5]{AHE}) one deduces that $F_b'\cong F_b\cong\A^1$ $\forall b\in B$.
Thus, $\pi'\colon X'\stackrel{}{\longrightarrow} B$ is an $\A^1$-fibration with all the fibers being reduced and irreducible, because the fibers of $\tilde\pi'\colon\cX'\to B$ are. Therefore, $\pi'\colon X'\stackrel{}{\longrightarrow} B$ admits a structure of a line bundle. Now the existence of an isomorphism $X'\cong X$ follows from the next lemma where we let $d=1$.
\eproof

We work in an equivariant setup that we will use below. 

\blem\label{lem: line-bundle-equiv}  Consider two GDF $\mu_d$-surfaces $\pi\colon X\to B$ and $\pi'\colon X'\to B$ with only irreducible fibers. 
Extend the $\mu_d$-actions to the cylinders $\cX=X\times\A^n$ and $\cX'=X'\times\A^n$ by the identity on the second factor. Suppose that there is a $\mu_d$-equivariant isomorphism $\Phi\colon\cX\stackrel{\cong_{\mu_d,B}}{\longrightarrow} \cX'$ over $B$.
Then there exist $\mu_d$-equivariant line bundle structures $\xi$ and $\xi'$ on $X$ and $X'$ with projections $\pi$ and $\pi'$, respectively, and a $\mu_d$-equivariant isomorphism of line bundles $\xi\cong_{\mu_d,B}\xi'$ identical on $B$. 
\elem 

\bproof The average of an arbitrary section  of $\pi$ upon the $\mu_d$-action on $X$ yields a $\mu_d$-invariant section  of $\pi$ and, respectively, a $\mu_d$-equivariant structure of a line bundle $\xi$ on $X$ with projection $\pi$. Similarly, $X'$ admits a $\mu_d$-equivariant line bundle structure $\xi'$ with projection $\pi'$. Let us show that there exists an isomorphism of line bundles $\xi\cong_{\mu_d,B}\xi'$.

The cylinder $\cX=X\times\A^n$ inherits a structure of a vector bundle of rank $n+1$ isomorphic to the Whitney sum $\xi\oplus \bf {1_n}$ with projection $\tilde\pi\colon\cX\to B$ where $\bf {1_n}$ stands for the trivial vector bundle of rank $n$ over $B$. Similarly, $\tilde\pi'\colon\cX'\to B$ represents the vector bundle $\xi'\oplus \bf {1_n}$.
The $\mu_d$-equivariant isomorphism of the total spaces 
$$\Phi\colon \cX={\rm tot} (\xi\oplus \bf {1_n})\stackrel{\cong_{\mu_d,B}}{\longrightarrow} \cX'={\rm tot} (\xi'\oplus \bf {1_n})$$ 
sends the zero section $Z$ of $\xi\oplus \bf {1_n}$ to a section, say, $Z''$ of  $\xi\oplus \bf {1_n}$. Both $Z$ and $Z''$ are $\mu_d$-invariant. It is easily seen that the translation $t_{-Z''}$ on $-Z''$ in $\xi'\oplus \bf {1_n}$ is $\mu_d$-equivariant. Hence the composition  $\Psi=t_{-Z''}\circ\Phi$ is as well, and it sends $Z$ to the zero section $Z'$ of 
$\xi'\oplus \bf {1_n}$. The differential $d\Psi|_{Z}$ yields a $\mu_d$-equivariant isomorphism of the normal bundles $\cN_{Z/\cX}\cong_{\mu_d,B}\cN_{Z'/\cX'}$. Furthermore,  
$\cN_{Z/\cX}\cong_{\mu_d, B}  \xi\oplus {\bf 1_n}$ and $\cN_{Z'/\cX'}\cong_{\mu_d,B} \xi\oplus {\bf 1_n}$ as $\mu_d$-vector bundles. Therefore, one has $\xi\oplus {\bf 1_n}\cong_{\mu_d,B} \xi'\oplus {\bf 1_n}$, that is, the  line bundles $\xi$ and $\xi'$ are stably $\mu_d$-equivariantly equivalent. In fact, they are $\mu_d$-equivariantly equivalent. Indeed, one has (see \cite[\S 8, Corollary]{Se1}) $$\xi\cong_{\mu_d, B}\det(\xi\oplus {\bf 1_n})\cong_{\mu_d,B} \det(\xi'\oplus {\bf 1_n})\cong_{\mu_d,B}\xi'\,,$$   as stated. 
\eproof

\subsection{Parabolic $\G_m$-surfaces: an overview}\label{ss: parabolic-surfaces}
\bdefis[\emph{the DPD presentation for parabolic $\mathbb{G}_m$-surfaces}]\label{sit: toric-surfaces} (\cite{FZ}) A {\em parabolic $\mathbb{G}_m$-surface} is a normal affine  surface  $X$ endowed with an effective $\mathbb{G}_m$-action along the fibers of an $\A^1$-fibration $\pi\colon X\to C$ over a smooth affine curve $C$. The $\mathbb{G}_m$-action on $X$ defines a grading 
$$\cO_X(X)=\bigoplus_{n\ge 0} A_n\quad\mbox{where}\quad A_n=H^0(C,\cO_C(\lfloor nD_X\rfloor)\,\,\,\forall n\ge 0$$ for a $\Q$-divisor $D_X$ on $C$. 
This is called a {\em Dolgachev-Pinkham-Demazure  presentation}, or a {\em DPD presentation} for short, see \cite[Thm.\ 3.2]{FZ}.
The $\Q$-divisor $D_X$ on $C$ is uniquely defined by the class of isomorphism of $\pi\colon X\to C$ up to a linear equivalence. 
Any fiber $\pi^*(p)$, $p\in C$, is irreducible of multiplicity $m$ where
$D_X(p)=(e/m)[p]$ with coprime $e,m\in\Z$. Any reduced fiber $\pi^{-1}(p)$ is smooth and isomorphic to $\A^1$  (\cite[Rem.\ 3.13(iii)]{FZ-lnd}). The  projection $\pi\colon X\to C$ admits a section consisting of the fixed points of the $\mathbb{G}_m$-action on $X$. The singularities of $X$ 
are the fixed points of the $\mathbb{G}_m$-action  in the multiple fibers of $\pi$. 
More precisely, if $D_X(p)=(e/m)[p]$ where $m>1$ and $e,m$ are coprime then the unique fixed point $x_p$ over $p$ is a cyclic quotient singularity of type $(m,e')$ where $e'\in\{1,\ldots,m-1\}$ and $e'\equiv e\mod\, m$, see \cite[Prop.\ 3.8(b)]{FZ}. The following analog of Proposition 4.12 in \cite{FZ} deals with parabolic (instead of hyperbolic) $\mathbb{G}_m$-surfaces. 
\edefis

\blem\label{lem: covering divisor} 
Given a parabolic $\mathbb{G}_m$-surface $\pi\colon X\to C$ and a branched covering 
$\mu\colon B\to C$,  let $\pi'\colon X'\to B$ be obtained from the cross-product $B\times_C X$ via normalization. Then $\pi'\colon X'\to B$ is again a parabolic $\mathbb{G}_m$-surface. The $\Q$-divisors $D_X$ on $C$ and $D_{X'}$ on $B$ in the 
corresponding  DPD 
presentations are related via $D_{X'}=\mu^*D_X$.\elem

\bproof The projection $\pi\colon X\to C$ is the orbit morphism of a parabolic 
$\mathbb{G}_m$-action, say, $\Lambda$ on $X$ with all its fibers being smooth and irreducible. 
Hence the fibers of $\pi'\colon X'\to B$ are also irreducible and $\Lambda$ 
lifts to a parabolic $\mathbb{G}_m$-action $\Lambda'$ on the cross-product 
$B\times_C X$ where $\lambda\colon (b,x)\mapsto (b, \lambda. x )$ $\forall\lambda\in\mathbb{G}_m$. This lifted action survives in the normalization 
$X'\to B\times_C X$. Thus, $\Lambda$ lifts to a parabolic $\mathbb{G}_m$-action $\Lambda'$ on $X'$ 
such that $\pi'\colon X'\to B$ is the orbit morphism. The induced morphism $\mu'\colon X'\to X$ is $\mathbb{G}_m$-equivariant. 

On the other hand, consider the $\Q$-divisor $D_{X''}=\mu^*D_X$ on $B$ and the corresponding
parabolic $\mathbb{G}_m$-surface $\pi''\colon X''\to B$ with the DPD presentation related to the pair $(B,D_{X''})$. For any $n\ge 0$ there is a natural embedding $A_n=H^0(C,\cO_C(\lfloor nD_X\rfloor))\hookrightarrow \hat A_n=H^0(B,\cO_B(\lfloor nD_{X''}\rfloor))$. This yields a monomorphism of graded rings $$\cO_X(X)=\bigoplus_{n\ge 0} A_n\hookrightarrow\cO_{X''}(X'')=\bigoplus_{n\ge 0} A''_n\,$$ and the induced $\mathbb{G}_m$-equivariant surjection $\mu''\colon X''\to X$ that fits in the commutative diagram
$$
 \bdi
X''&\rTo<{\mu''} &  X\\
\dTo<{\pi''} &  & \dTo>{\pi} \\
B &\rTo>{\mu} & C \, 
\edi
$$
By the universal property of the cross-product, $\mu''$ can be factorized as $$\mu''\colon X''\to B\times_C X \stackrel{\pi}{\longrightarrow} X\,.$$ Since $X''$ is normal one has as well a factorization $\mu''\colon X''\stackrel{\psi}{\longrightarrow} X' \stackrel{\pi}{\longrightarrow} X$ where $\psi$ is a $\mathbb{G}_m$-equivariant surjection fitting in the commutative diagram
$$
 \bdi
X''&&\rTo<{\psi} &&   X'&\\
&\rdTo<{\pi''} &  & \ldTo>{\pi'}& \\
&&B&& \, .
\edi
$$
Since the fibers of a parabolic $\mathbb{G}_m$-surface are irreducible   (\cite[Rem.\ 3.13(iii)]{FZ-lnd}), $\psi$ is a bijection. Due to the normality of both $X''$ and $X'$, $\psi$ is an isomorphism. Now the desired conclusion follows. 
\eproof

\bprop\label{prop: Gm-surfaces} Consider an $\A^1$-fibration $\pi\colon X\to C$ on a normal affine surface $X$ over a smooth affine curve $C$. Let $\tilde\pi\colon \tilde X\to B$ be a marked GDF $\mu_d$-surface obtained from $\pi\colon X\to C$ via a cyclic base change $\delta\colon B\to C$ with the Galois group $\mu_d$ and  a subsequent normalization as in Lemma {\rm \ref{lem: br-covering}}.
Then the following are equivalent.
\begin{itemize}\item[(i)] $\tilde\pi\colon \tilde X\to B$ admits a structure of a line bundle;
\item[(ii)]  $\pi\colon X\to C$  admits a structure of  a parabolic $\mathbb{G}_m$-surface.
 \end{itemize}
\eprop

\bproof (i)$\Rightarrow$(ii). In case (i) the fibers of $\pi\colon X\to C$ are irreducible. If all of them are reduced then $\pi\colon X\to C$ admits a  structure of a line bundle, and so, (ii) holds. Otherwise, $\pi\colon X\to C$ has multiple fibers. If a fiber $F_c=\pi^{-1}(c)$ is multiple then the branched covering construction of  \ref{sit: construction} creates a unique point $b=\delta^{-1}(c)\in B$ over $c$. This point $b$ is a fixed point of the $\mu_d$-action on $B$.

The $\mu_d$-action on $ \tilde X$ preserves the fibration $\tilde\pi\colon \tilde X\to B$ and sends the sections of $\tilde\pi$ to sections. Taking the fiberwise barycenter of the $\mu_d$-orbit of the zero section yields a $\mu_d$-invariant section, say, $Z$ of $\tilde \pi$. 
There is a new line bundle structure on $\tilde X$ with projection $\tilde\pi$ and the associated parabolic $\mathbb{G}_m$-action $\tilde\Lambda$ on $\tilde X$ along the fibers of $\tilde\pi$ with $Z$ as the fixed point set. In fact, 
$\tilde\Lambda=t_Z\circ\Lambda\circ t_Z^{-1}$ where $\Lambda$ stands for the parabolic $\mathbb{G}_m$-action on $\tilde X$ associated with the original line bundle structure on $\tilde X$, and $t_Z\in\Aut_B\tilde X$ is the translation on $Z$ in the vertical direction. 

The $\mathbb{G}_m$-action $\tilde\Lambda$ commutes with the $\mu_d$-action on $\tilde X$. Indeed, the conjugation of $\tilde\Lambda$ by elements of the $\mu_d$-action yields a homomorphism $\mu_d\to\Aut\G_m\cong\ZZ/2\ZZ$. To show that this homomorphism is trivial we take a fixed point $b\in B$ of $\mu_d$. The fiber $\tilde F_b=\tilde\pi^{-1}(b)\cong\A^1$ is $\mu_d$-invariant and $Z\cap\tilde F_b$ is a common fixed point of $\mu_d$ and $\tilde\Lambda$. Hence $\mu_d|_{\tilde F_b}\subset\tilde\Lambda|_{\tilde F_b}$. Since  $\mu_d$ and $\tilde\Lambda$ commute when restricted to $\tilde F_b$ they commute on $\tilde X$.

Therefore, $\tilde\Lambda$ descends to a parabolic $\mathbb{G}_m$-action on the quotient $X=\tilde X/\mu_d$ along the fibers of $\pi\colon X\to C$ thus converting $X$ into a parabolic  $\mathbb{G}_m$-surface over $C$. 

(ii)$\Rightarrow$(i). Conversely, suppose that $\pi\colon X\to C$ is the orbit morphism of a parabolic $\mathbb{G}_m$-action $\Lambda$ on $X$. Then $\Lambda$ lifts to a parabolic $\mathbb{G}_m$-action $\tilde\Lambda$ on the cross-product $B\times_C X$ where $\lambda\colon (b,x)\mapsto (b, \lambda. x )$ $\forall\lambda\in\mathbb{G}_m$. This lifted action survives in the normalization $\tilde X\to B\times_C X$. Thus, $\Lambda$ lifts  through the branched covering $\tilde X\to X$ as in \ref{sit: construction}. In this way the GDF surface $\tilde\pi\colon \tilde X\to B$ acquires an effective parabolic $\mathbb{G}_m$-action $\tilde\Lambda$ along the fibers of  $\tilde\pi$. Hence all these fibers are reduced and irreducible, cf.\ \cite[Rem.\ 3.13(iii)]{FZ-lnd}. This allows to define a line bundle structure on $\tilde\pi\colon \tilde X\to B$. 
\eproof

\brems\la{rem: DPD} 1.
The integral divisor $-\cD_{\tilde X}=-\mu^* D_X\in\Pic B$ is associated with the line bundle $\tilde\pi\colon\tilde X\to B$. This divisor $\cD_{\tilde X}$ determines a DPD presentation of the $\G_m$-surface $\tilde\pi\colon\tilde X\to B$ (see Lemma \ref{lem: covering divisor}). 
 
 2. 
It is known that a Gizatullin $\mathbb{G}_m$-surface $X$ is toric if and only if the associated extended graph 
$\Gamma_{\rm ext}$ is linear, see \cite[Lem.\ 2.20]{FKZ-completions}. A similar criterion holds for the parabolic  $\mathbb{G}_m$-surfaces. Namely, one can show  that conditions (i) and (ii) of Proposition \ref{prop: Gm-surfaces} are equivalent to
 the following one (cf.\ \cite[Prop.\ 3.22]{FKZ-completions}):
 
 \smallskip
 
(iii) \emph{ $\Gamma_{\rm ext}$ is a bush, that is, any fiber tree $\Gamma_c(\bar\pi)$, $c\in C$, is a chain.}

\smallskip

\noindent  As usual, $\Gamma_{\rm ext}$ stands for the extended graph of a pseudominimal resolved  completion $\bar\pi\colon \bar X\to\bar C$ of the $\A^1$-fibered  surface $\pi\colon X\to C$ as in Proposition \ref{prop: Gm-surfaces}. This graph is viewed  as a rooted tree 
 with the section at infinity $S$  as the root vertex. 
\erems

\subsection{Parabolic $\mathbb{G}_m$-surfaces as Zariski factors}

The following theorem is the main result of Section \ref{sec: basic-examples}.

\bthm\label{thm: Gm-surfaces-are-zar-factors} Any parabolic $\mathbb{G}_m$-surface is a Zariski factor.
\ethm

The proof is done in Lemmas \ref{lem: proof-under-assumptions}--\ref{lem: A1-fibration}. It is preceded by several auxiliary facts. In the next elementary lemma we use the following terminology.

\bdefi\label{sit: uniruledness} Given a morphism $\pi\colon X\to C$ of a normal affine variety $X$ onto a smooth curve $C$, the fiber over a point $c\in C$ is called {\em multiple} if $d>1$ where $d$ is the greatest common divisor of the multiplicities of the components of $\pi^*(c)$. 
\edefi
 
\blem\label{lem: two-mult-fibers} 
A polynomial of one variable cannot have two or more multiple fibers. 
\elem

\bproof
Suppose that $f\in \mathbb{k}[t]$ (is nonconstant and) has at least two multiple fibers, say, $f^*(0)$ and $f^*(1)$. Then $f=p^r=1-q^s$ for some polynomials $p,q\in \mathbb{k}[t]$ such that $p^r+q^s=1$, $r,s\ge 2$, $\deg p=d/r$, and $\deg q=d/s$ where $d=\deg f$. The derivative $f'$ vanishes to order $r-1$ at any root of $p$ and to order $s-1$ at any root of $q$. More precisely, since $p$ and $q$ do not have any common root one has
$$\div f' \ge (r-1)\div p + (s-1)\div q\,.$$ Taking the degrees one gets the inequalities  $$(r-1)/r+(s-1)/s\le (d-1)/d\,.$$
Since $r,s\ge 2$ it follows that
$$1\le \left(1-\frac{1}{r}\right)+\left(1-\frac{1}{s}\right)\le \left(1-\frac{1}{d}\right)\,.$$
This gives a contradiction. Alternatively, letting $x=p(t)$, $y=q(t)$ yields a parametrization of the plane affine curve $E=\{x^r+y^s=1\}$. However, there is no nonconstant morphism $\A^1\to E$. 
\eproof

An affine variety is called {\em $\A^1$-uniruled} if a general point of $X$ belongs to the image of a nonconstant morphism $\A^1\to X$. 
One can find in the literature different versions of the following results, see, e.g., \cite{Ru} and \cite[Thm.\ 4.1]{GM}. 

\blem\label{lem: mult-fiber} Consider a dominant morphism $\pi\colon X\to C$  from an affine variety $X$ to  a smooth affine curve $C$. 
Assume that  one of the following conditions is fulfilled.
\begin{itemize} \item[(i)] $C\not\cong\A^1$; \item[(ii)] $C\cong\A^1$ and $\pi$ has at least two multiple fibers. \end{itemize}
Then the following hold.
\begin{itemize}\item[(a)] Any morphism  $\A^n\to X$ has image contained in a fiber of $\pi$. Consequently, there is no dominant morphism $\A^n\to X$.
\item[(b)] 
If the general fibers of $\pi$ are $\A^1$-uniruled then any automorphism $\alpha\in\Aut X$ preserves the fibration $\pi\colon X\to C$, that is, sends the fibers to fibers.
\end{itemize}
\elem

\bproof (a) In case (i) any morphism $\A^n\to C$ is constant, hence the assertion follows. Assuming (ii) suppose to the contrary that there exists $F\colon\A^n\to X$ such that $f:=\pi\circ F\colon\A^n\to\A^1$ is nonconstant. Then $f\in \mathbb{k}[t_1,\ldots,t_n]$ is a nonconstant polynomial with two distinct multiple fibers. The latter contradicts Lemma \ref{lem: two-mult-fibers}.

(b) If  $\alpha\in\Aut X$ does not preserve the fibration $\pi\colon X\to C$ then there is a morphism $\phi\colon\A^1\to X$ such that the composition $f=\pi\circ\phi$ is not constant. Then $C\cong\A^1$, and so, $f\in \mathbb{k}[t]$ is a polynomial with two multiple fibers. This leads to a contradiction  as before.
\eproof

In the proof of Theorem \ref{thm: Gm-surfaces-are-zar-factors} we use the following auxiliary Lemmas \ref{lem: rank-Pic} and \ref{lem: isom-sing}.

\blem\label{lem: rank-Pic} Let  $\pi\colon X\to \PP^1$ be an $\A^1$-fibration on  a normal affine surface $X$. Assume that the group $\Pic X$ is  finite and $\pi(X)\supset\A^1=\PP^1\setminus\{\infty\}$. Then $\pi(X)=\A^1$, all the fibers of $\pi$ are irreducible, and the divisor class group $\Cl\, (X)$ is generated by the classes of the multiple fibers of $\pi$. 
\elem

\bproof Let $\bar\pi\colon\bar X\to\PP^1$ be a resolved completion of $\pi\colon X\to\PP^1$ with extended graph $\Gamma_{\rm ext}$. Then  $\Gamma_{\rm ext}$ is a rooted tree with the section at infinity $S$ as the root. We let 
\begin{itemize}
\item 
$\cB_1,\ldots,\cB_n$ be the degenerate fibers of $\bar\pi$ over the points $b_i\in\PP^1$, $i=1,\ldots,n$;
\item 
$D=\bar X\setminus X_{\rm resolved}$ stand for the boundary divisor;
\item
$E$ stand for the exceptional divisor of the resolution of singularities $X_{\rm res}\to X$;
\item
$m_i\ge 0$ be the number of components of the fiber $\pi^{-1}(b_i)$;
\item 
$n_i\ge 0$ be the number of components of $\cB_i$ which are components of $D+E$.
\end{itemize}
 Thus, $\cB_i$ consists of $n_i+m_i$ components.  Contracting subsequently $(-1)$-fiber components one arrives finally at a Hirzebruch surface $\mathbb{F}_s$.  In this way one contracts $n_i+m_i-1$ components of $\cB_i$, $i=1,\ldots,n$. Let $\rho(V)$ be the Picard number of a variety $V$. Since $\rho(\mathbb{F}_s)=2$ one has $\rho(\bar X)=2+\sum_{i=1}^n (n_i+m_i-1)$. Letting $\natural \cD$ be the number of components of a divisor $\cD$ one gets
$$0=\rho(X)=\rho(\bar X)-\natural (D+E)$$
\be\la{eq:Picard-number} =\left(2+\sum_{i=1}^n (n_i+m_i-1)\right) - \left(1+\sum_{i=1}^n n_i\right)=1+\sum_{i=1}^n (m_i-1)\,.\ee 
It follows that 
\begin{itemize}
\item 
$m_i\le 1$ $\forall i=1,\ldots,n$, that is, the fibers of $\pi\colon X\to \pi(X)$ are irreducible;
\item
 $m_i=0$ for exactly one value of $i$, that is, $\pi(X)=\A^1$.
 \end{itemize}

Let $\omega\subset\A^1$ be a Zariski open dense subset such that $U=\pi^{-1}(\omega)$ is isomorphic over $\omega$ to the cylinder $\omega\times\A^1$, and so, $\Cl\, (U)=0$. For $\cD=X\setminus U$ one has the exact sequence $\Div (\cD)\to \Cl\, (X)\to\Cl\, (U)\to 0$ where $\Div (\cD)$ is the subgroup of Weil divisors on $X$ supported by $\cD$, see \cite[p.\ 206]{Mi}. Thus $\Cl\, (X)$ is generated by the fibers of $\pi$ contained in $\cD$. Any reduced fiber of $\pi$ represents the zero class in $\Cl\, (X)$. Hence $\Cl\, (X)$ is generated by the classes of the multiple fibers of $\pi$. 
\eproof

\blem\label{lem: isom-sing} Let $\pi\colon X\to C$ be a parabolic $\mathbb{G}_m$-surface with a singular point $x\in X$, and let    $X'$ be
 a normal affine surface.  Suppose that there is an isomorphism $\phi\colon\cX'\stackrel{\cong}{\longrightarrow}\cX$ of the $n$-cylinders $\cX=X\times\A^n$ and $\cX'=X'\times\A^n$. Let $\phi(\{x\}\times\A^n)=\{x'\}\times\A^n$ where $x'\in \Sing X'$. Then the germs  of  surface singularities $(X,x)$ and $(X',x')$ are isomorphic. \elem

\bproof Let $\sigma_1\colon X_1\to X$ be the blowup of the maximal ideal of the unique singular point $x\in X$ followed by a  normalization. The induced morphism of $n$-cylinders $\sigma_1\times\id\colon \cX_1\to\cX$ consists in the blowup of the ideal of the singular ruling ${\rm sing}\,\cX=\{x\}\times\A^n$ and a subsequent normalization. By a theorem of Zariski (\cite{Za}; see also \cite{Li}) a sequence of blowups in maximal ideals and subsequent normalizations 
$$X_N\stackrel{\sigma_N}{\longrightarrow} X_{N-1}\to\ldots\to X_1\stackrel{\sigma_1}{\longrightarrow} X_0=X\,$$ resolves the singularity $(X,x)$. It induces a similar sequence of blowups  in rulings of our $n$-cylinders  and subsequent normalizations 
$$\cX_N\stackrel{\sigma_N}{\longrightarrow} \cX_{N-1}\to\ldots\to \cX_1\stackrel{\sigma_1}{\longrightarrow} \cX_0=\cX\,$$ which results in a resolution of the corresponding singularities of $\cX\cong\cX'$. The exceptional divisor of the resolution $\cX_N\to\cX$ is $\cE=E \times\A^n$ where $E$ is the exceptional divisor of the resolution $X_N\to X$.  

Let further $\sigma'_1\colon X'_1\to X'$ be the blowup of the maximal ideal of the  singular point $x'\in X'$ followed by a normalization. Then $\sigma'_1\times\id\colon \cX'_1\to\cX'$ is the blowup of the ideal of the singular ruling $\{x'\}\times\A^n$  followed by a normalization. Under the isomorphism $\psi:=\phi^{-1}\colon\cX'\stackrel{\cong}{\longrightarrow}\cX$ this ruling goes to the ruling $\{x\}\times\A^n$. Hence $\phi$ lifts to an isomorphism $\psi_1\colon\cX'_1\stackrel{\cong}{\longrightarrow}\cX_1$. Continuing in this way one arrives finally at a resolution $\cX_N'\to\cX'$ where  $\cX_N'=X'_N\times\A^n\cong\cX_N$ with exceptional divisor $\cE'=E' \times\A^n\cong\cE=E \times\A^n$ where $E'$ is the exceptional divisor of the induced resolution of singularity $X'_N\to X'$.  Under this procedure the singularities of the embedded surfaces $X\times\{0\}\subset\cX$ and $X'\times\{0\}\subset\cX'$ are simultaneously resolved and there is an isomorphism $\psi_N\colon\cX'_N\stackrel{\cong}{\longrightarrow}\cX_N$ such that $\psi_N(\cE')=\cE$. The only irreducible complete curves in $\cE$ (in $\cE'$, respectively) are of the form $E_i\times\{v\}$ ($E_i'\times\{v'\}$, respectively) where $E_i$ and $E_i'$ are components of $E$ and $E'$, respectively, and $v,v'\in\A^n$. 
Given such a curve $E_i'\times\{v'\}$ there is a curve $E_{\sigma(i)}\times\{v\}$
such that $\psi(E'_i\times\{v'\})=E_{\sigma(i)}\times\{v\}$. It follows that $\psi(E'_i\times\A^n)=E_{\sigma(i)}\times\A^n$. The image $\psi(X'_N\times\{v'\})$ is a smooth surface in $\cX_N$ which meets the exceptional divisor $\cE\subset\cX_N$ transversely along the curve $\psi(E'\times\{v'\})=E\times\{v\}\subset X_N\times\{v\}$. The same is true for $X_N\times\{v\}$. Namely, the latter is a smooth surface in $\cX_N$ which meets $\cE$ transversely along the same curve $E\times\{v\}$. Projecting the both surfaces to $X_N$ via the canonical projection $\cX_N\to X_N$ yields a local isomorphism of  the surface germs $(\psi(X_N'\times\{v'\}), E\times\{v\})$ and $(X_N\times\{v\}, E\times\{v\})$ near the common exceptional divisor  $E\times\{v\}$. Contracting the divisor $E\times\{v\}$ yields an isomorphism between the singular germs $(X,x)$ and $(X',x')$.
\eproof

The next lemma gives a proof of Theorem \ref{thm: Gm-surfaces-are-zar-factors} under an additional assumption. 

\blem\label{lem: proof-under-assumptions} 
Let $\pi\colon X\to C$ be a parabolic $\mathbb{G}_m$-surface, and let    $X'$ be
 a normal affine surface.  Assume that there is an isomorphism $\phi\colon\cX'\stackrel{\cong}{\longrightarrow}\cX$ of the $n$-cylinders $\cX=X\times\A^n$ and $\cX'=X'\times\A^n$. Suppose also that for the induced $\A^{n+1}$-fibration $\hat\pi\colon\cX\to C$ one of the conditions {\rm (i)} and {\rm (ii)} of Lemma {\rm \ref{lem: mult-fiber}} is fulfilled. Then  $X'\cong X$.\elem

\bproof By Lemma \ref{lem: mult-fiber} one has $\hat\pi\circ\phi|_{\{x'\}\times\A^n}={\rm cts}(x')\in C$. This provides a surjection $\pi'\colon X'\to C$ which extends to a morphism $\hat\pi'=\pi'\circ {\rm pr}_1\colon \cX'\to C$ fitting in the commutative diagram 
\be\label{diagr: case-1}
 \bdi
 \cX'&\rTo<{\phi}>{\cong} &  \cX\\
\dTo<{\hat\pi'} &  & \dTo>{\hat\pi} \\
C &\rTo>{\id} & C \, 
\edi
\ee
For any point $c\in C$, $\phi$ restricts to an isomorphism $$\phi|_{{\pi'}^{-1}(c)\times\A^n}\colon {\pi'}^{-1}(c)\times\A^n\stackrel{\cong}{\longrightarrow} \pi^{-1}(c)\times\A^n\cong\A^{n+1}\,.$$ Since any curve is a Zariski factor (\cite[Thm.\ 6.5]{AHE}) one deduces that any fiber of $\pi'\colon X'\to C$ is isomorphic to $\A^1$. The multiple fibers of $\pi'$ (and $\hat\pi'$) are situated over the same points of $C$ as the ones of $\pi$ (and $\hat\pi$), have the same multiplicities, and each of them carries a unique singular point of $X'$, see \ref{sit: toric-surfaces}. 

Applying to the surfaces $\pi\colon X\to C$ and $\pi'\colon X'\to C$ a suitable branched covering with the same cyclic base change $\mu\colon B\to C$ as in Lemma \ref{lem: br-covering} one obtains GDF $\mu_d$-surfaces $\tilde \pi\colon\tilde X\to B$ and $\tilde \pi'\colon\tilde X'\to B$ and 
 two cyclic branched coverings $\tilde X\to X$ and $\tilde X'\to X'$ with the Galois group $\mu_d$. The same branched covering construction applied to the cylinders $\cX$ and $\cX'$ (that are isomorphic over $C$, see \eqref{diagr: case-1}) yields the cylinders $\tilde\cX=\tilde X\times\A^n$ and $\tilde\cX'=\tilde X'\times\A^n$ along with a $\mu_d$-equivariant commutative diagram
\be\label{diagr: case-2}
 \bdi
 \tilde \cX'&\rTo<{\tilde\phi}>{\cong_{\mu_d,B}} &  \cX \\
\dTo<{\tilde \pi'\times\id} &  & \dTo>{\tilde \pi\times\id} \\
B &\rTo>{\id} & B \, 
\edi
\ee
where $\tilde\phi$ 
is a lift of $\phi$ from \eqref{diagr: case-1}.

 By Proposition \ref{prop: Gm-surfaces},  $\tilde\pi\colon\tilde X\to B$ admits a line bundle structure.  
In particular, the fibers of $\tilde\pi\colon \tilde X\to B$ and the ones of $\tilde\pi\times\id\colon \tilde\cX\to B$ are reduced and irreducible. Hence 
the fibers of $\tilde\pi'\times\id\colon\tilde\cX'\to B$ and the ones of $\tilde\pi'\colon\tilde X'\to B$ are as well. Therefore,  $\tilde\pi'\colon\tilde X'\to B$ also admits a structure of a line bundle. Proceeding as in the proof of (i)$\Rightarrow$(ii) in Proposition \ref{prop: Gm-surfaces} one can choose $\mu_d$-equivariant line bundle structures, say, $\xi$ and $\xi'$ of $\tilde\pi\colon\tilde X\to B$ and $\tilde\pi'\colon\tilde X'\to B$, respectively. In particular, the zero sections are $\mu_d$-invariant. Taking the quotients by the $\mu_d$-actions yields a structure of parabolic $\G_m$-surfaces on $\pi\colon X\to C$ and $\pi'\colon X'\to C$ where the first one is the given $\G_m$-structure.  By Lemma \ref{lem: line-bundle-equiv} there is a $\mu_d$-equivariant isomorphism of line bundles $\xi\cong\xi'$. It induces a $\G_m$-equivariant isomorphism over $C$ of the quotient parabolic $\G_m$-surfaces $X\cong_{\G_m,C} X'$.
\eproof

\bsit\label{sit: further-conventions}
We will suppose in the sequel that in the setting of Theorem \ref{thm: Gm-surfaces-are-zar-factors} 
neither (i) nor (ii) of Lemma \ref{lem: mult-fiber} is fulfilled, that is, $C=\A^1$ and the fibration $\pi\colon X\to \A^1$ 
has at most one multiple fiber. By virtue of the Miyanishi-Sugie-Fujita Theorem
we may suppose as well that
the parabolic $\mathbb{G}_m$-surface $\pi\colon X\to \A^1$ has exactly one  multiple fiber $\pi^{-1}(0)$ of multiplicity $d>0$, and so, $X$ has a unique singular point, say, $x$ which is the unique fixed point of the $\mathbb{G}_m$-action on the multiple fiber $\pi^{-1}(0)$ and a cyclic quotient singularity. 
Let $X'$ be a normal affine surface such that the cylinders $\cX$ and $\cX'$ are isomorphic. Then $X'$ has as well a unique singular point, say, $x'$.  By Lemma \ref{lem: isom-sing} there is a local isomorphism of singularities $(X,x)\cong (X',x')$.
\esit

Remind that a toric variety $X$ is called {\em non-degenerate} if $\cO_X(X)^\times=\mathbb{k}^*$. Any non-degenerate affine toric surface is isomorphic to the quotient of $\A^2$ by a diagonal $\mu_d$-action 
\be\la{diag-action} \zeta.(x,y)=(\zeta x, \zeta^e y)\quad\mbox{where}\quad\zeta^d=1,\,\,\,1\le e<d,\,\,\,\gcd(e,d)=1\,\ee (see, e.g., \cite[Ex.\ 2.3]{FZ} or \cite[Ex.\ 2.8]{FZ-lnd}). One has $\Cl\, (\A^2/\mu_d)\cong\ZZ/d\ZZ$.

\blem\label{lem: toric-1} 
Under the assumptions of {\rm\ref{sit: further-conventions}}, $X$ is a non-degenerate affine toric surface.
\elem
\bproof According to Proposition \ref{prop: Gm-surfaces} the branched covering construction 
applied to $\pi\colon X\to\A^1$  with the cyclic base change $\A^1\to\A^1$, $z\mapsto z^d$, yields a a line bundle $\tilde\pi\colon\tilde X\to\A^1$ which is trivial since $\Pic\A^1=0$. Its zero section is $\mu_d$-invariant, hence the line bundle structure is $\mu_d$-equivariant. Via an isomorphism $\tilde X\cong\A^2$ one obtains an effective action of $\mu_d$ on $\A^2$ which can be linearized taking the form \eqref{diag-action} in appropriate coordinates on $\A^2$. Then $\tilde\pi$ becomes the standard projection $\A^2\to\A^1$, $(x,y)\mapsto x$. Thus $X\cong\A^2/\mu_d$ is an affine toric surface of type $(d,e)$. 
\eproof

\bcor\la{cor:cl-grp} Under the assumptions of {\rm\ref{sit: further-conventions}} one has $\Cl\, (X')\cong\Cl\, (X)\cong\ZZ/d\ZZ$.
\ecor

\bproof
Recall  (see \cite[Thm.\ 8.1]{Fo}; cf.\ \cite[(9.9.8)]{Fu}) that $\Cl\, (X)$ is a cancellation invariant. Since $\cX\cong\cX'$ there are isomorphisms
\be\label{eq: seq-isom} \Cl\, (X')\cong\Cl\,(\cX')\cong\Cl\,(\cX)\cong\Cl\, (X)\cong\Z/d\Z\,.\ee
\eproof

We use below the following  simple version of the Cox ring (see  \cite{ADHL}, \cite{Cox}). 

\bdefi[\emph{Cox ring}]\label{def: Cox} Let $X$ be a normal affine variety with $\cO_X(X)^\times=\mathbb{k}^*$. Suppose that the  divisor class group $\Cl\, (X)$ is a finite cyclic group of order $d$ generated by the class of a Weil divisor $F$ on $X$.
Consider the ($\Z/d\Z$)-graded Cox ring 
$${\rm Cox}\,\cO_X(X):=\bigoplus_{j=0}^{d-1} H^0(X,\cO_{X}(jF))\zeta^j\,$$
where $\zeta\in k^\times$ is a primitive $d$th root of unity. Then $\tilde X=\Spec {\rm Cox}\,\cO_X(X)$   is a normal affine variety equipped with a $\mu_d$-action defined via the ($\Z/d\Z$)-grading on  $\cO_{\widetilde X}(\tilde X)={\rm Cox}\,\cO_X(X)$, see \cite[Thm.\ 1.5.1.1]{ADHL}. The natural embedding $\cO_X(X)\hookrightarrow {\rm Cox}\,\cO_X(X)$ onto the  subalgebra of $\mu_d$-invariants yields the quotient morphism $\tilde X\to X=\tilde X/\mu_d$. We call this morphism a {\em  Cox   covering construction}. 

Consider the $n$-cylinder $\cX=X\times\A^n$ over $X$. The divisor class group $\Cl\,\cX\cong\Cl\, (X)\cong\mu_d$ is generated by the class of the Weil divisor $\cF=F\times \A^n$ on $\cX$, see \cite[Thm.\ 8.1]{Fo}. The  Cox covering construction applied to $(\cX,\cF)$ yields the $n$-cylinder $\tilde\cX=\tilde X\times\A^n$.  
\edefi

For the next lemma we provide two alternative proofs.

\blem\label{lem: cyclic-factor} In the setup of {\rm \ref{sit: further-conventions}}, $X'$ is a non-degenerate affine toric surface.
\elem

\bproof By Lemma \ref{lem: toric-1} one has $\cO_{X}(X)^\times=\mathbb{k}^*$. Since $\cO_{\cX'}(\cX')^\times\cong\cO_{\cX}(\cX)^\times=\mathbb{k}^*$ then also $\cO_{X'}(X')=\mathbb{k}^*$. Consider the Weil divisor $F_0=\pi^{-1}(0)$ on $X$. Its class generates the class group $\Cl\, (X)\cong\mu_d$. By \cite[Thm.\ 3.1]{AG} the branched covering $\A^2\to X=\A^2/\mu_d$ as in Lemma \ref{lem: toric-1} coincides with the Cox covering defined by the pair $(X, F_0)$, see Definition \ref{def: Cox}. 
Letting $\cF_0=F_0\times\A^n$ and applying the cyclic  Cox covering construction to the pair $(\cX,\cF_0)$ one obtains the $n$-cylinder $\tilde\cX=\A^2\times\A^n=\A^{n+2}$.

Choose a Weil divisor $F'_0$ on $X'$ whose class in $\Cl\, (X')$ is sent to the class of $F_0$ in $\Cl\, (X)$ via the isomorphisms  (\ref{eq: seq-isom}).   
Applying the Cox covering construction to the pair $(X', F'_0)$ leads to a cyclic $\mu_d$-covering $\tilde X'\to X'$. Letting $\cF_0'= F_0'\times\A^n$ and
applying the Cox covering construction to the pair $(\cX',\cF'_0)$ yields the $n$-cylinder $\tilde\cX'=\tilde X'\times\A^n$. We claim that $\tilde\cX'$ is isomorphic to $\tilde \cX\cong\A^{n+2}$. Indeed, let $\varphi\colon\cX\stackrel{\cong}{\longrightarrow}\cX'$ be an isomorphism. One has $\cF'_0 \sim \phi_*\cF_0$ on $\cX'$. The Cox covering construction does not depend, up to an isomorphism, on the choice of a divisor in the class generating the group $\Cl\, (\cX')\cong\Z/d\Z$, see  \cite[Prop.\ 1.4.2.2]{ADHL}.  Hence applying this construction to the pair $(\cX',  \phi_*\cF_0)$ yields a variety, say, $\hat\cX'$ isomorphic to $\tilde\cX'$. On the other hand, the isomorphism of pairs $\varphi\colon (\cX,\cF_0)\stackrel{\cong}{\longrightarrow} (\cX',\varphi_*\cF_0)$ leads to an isomorphism $\hat\cX'\cong\tilde\cX$.

It follows that  $\tilde X'\times\A^n\cong\A^{n+2}$. By the Miyanishi-Sugie-Fujita Theorem (\cite[Cor.\ 3.3]{Fu}, \cite[Ch.\ 3, Thm.\ 2.3.1]{Mi}) one has $\tilde X'\cong\A^2$. Since the $\mu_d$-action on $\A^2$ can be linearized (see, e.g., \cite[Thm.\ 2]{Fur}) the quotient $X'\cong\A^2/\mu_d$ is an affine toric surface. 
\eproof

\bsit[\emph{The second proof of Lemma} \ref{lem: cyclic-factor}]\label{sh-another proof}
Alternatively, one can argue as follows.  

\smallskip

\noindent {\bf Claim  1.} \emph{There exists an $\A^1$-fibration $\pi'\colon X'\to \A^1$.}

\smallskip

\noindent \emph{Proof of Claim $1$.}
By the Iitaka-Fujita Theorem (\cite{IF}) the log-Kodaira dimension is a cancellation invariant. Hence  $\bar k(X')=\bar k(X)=-\infty$.
Therefore, $X'$ admits an $\A^1$-fibration $\pi'\colon X'\to C$, see \cite[Ch.\ 2, Thm.\ 2.1.1]{Mi}. Composing the induced surjection $\cX'\to C$ with an isomorphism $\varphi\colon\cX\stackrel{\cong}{\longrightarrow}\cX'$  one concludes that either $C\cong\A^1$ or $C\cong\PP^1$. However, the latter is impossible. Indeed, by Corollary \ref{cor:cl-grp} one has $\Cl\, (X')\cong\Cl\, (X)\cong\ZZ/d\ZZ$ where $d$ is the multiplicity of the fiber $F_0=\pi^{-1}(0)$ through the singular point $x\in X$.  Let $F_0'$ be the fiber  of ${\pi'}$ through the singular point $x'$ of $X'$. Since $\Cl\, (X')\cong\ZZ/d\ZZ$ the  divisor $dF'_0$ on $X'$ is principal, that is, $dF'_0=\div f$ for some $f\in\cO_{X'}(X')$. Since $f$ is constant on any 
 $\A^1$-fiber of $\pi'$ one has $f={\pi'}^*(g)$ where $g\in\cO_C(C)$ is not a constant. Hence $C$ cannot be a complete curve. 
 
\smallskip

\noindent {\bf Claim  2.} \emph{Let $\pi'\colon X'\to \A^1$ be an $\A^1$-fibration. Then each fiber of $\pi'$ is irreducible, there exists exactly one multiple fiber of $\pi'$, this fiber has multiplicity $d$ and contains the singular point $x'$ of $X'$.}

\smallskip

\noindent \emph{Proof of Claim $2$.}
The irreducibility of the fibers of $\pi'$ follows from Lemma \ref{lem: rank-Pic}. Suppose to the contrary that $\pi'\colon X'\to\A^1$ has 
 two or more multiple fibers. Then by Lemma \ref{lem: mult-fiber} any automorphism of the cylinder $\cX'$ preserves the induced $\A^{n+1}$-fibration $\cX'\to\A^1$. The same must be true for $\cX\cong\cX'$ and the induced fibration $\cX\to\A^1$. However, the cylinder $\cX$ over the non-degenerate affine toric surface $X$ is flexible (\cite[Thm.\ 2.1]{AKZ}). This leads to a contradiction. 
 
Let $F_0'=(\pi')^{-1}(0)$ be the unique multiple fiber of $\pi'$. By Corollary \ref{cor:cl-grp} one has \be\la{eq:clgrp} \Cl\,\, (X')=\langle F_0'\rangle\cong\Cl\, (X)=\langle F_0\rangle\cong\ZZ/d\ZZ\,.\ee It follows that $(\pi')^*(0)=dF_0'$ and, in turn, $\pi'(x')=0\in \A^1$.  This proves Claim 2.

\smallskip

Consider further pseudominimal resolved completions $\bar\pi\colon\bar X\to\PP^1$ and $\bar\pi'\colon\bar X'\to\PP^1$ of $\pi\colon X\to\A^1$ and $\pi'\colon X'\to\A^1$, respectively. Let $\bar F_0\subset \bar X$ and $\bar F_0'\subset \bar X'$ be the fiber components which contain the proper transforms of $F_0$ and $F_0'$, respectively.  Since the fibers $F_0=\pi^{-1}(0)$ and $F'_0=(\pi')^{-1}(0)$ are irreducible and the completions are pseudominimal, $\bar F_0$ and $\bar F_0'$ are the unique $(-1)$-vertices of $\Gamma_0(\bar\pi)$ and $\Gamma_0(\bar\pi')$, respectively.
These are the bridges of the unique feathers $\cF\subset \Gamma_0(\bar\pi)$ and $\cF'\subset\Gamma_0(\bar\pi')$, respectively.
 The chains $\cF\ominus\bar F_0$ and $\cF' \ominus\bar F_0'$ correspond to the exceptional divisors of the minimal resolutions of the cyclic quotient singularities $(X,x)$ and $(X',x')$, respectively. By Lemma \ref{lem: isom-sing} one has $(X,x)\cong (X',x')$. Hence $\cF\cong\cF'$ as ordered chains. 
 
 For the non-degenerate affine toric surface $X$ the extended graph $\Gamma_{\rm ext}$ of $(\bar X, D)$ is a chain (\cite[Lem.\ 2.20]{FKZ-completions}). Hence also the fiber tree $\Gamma_0(\bar\pi)$ is. The tip, say, $R$ of  the chain $\Gamma_0(\bar\pi)$ which is not a tip of $\cF$ meets the section at infinity $S$. Hence $R$ has multiplicity 1 in $\bar\pi^*(0)$. It follows that the chain $\cB=\Gamma_0(\bar\pi)\ominus R$ can be contracted to a smooth point starting with the contraction of the unique $(-1)$-component $\bar F_0$, see Lemma \ref{lem: contracting-in-fiber}.
 
 \smallskip

\noindent {\bf Claim 3.} \emph{The extended graph $\Gamma_{\rm ext}'$ of $(\bar X', D')$  is a chain.}

\smallskip

\noindent \emph{Proof of Claim $3$.} It suffices to show that the fiber tree $\Gamma_0(\bar\pi')$ is a chain.  Suppose the contrary. Then the extremal linear branch $\cB'$ of $\Gamma_0(\bar\pi')$ which contains $\cF'$ is adjacent to a  branching vertex, say,  $R'$ of $\Gamma_0(\bar\pi')$. The contraction of $\cF'$
makes $R'$ a $(-1)$-vertex of degree 2 which contracts to a node of the resulting fiber. Hence $R'$ has multiplicity $m>1$ in $(\pi')^*(0)$. 

As follows from \cite[Lem.\ 4.7]{FKZ-completions} the isomorphism $\cF\cong\cF'$ can be extended to an isomorphism of the contractible chains $\cB\supset\cF$ and $\cB'\supset\cF'$ with the unique $(-1)$-vertices $\bar F_0$ and $\bar F'_0$, respectively. Since the multiplicity of $\bar F_0$ in $\pi^*(0)$ equals $d$  the one of $\bar F_0$ in  $(\pi')^*(0)$ equals $md>d$. However, the latter contradicts \eqref{eq:clgrp}. This proves the claim.

\smallskip

Since the extended graph $\Gamma_{\rm ext}'$ is a chain and $\cO_{X'}(X')^\times=\mathbb{k}^\times$, it follows that $X'$ is a Gizatullin surface.  Applying \cite[Lem.\ 2.20]{FKZ-completions} one concludes that $X'$ is a nondegenerate affine toric surface. \qed
\esit

The next lemma completes the proof of Theorem \ref{thm: Gm-surfaces-are-zar-factors}. 

\blem\label{lem: A1-fibration} 
Under the assumptions of {\rm \ref{sit: further-conventions}}
one has $X'\cong X$.
\elem

\bproof Two non-degenerate toric affine surfaces are isomorphic if and only if their singularities are. By Lemmas \ref{lem: toric-1} and \ref{lem: cyclic-factor}, $X$ and $X'$ are non-degenerate toric affine surfaces, and by Lemma \ref{lem: isom-sing} the singularities $(X,x)$ and $(X',x')$ are isomorphic. Hence $X\cong X'$.
\eproof

\section{
Zariski 1-factors}\la{sec:final}

\subsection{Stretching and rigidity of cylinders}

\bdefi[\emph{Combinatorial stretching}]\la{def: stretching} Let $B$ be a smooth affine curve.
Given an effective divisor $A=\sum_{i} a_ip_i\in\Div (B)$ where $a_i\in\Z_{\ge 0}$ and $p_i\in B$ we 
associate with $A$ a chain divisor $\cD(A)=\sum_{i} L(a_i)p_i$ where $L(a_i)$ is a chain with weights $[[-2,-2,\ldots,-2,-1]]$ of length $a_i$ if $a_i>0$ and  $L(0)=\emptyset$ otherwise. 

Let $\cD=\sum_{i} \Gamma_{i} p_i$ be a  graph divisor, see Definition \ref{def: graph-divisor}.
We let $(A.\cD)_{\bar m}=\cD'=\sum_{i} \Gamma'_{i} p_i$ where
 
\smallskip

\begin{itemize}
\item $\bar m=(m_1,\ldots)$ with $-1\le m_i\le {\rm ht}\,(\Gamma_{i})$;
\item 
$\Gamma'_{i}$ is obtained from $\Gamma_{i}$ by inserting  the chain $L_i$ above each vertex $v$ of $\Gamma_{i}$ on level $m_i$ if $m_i\ge 0$ and below the root $v_i$ if $m_i=-1$ so that the left end $l_i$ of $L_i$ becomes a vertex on level $m_i+1$  of $\Gamma'_{i}$ and its right end $r_i$
is joint with the vertices of $\Gamma_{i}$ on level $m_i+1$ over $v$  if $v$ is not a tip  of  $\Gamma_{i}$ and becomes a tip of  $\Gamma_{i}'$ otherwise. 
The weights change accordingly; the weight of $v$ decreases by 1
and the weight of $r_i$ becomes $-1-s(v)$ where $s(v)$ is the number of 
vertices on level $m_i+1$ in $\Gamma_{i}$ joint with $v$.
\end{itemize}

\smallskip

\noindent 
The transformation $\cD\mapsto (A.\cD)_{\bar m}$ will be called a \emph{combinatorial $(A,\bar m)$-stretching}. It is called a \emph{top-level stretching} if $m_i={\rm ht}\,(\Gamma_{b_i})$ $\forall i=1,\ldots,n$, and an \emph{$(A,\overline{-1})$-stretching} if $m_i=-1$ $\forall i$. 
A combinatorial  $(A,\bar m)$-stretching is called \emph{principal} if $A$ is a principal effective divisor, that is, $A=\div f$ where $f\in\cO_B(B)\setminus\{0\}$.
\edefi

\bdefi[\emph{Geometric stretching}]\label{def: u-modification} Let $\pi'\colon X'\to B$ and $\pi\colon X\to B$  be  two GDF surfaces over $B$.
An affine modification $\sigma: X'\to X$  over $B$ will be called  
a \emph{geometric  $(A,\bar m)$-stretching} where $m_i\ge 0$ $\forall i$ if its effect on the graph divisor $\cD(\pi)$ amounts to a combinatorial
$(A,\bar m)$-stretching $\cD(\pi')=(A.\cD(\pi))_{\bar m}$ as in Definition \ref{def: stretching}. One extends here $\cD(\pi)$ so that $\supp\cD(\pi)\supset\supp A$ adding the new terms $\Gamma_ip_i$ where $p_i\in\supp A\setminus\supp \cD(\pi)$ and $\Gamma_i=[[0]]$. 

A \emph{geometric  $(A,\overline{-1})$-stretching} inserts  the chain $[[-2,\ldots,-2,-1]]$  of length $a_i$ in the fiber tree $\Gamma_{p_i}(\pi)$ between the root and the section $S$ so that the $(-1)$-vertex of this chain becomes the root of the resulting fiber tree $\Gamma_{p_i}(\pi')$. A \emph{principal geometric  $(A,\overline{-1})$-stretching} with $A=\div f$ for $f\in\cO_B(B)\setminus\{0\}$ amounts to perform in (\ref{eq: seq-aff-modif-final}) an affine modification $X_0'\to X_0$ of $X_0=B\times\A^1$ with the divisor $(f\circ\pi)^*(0)$ and the center $f^*(0)\times\{0\}$. In other words, letting $\A^1=\Spec \mathbb{k}[u]$ one has  
\be\la{eq:-1-stretching} \cO_{X_0'}(X_0')=\cO_B(B)[u']\quad\mbox{where}\quad u'=u/f\,.\ee 
Therefore, $X_0'\cong_B B\times\A^1$. Performing the remaining fibered modifications in (\ref{eq: seq-aff-modif-final}) gives again the same surface $X=X_m$. However, this transformation affects the trivializing completion $(\hat X, \hat D)$ as in (\ref{eq: seq-contractions}). 
\edefi

\blem\la{lem: equal-hights} 
Consider a marked GDF surface  $\pi\colon X\to B$ along  with a marking $z\in\cO_B(B)$ where $z^*(0)=b_1+\ldots+b_n$, with a trivializing sequence  {\rm (\ref{eq: seq-aff-modif-final})},  and with the corresponding graph divisor $\cD(\pi)=\sum_{i=1}^n \Gamma_i b_i$. Then the following hold.
\begin{itemize}\item[(a)]
Performing a suitable principal geometric $(A,\overline{-1})$-stretching and extending {\rm \eqref{eq: seq-aff-modif-final}}  accordingly one may assume that 
\be\la{eq:rectifying-heights} \h(\Gamma_{i})=\h(\cD(\pi))\qquad\forall i=1,\ldots,n\,.\ee 
\item[(b)] 
Let, furthermore, $\pi\colon X\to B$ be a marked GDF $\mu_d$-surface. Then \eqref{eq:rectifying-heights} holds after performing a suitable principal $\mu_d$-equivariant $(A,\overline{-1})$-stretching where $A=\div f$ with a $\mu_d$-invariant function  $f\in\cO_B(B)\setminus \{0\}$.
\end{itemize}
\elem

\bproof (a)
Let $$D_\infty=\sum_{c_i\in\bar B\setminus B} c_i \quad\mbox{and}\quad D_0=\sum_{i=1}^n (m-m_i)b_i\quad\mbox{where}\quad m=\h\,(\cD(\pi))\quad\mbox{and}\quad m_i=\h(\Gamma_i)\,.$$ 
Let also $D(r)=rD_\infty-D_0$ where $r\gg 1$. The very ample linear system $|D(r)|$ defines an embedding 
\be\la{eq:emb} \Phi_{|D(r)|}\colon\bar B\hookrightarrow\PP^{N(r)}=\PP E\quad\mbox{where}\quad E=H^0(\bar B,\cO_{\bar B}(D(r)))^{\vee}\,.\ee
A general hyperplane section 
cuts out  on $\bar B$ a reduced effective divisor, say, $A'$ such that $\supp A'\subset B\setminus \supp z^*(0)$. Letting $A=A'+D_0\sim rD_\infty$ there exists a rational function $f$ on $\bar B$ with $\div f=A-rD_\infty\in\Div \bar B$. Then $A=\div\,(f|_B)$ is a principal effective divisor on $B$. It is easily seen that the $(A,\overline{-1})$-stretching over $X$ satisfies \eqref{eq:rectifying-heights}.

(b) Under the assumptions of (b) the divisors $D_0, D_\infty$, and $D(r)$ are $\mu_d$-invariant and the embedding \eqref{eq:emb} is equivariant with respect to a linear action of $\mu_d$ on $E$. Consider the character decomposition $E=\bigoplus_{\chi\in\mu_d^\vee} E_\chi$. Choose $\chi\in\mu_d^\vee$ such that the equivariant projection $E\to E_\chi$ yields a non-constant map $\bar B\to \PP E_\chi$. The reduced $\mu_d$-invariant effective divisor $A'$ cut out  on $\bar B$ by a general hyperplane section in $\PP E_\chi$  still satisfies $\supp A'\subset B\setminus \supp z^*(0)$. Define $f|_B\in\cO_B(B)$ as before.  The divisor $A=\div\,(f|_B)\in\Div (B)$ being $\mu_d$-invariant,  $f$ is a quasi-invariant of weight, say, $k\in\{0,\ldots,d-1\}$. Replacing $f$ by the $\mu_d$-invariant $\tilde f=z^{d-k}f$ one obtains a $\mu_d$-invariant principal divisor $$\tilde A=\div (\tilde f|_B)=A'+D'_0 +(d-k)\div z^*(0)\,.$$ Then $\tilde \cD(\pi)=(\tilde A.\cD(\pi))_{\overline{-1}}$ verifies \eqref{eq:rectifying-heights} with $\tilde \cD(\pi)$ instead of $\cD(\pi)$. 
\eproof

\brem\la{rem: extending-support} 
Extending a trivializing sequence \eqref{eq: seq-aff-modif-final} likewise this is done in the proof one introduces new special fibers. The resulting graph divisor $\tilde \cD(\pi)$ adopts a certain number of
new fiber graphs $\tilde\Gamma_{p_j}(\pi)=[[0]]p_j$ of hight 1 supported off $\supp (\div \,z)=\{b_1,\ldots,b_n\}$. So, the former marking $z$ cannot serve any longer as a marking.  
\erem

By virtue of the following lemma and the subsequent remarks, in certain cases a combinatorial stretching admits a simple geometric realization.

\blem\la{lem: geometrization} 
Let $\pi\colon X\to B$ be a marked GDF surface with a marking $z\in\cO_B(B)\setminus\{0\}$ and a graph divisor $\cD(\pi)=\sum_{i=1}^n \Gamma_i b_i$, and  let  $\fF$ be the set of leaves of $\cD(\pi)$. Given a subset $\fF_0$ 
of $\fF$ and an integer $s\gg 1$ choose a function $\tilde u\in\cO_X(X)$ such that
\begin{itemize}\item[(i)] $\tilde u\equiv u_F\mod z^s$ near any fiber component $F$  with $\bar F\in\fF_0$ and \item[(ii)] $\tilde u\equiv z^s\mod z^{s+1}$ near any $F$ with $\bar F\in\fF\setminus\fF_0$.
\end{itemize} Choose also a function $f\in\cO_B(B)$ such that $\supp (\div f)\subset \supp(\div z)$. 
Letting $$X'=\Spec\cO_X(X)[\tilde u/f]$$ consider the morphisms $\pi'\colon X'\to B$ and $\sigma\colon X'\to_B X$  associated to the natural embeddings $\cO_B(B)\subset\cO_X(X)\subset\cO_{X'}(X')$. 
Then $\pi'\colon X'\to B$ is a GDF surface with the graph divisor $\cD(\pi')=\sum_{i=1}^n\Gamma_i' b_i $ obtained from $\cD(\pi)$ by attaching to each leaf $\bar F\in \fF_0$ with $\pi(F)=b_i$ a chain $L_i=[[-2,\ldots,-2,-1]]$ of length $a_i:=\ord_{b_i}(f)$. Furthermore, $\sigma$ induces a morphism of graphs  $\Gamma_i'\to\Gamma_i$ contracting  the chains $L_i$, $i=1,\ldots,n$.
\elem

\bproof 
For a component $F$ of $\pi^{-1}(b_i)$ one has 
$(f\circ\pi)|_{U_F}\equiv c_iz^{a_i}\mod\, z^{a_i+1}$  where $c_i\neq 0$. Due to (ii) if $\bar F\in\fF\setminus\fF_{0}$ then the morphism $\sigma\colon X'\to X$ is an isomorphism over $U_F$. 

If $\bar F\in\fF_{0}$ then $\sigma\colon X'\to X$ restricted to $\sigma^{-1}(U_F)$ is an $a_i$-iterated fibered modification with a reduced divisor $F$ and center at the maximal ideal $(z,u_F)$ and its infinitesimally near points. 
Indeed, let $P_0=\{z=0,\,u_F=0\}$ be the origin of the local coordinate system $(z,u_F)$ in $U_F$ near $F$. The affine modification 
of $X$ along $z=0$ with center $P_0$ amounts in $U_F$ to the extension $\cO_{U_F}(U_F)\hookrightarrow \cO_{U_F}(U_F)[u_F/z]$. Iterating one obtains the extension $\cO_{U_F}(U_F)\hookrightarrow \cO_{U_F}(U_F)[u_F/z^{a_i}]$.
Due to (i) the latter extension coincides with $\cO_{U_F}(U_F)[{\tilde u}/f]$. Hence the fiber tree $\Gamma_i'$ is obtained from $\Gamma_i$ by joining the left end of the chain $L_i=[[-2,\ldots,-2,-1]]$ of length $a_i$ to the leaf $\bar F$ of $\Gamma_i$.  By contrast, for $\bar F\in\fF\setminus\fF_0$ one has $\cO_{U_F}(U_F)=\cO_{U_F}(U_F)[\tilde u/z]$.
\eproof

\brems\la{rem: realization}
1. If in Lemma \ref{lem: geometrization} one has $\fF_0=\fF$ and any leaf $\bar F\in\fF$ with $\pi(F)=b_i$ is on the same level $m_i:=\h(\Gamma_i)$, $i=1,\ldots,n$ then $\sigma\colon X'\to X$ is the principal top-level $(A,\bar m)$-stretching where $A=\div f$ and $\bar m=(m_1,\ldots,m_n)$. One can choose a new marking $z\in\cO_B(B)$ for $X$ such that $\supp(\div f)\subset\supp (\div z)$.

2.  Let $\pi\colon X\to B$ in Lemma \ref{lem: geometrization} be a marked GDF $\mu_d$-surface, and let as before $X'=\Spec\cO_X(X)[\tilde u/f]$. Assume that the functions $\tilde u$ and $f$ are $\mu_d$-quasi-invariants. Then $\pi'\colon X'\to B$ is a marked GDF $\mu_d$-surface and $\sigma\colon X'\to X$ is $\mu_d$-equivariant. 

3.
Let $A=\div f$ where $f\in\cO_B(B)$ with $\supp(\div f)\subset\supp (\div z)$, and let $s\gg 1$. Consider an affine modification $\sigma\colon X'\to X$ over $B$ such that its effect on the graph divisors is the same as in Lemma \ref{lem: geometrization}. Then one has $\cO_{X'}(X')=\cO_X(X)[\tilde u/f]$ for a function $\tilde u\in\cO_X(X)$ such that  
\begin{itemize}
\item[{\rm (i$'$)}] $\tilde u\equiv c_F\cdot (u_F-p_{F}(z)) \mod\, z^s$ in $U_F$ for $\bar F\in\fF_{0}$, $\pi(F)=b_i$ where $c_F=(f/z^{a_i})|_F$ and $p_F\in \mathbb{k}[z]$ is a polynomial of degree $\le a_i-1$ whose coefficients encode the sequence of infinitely near centers of blowups over $F$; 
\item[{\rm (ii$'$)}] $\tilde u\equiv z^s \mod\, z^{s+1}$ in $U_F$ for $F\in\fF\setminus\fF_{0}$. 
\end{itemize}

4. Let $\pi'\colon X'\to B$ and $\pi''\colon X''\to B$ be two marked GDF $\mu_d$-surfaces  equivariantly dominating $X$ over $B$ with the same effect on graph divisors. Then by Theorem \ref{thm: GDF-cancellation-fixed-graph} there is a $\mu_d$-equivariant isomorphism of cylinders $\cX'(k)\cong_{\mu_d,B}\cX''(k)$ $\forall k\in\ZZ$. Thus, different geometric realizations of the same combinatorial stretching give rise to isomorphic cylinders. 
\erems
 
\bcor\la{cor: equiv-stretching} 
Under the assumptions of Lemma {\rm \ref{lem: geometrization}} let $\pi\colon X\to B$  be a marked GDF $\mu_d$-surface, and let $\fF_0\subset\fF$ be a $\mu_d$-invariant set of top level leaves of $\cD(\pi)$.  Then there exists a marked GDF $\mu_d$-surface $\pi'\colon X'\to B$ and an equivariant affine modification $\sigma\colon X'\to X$ over $B$ which amounts to  attaching a chain $L_F=[[-2,\ldots,-2,-1]]$ of the same length $a\ge 1$ to every leaf $\bar F\in\fF_0$ of the graph divisor $\cD(\pi)$. 
\ecor

\bproof
It suffices to apply Remark \ref{rem: realization}.2 choosing a $\mu_d$-quasi-invariant function $\tilde u\in\cO_X(X)$ of weight $-m$ as in Corollary \ref{cor: q-inv} and letting $f=z^a$.
\eproof

Due to the next proposition, a principal top-level stretching does not affect the cylinder up to an isomorphism over $B$.  

\bprop\label{prop: isomorphism} Let $\pi\colon X\to B$ be a marked GDF surface with a marking $z\in\cO_B(B)$ where $\div z=b_1+\ldots+b_n$. 
Suppose that \begin{itemize}\item[$(\alpha)$]  for $i=1,\ldots,n$ the leaves of $\Gamma_{b_i}(\pi)$ are on the same level $m_i$.
\end{itemize} 
Let $\sigma\colon X'\to X$ be a principal top-level $(A,\bar m)$-stretching as in Definition {\rm \ref{def: u-modification}} where 
$A=\div f$ for some $f\in\cO_B(B)\setminus\{0\}$. 
 Then for any
$s\gg 1$ 
there is an isomorphism of cylinders
$\varphi\colon \cX\stackrel{\cong_B}{\longrightarrow} \cX'$ such that
for every pair of special fiber components $F$ in $X$ and $F'$ in $X'$ with
$\phi(F\times\A^1)=F'\times\A^1$
one has\footnote{We let  $\varphi_*(f,g,h)=(\varphi_*f,\varphi_*g,\varphi_*h)$ where $\varphi_*=(\varphi^{-1})^*$.}
\be\label{eq: switch}
(\varphi|_{U_F\times\A^1})_*\colon (z,u_F,v)\mapsto (z,u_{F'},v') \mod\, z^{s}\,
\ee
in suitable  natural coordinates $(z,u_F,v)$ and $(z,u_{F'},v')$    in the standard affine charts $U_F\times\A^1\subset \cX$ and $U_{F'}\times\A^1\subset \cX'$, respectively.  
\eprop

\bproof (a) Choosing a new marking $z\in\cO_B(B)$ for $X$ one may suppose that $\supp(\div f)\subset\supp (\div z)$ and ($\alpha$) still holds. Performing a suitable principal $(A,\overline{-1})$-stretching as in  Lemma \ref{lem: equal-hights}(a) one may assume that 

\smallskip

\begin{itemize}\item[$(\alpha_0)$] \emph{ for all $i=1,\ldots,n$ the leaves of $\Gamma_{b_i}(\pi)$ are on the same level $m=\h(\cD(\pi))$. }
\end{itemize}

\smallskip

 Consider the Asanuma modification of the second kind  $\kappa\colon \cX''\to \cX$ associated with $f$ (see Definition \ref{defi: asanuma}), that is, $$\cO_{\cX}(\cX)\subset\cO_{\cX''}(\cX'')=\cO_{X}(X)[v/f]\,.$$ 
  Lemma \ref{lem: As-2d-kind}(a) provides an isomorphism
  \be\la{eq:beta} \beta\colon\cX\stackrel{\cong_B}{\longrightarrow} \cX''\quad\mbox{where}\quad \beta_*\colon (z,u_F,v)\mapsto (z,u_{F''},v'')\,\ee 
in suitable natural local coordinates. We claim that there is an isomorphism $\cX'\cong_B \cX''$.
Indeed, due to Remark \ref{rem: realization}.4 and condition ($\alpha_0$) one may suppose that $$\cO_{\cX'}(\cX')\cong_{\mathbb{k}[z]}\cO_{\cX}(\cX)[\tilde u/f]$$ where $\tilde u\in\cO_X(X)$ verifies conditions (i) and (ii) of Lemma \ref{lem: geometrization} with $\fF_0$ being the set of top level fiber components in $X$.
Thus, it suffices to establish an isomorphism 
\be\label{eq:iso-mm} \cO_{\cX}(\cX)[\tilde u/f] \cong_{\cO_B(B)}\cO_{\cX}(\cX)[v/f]\,.\ee 

By Lemma \ref{lem: tau} there exists $\tau\in\SAut_B\, \cX$ such that
\be\label{eq:iso-mm0} (\tau |_{U_F\times\A^1})_*\colon (z,u_F,v)\mapsto (z,v,-u_F) \mod\, z^s\ee for any top level fiber component $F$ in $X$, see (\ref{eq: tau-11}). By (i) of Lemma \ref{lem: geometrization}, $\tilde u\equiv u_F \mod\, z^s$ in $U_F$. Due to condition ($\alpha_0$) all the components of $f^*(0)$ in $\cX$ are on the top level. 
Therefore,  $\tau_*$ transforms the ideal $I=(v,f)\subset \cO_{\cX}(\cX)$ to $I'=(\tilde u,f)\subset \cO_{\cX}(\cX)$ preserving the principal ideal $(f)$.  By Lemma \ref{lem: preserving-isomorphisms}, 
$\tau$  induces an isomorphism $\tilde\tau\colon\cX'\stackrel{\cong_B}{\longrightarrow}\cX''$ which gives \eqref{eq:iso-mm}, and so, proves our claim.

Let $a$ be the maximal order of zeros of $f$.  
Letting $\tilde u'=\tilde u/f$ and $v''=v/f$  one obtains 
 \be\la{eq:iso-mm1} \tau_*\colon (z,\tilde u',v'')\mapsto (z,v'',-\tilde u') \mod\, z^{s-a}\,.\ee

Let $\cF''=\tilde\tau(\cF')\subset\cX''$.
Consider the standard affine charts $U_{F'}\times\A^1$ in $\cX'$ and $U_{F''}\times\A^1$ in $\cX''$ with local coordinates $(z,u_{F'},v')$ and $(z,u_{F''},v'')$, respectively, where 
\be\la{eq:loc-coord} u_{F''}=u_F,\,\,\, v'=v,  \quad\mbox{and}\quad u_{F'}=u_F/f\equiv \tilde u'\mod\, z^{s-a}\,.\ee
From \eqref{eq:iso-mm0} and \eqref{eq:iso-mm1} one can deduce
$$\tilde\tau\colon\cX'\stackrel{\cong_B}{\longrightarrow}\cX'',\qquad\tilde\tau_*\colon (z,u_{F'},v')\mapsto (z,v'',-u_{F''}) \mod\, z^{s-a}\,.$$ Then by \eqref{eq:beta} and \eqref{eq:loc-coord} one gets
$$\beta^{-1}\circ\tilde\tau\colon \cX'\stackrel{\cong_B}{\longrightarrow} \cX,\quad (z,u_{F'},v')\mapsto (z,v,-u_F) \mod\, z^{s-a}\,,$$  
and so,
$$\tau^{-1}\circ \beta^{-1}\circ\tilde\tau\colon \cX'\stackrel{\cong_B}{\longrightarrow} \cX,\quad (z,u_{F'},v')\mapsto (z,u_F,v) \mod\, z^{s-a}\,.$$
Thus, the isomorphism $\phi:=\tilde\tau^{-1}\circ\beta\circ\tau\colon\cX\stackrel{\cong_B}{\longrightarrow} \cX'$ verifies \eqref{eq: switch} with $s$
replaced  by $s-a$.
\eproof

As an illustration, we apply Proposition \ref{prop: isomorphism} to the Danielewski examples. 

\bexa[\emph{Danielewski surfaces revisited}]\label{exa: u-modif-Dan} Recall that the $n$th Danielewski surface $X_n$ is given in $\A^3$ with coordinates $(z,u,t_n)$ by equation $z^nt_n-u^2+1=0$, see Example \ref{exa: dani}. The function $t_n=t_0/z^n$ yields a natural affine coordinate on each component of the special fiber $z=0$. 
The morphism $\rho_n\colon X_n\to X_{n-1}$ is given by $$(z,u,t_n)\mapsto (z,u,t_{n-1}=zt_n)\,.$$  For any $i\in\{1,\ldots,n-1\}$ the morphism $\rho_{i+1}\circ\ldots\circ\rho_n\colon X_n\to X_{i}$ is a principal top level stretching. Proposition \ref{prop: isomorphism} and Corollary \ref{cor: non-isomorphism} below provide an alternative proof of the Danielewski--Fieseler Theorem (\cite{Da}, \cite{Fi}) which says that the cylinders $X_n\times\A^1$, $n\in\N$, are all isomorphic whereas the surfaces $X_n$ and $X_m$ are not if $n\neq m$.  
\eexa 

\brem\la{rem:added} If, by chance, a GDF surface $\pi\colon X\to B$ verifies condition ($\alpha_0$) with respect to some trivializing sequence \eqref{eq: seq-aff-modif-final} and some marking $z\in\cO_B(B)$ then $X$ admits a free $\mathbb{G}_a$-action along the fibers of $\pi$. In this case the conclusion of Proposition \ref{prop: isomorphism} can be derived by applying the Danielewski trick, see Section \ref{ss:DF-construction}. 
\erem

Next we provide an equivariant version of Proposition \ref{prop: isomorphism} with a similar proof. To avoid a repetition we omit certain details of the proof.

\bprop\la{prop:equiv-stretching}
Let $\pi\colon X\to B$ be a marked GDF $\mu_d$-surface with a marking $z\in\cO_B(B)\setminus\{0\}$ of weight $1$ verifying condition $(\alpha)$, see Proposition {\rm\ref{prop: isomorphism}}.
Then for any $k\in\ZZ$ and $l\in\NN$ there exist a $\mu_d$-equivariant principal  top level $(A,\bar m)$-stretching $\sigma\colon X'\to X$ where $A=\div\,z^{ld}$
and a $\mu_d$-equivariant isomorphism of cylinders $\varphi\colon \cX(k)\stackrel{\cong_{\mu_d,B}}{\longrightarrow} \cX'(k)$ verifying \eqref{eq: switch} for any component $F$ of $z^*(0)$ in $X$.
\eprop

\bproof
By virtue of Lemma \ref{lem: equal-hights}(b) one may suppose that all the components of $z^*(0)$ in $X$ are on the same  top level $m$, that is, the graph divisor $\cD(\pi)$ verifies condition ($\alpha_0$), see the proof of Proposition \ref{prop: isomorphism}.
Let $\tilde u\in\cO_X(X)$ be a $\mu_d$-quasi-invariant of weight $-m$ satisfying 
conditions {\rm (i)} and {\rm (ii)} of Corollary {\rm \ref{cor: q-inv}} with $\fF_0$ being the set of (top level)  components of $z^*(0)$ in $X$. 

The iterated Asanuma modification  of the second kind $\cX''(k)\to\cX(k)$, $v''\mapsto v=z^{ld}v''$, is $\mu_d$-equivariant along with the isomorphism 
$$\beta_k\colon\cX(k)\stackrel{\cong_{\mu_d,B}}{\longrightarrow}\cX''(k),\qquad v\mapsto v''\,.$$

By Lemma \ref{lem: tau-1}  one can find a $\mu_d$-equivariant automorphism $\tau\in\SAut_B \cX(-m)$ which interchanges modulo $z^s$, up to a sign, the functions $\tilde u$ and $v$ of the same weight $-m$ and leaves $z$ invariant. By Lemma \ref{lem:  lift}, $\tau$ admits a lift  to a $\mu_d$-equivariant isomorphism 
$$\tilde\tau\colon \cX'(-m)\stackrel{\cong_{\mu_d,B}}{\longrightarrow}\cX''(-m)\,$$ verifying \eqref{eq:iso-mm1} for $f=z^{ld}$. Now the composition $\phi_{-m}=\tilde\tau^{-1}\circ\beta_{-m}\circ\tau$ yields an isomorphism $\cX(-m)\stackrel{\cong_{\mu_d,B}}{\longrightarrow} \cX'(-m)$ verifying \eqref{eq: switch}.  By Lemma 5.4(c) one may replace the weight $-m$ by a given weight $k$. This does not affect \eqref{eq: switch} up to replacing $s$ by $s-d$. 
\eproof

We use below the following auxiliary fact.

\blem\label{lem: vertices} Let  $X$ be a normal affine surface that admits  an $\A^1$-fibration $X\to C$ over a smooth affine curve $C$, and let $\bar X\to\bar C$ be a pseudominimal completion of $X\to C$ with extended graph $\Gamma_{\rm ext}$. Then the number $v(\Gamma_{\rm ext})$ of vertices of $\Gamma_{\rm ext}$ does not depend on the choice of an $\A^1$-fibration on $X$ over an affine base. So, $v(X):=v(\Gamma_{\rm ext})$ is an invariant of $X$.  \elem

\bproof Recall (see \cite[Def.\ 2.16]{FKZ}) that every feather component $F$ of the extended divisor $D_{\rm ext}$ is born  under a blowup at a smooth point of the boundary divisor $D=\bar X\setminus X$. The unique component $D_i$ of $D$ containing the center of this blowup is called the {\em mother component} of $F$. The {\em normalization procedure} as defined in \cite[Def.\ 3.2]{FKZ} replaces  $\Gamma_{\rm ext}$ by the {\em normalized extended graph} $\Gamma_{\rm ext, norm}$ such that any feather component $F$ in $\Gamma_{\rm ext}$ becomes an extremal $(-1)$-vertex in $\Gamma_{\rm ext, norm}$ attached at its mother component $D_i$. Under this procedure the total number of vertices remains the same: $v(\Gamma_{\rm ext, norm})=v(\Gamma_{\rm ext})$. Furthermore, these graphs are assumed to be {\em standard}; the standardization procedure does not affect the number of vertices either, see \cite[\S 1]{FKZ}. By \cite[Thm.\ 3.5]{FKZ} the standard normalized extended graph $\Gamma_{\rm ext, norm}$ of $X$ is unique, that is, it does not depend on the choice of an $\A^1$-fibration on $X$ over an affine base, unless $X$ is a Gizatullin surface. In the latter case the minimal dual graph $\Gamma$ of $D$ is linear and $\Gamma_{\rm ext, norm}$ is unique up to a
{\em reversion} $\Gamma_{\rm ext, norm}\rightsquigarrow \Gamma_{\rm ext, norm}^\vee$. However, the reversion neither changes the number of vertices in $\Gamma$ nor does it in $\Gamma_{\rm ext, norm}$. The latter is due to the {\em Matching Principle} (\cite[Thm.\ 3.11]{FKZ}). According to this principle there is  a bijection between the feather components of $\Gamma_{\rm ext, norm}$ and $ \Gamma_{\rm ext, norm}^\vee$ along with their mother components. In conclusion, $v(\Gamma_{\rm ext})=v(\Gamma_{\rm ext, norm})$ is an invariant of the surface $X$.
\eproof

\bcor\la{cor: non-isomorphism}
Let $\theta: X'\to X$ be a principal geometric top-level $(A,\bar m)$-stretching between two GDF surfaces $\pi'\colon X'\to B$ and $\pi\colon X\to B$ where $A=\div f$, $f\in\cO_X(X)\setminus\{0\}$. Suppose that $f(b)=0$ for some $b\in B$ such that the fiber $\pi^{-1}(b)$ is reducible. Then $X'\not\cong X$.
\ecor

\bproof Consider 
a trivializing completion $(\hat X,D)$ for $\pi\colon X\to B$ as in \ref{rem: extended-to-completions}.1. Suppose that its degenerate fibers are situated over the points $b_1,\ldots,b_n\in B$. This completion is pseudominimal if and only if for any $i\in\{1,\ldots,n\}$ the only $(-1)$-vertices of $\Gamma_{b_i}(\pi)$ are leaves, that is, the root $v_i$ of 
 the fiber tree $\Gamma_{b_i}(\pi)$ is not a $(-1)$-vertex, see Lemma \ref{lem: standard GDF-procedure}(c). If $v_i$ is a $(-1)$-vertex then $v_i$ is a tip of an extremal linear branch $\cB_i$ of $\Gamma_{b_i}(\pi)$. There is an alternative: either $\Gamma_{b_i}(\pi)=\cB_i$ is a chain $[[-1,-2,\ldots,-2,-1]]$, or $\cB_i$ is a branch $[[-1,-2,\ldots,-2]]$  at a branching vertex $v$  of $\Gamma_{b_i}(\pi)$ with weight $\le -3$. In the former case, $\Gamma_{b_i}(\pi)$ can be contracted to a single $(0)$-vertex. In the latter case, contracting $\cB_i$ in $\Gamma_{b_i}(\pi)$  one gets a pseudominimal tree $\Gamma_{b_i, {\rm min}}(\pi)$  with $v$ as  the root. Performing these contractions of the branches $\cB_i$ for all $i=1,\ldots,n$ yields a pseudominimal SNC completion $(\bar X_{\rm min}, D_{\rm min})$ of $X$. 
 
 Notice that \begin{itemize}\item $\cB_i\subset\Gamma_{b_i}(\pi)\subset\Gamma_{b_i}(\pi')$; \item
$\Gamma_{b_i}(\pi)$ is a chain if and only if the fiber $\pi^{-1}(b_i)$ is irreducible, if and only if $\Gamma_{b_i}(\pi')$ is. \end{itemize}
By our assumption the fiber $\pi^{-1}(b_i)$ is reducible for some $i\in\{1,\ldots,n\}$. Therefore, $\Gamma_{b_i}(\pi)$ is not a chain. Then the (eventual) contraction of $\cB_i$ in both $\Gamma_{b_i}(\pi)$ and $\Gamma_{b_i}(\pi')$ leads to two pseudominimal rooted trees $\Gamma_{b_i, {\rm min}}(\pi)$ and $\Gamma_{b_i,  {\rm min}}(\pi')$ where the number of vertices in $\Gamma_{b_i,  {\rm min}}(\pi')$ is larger by $a_i=\ord_{b_i}(f)>0$ than the one in $\Gamma_{b_i,  {\rm min}}(\pi)$. In turn, the simultaneous contractions in all the special fibers as described above lead to the
 pseudominimal completions $(\bar X_{\rm min},D_{\rm min})$  and $(\bar X'_{\rm min},D'_{\rm min})$ of  $\pi\colon X\to B$ and $\pi'\colon X'\to B$, respectively, such that the corresponding extended graphs $\Gamma_{\rm ext, min}$ and $\Gamma_{\rm ext, min}'$ have different number of vertices. 
By Lemma \ref{lem: vertices} this number is invariant upon isomorphisms of affine surfaces. Hence $X\not\cong X'$, as claimed. 
\eproof

\subsection{Non-cancellation for 
GDF surfaces}\label{ss: non-cancellation for 
GDF surfaces}

The main result of this section is the following theorem.

\bthm\label{thm: GDF 1-factors}  Let $\pi\colon X\to B$ be a GDF surface. Then $X$ is a Zariski $1$-factor if and only if $\pi\colon X\to B$ admits a line bundle structure. \ethm

\bproof The `if' part follows from Proposition \ref{prop: line-bundle}. As for the `only if' part, see the following version of Proposition \ref{prop: isomorphism} which avoids assumption ($\alpha$).\eproof

\bprop\la{prop: without-alpha} 
Suppose that a GDF surface $\pi\colon X\to B$  has a reducible fiber. Then there exists a sequence of pairwise non-isomorphic GDF surfaces $\pi_{Y_k}\colon Y_k\to B$ with cylinders isomorphic over $B$ to the one of $X$: $\cY_k\cong_B\cX$ $\forall k\in\NN$. 
\eprop

\bproof 
Fix a trivializing well ordered sequence \eqref{eq: seq-aff-modif-final} of affine modifications with $X=X_m$. Let $l\in\{1,\ldots,m\}$ be the minimal index such that
$\pi_l\colon X_l\to B$ in \eqref{eq: seq-aff-modif-final} has a reducible fiber, say, $\pi_l^{-1}(b_1)$ where $b_1\in B$. So, the graph divisor $\cD(\pi_{l-1})$ is a chain divisor. Hence $\pi_l\colon X_l\to  B$ verifies condition ($\alpha$) of Proposition \ref{prop: isomorphism}.

Consider a principal top-level $(A,\bar m)$-stretching  $\sigma\colon X_l'\to X_l$ where $A=\div z$. According to Proposition \ref{prop: isomorphism} for any $s\gg 1$ there exists an isomorphism of cylinders $\varphi_l\colon \cX_l\stackrel{\cong_B}{\longrightarrow} \cX_l'$ satisfying (\ref{eq: switch}). 
To any component $F\subset X_l$ of $\pi_l^{-1}(b_i)$ there corresponds a unique 
component $F'\subset X_1'$ of ${\pi'_1}^{-1}(b_i)$ such that $\sigma(F')=P_{F'}\in F$.  Inspecting the proof of Proposition \ref{prop: isomorphism} we see that $\varphi_l(\cF)=\cF'$ where $\cF=F\times\A^1\subset\cX_l$ and $\cF'=F'\times\A^1\subset\cX_l'$. Due to (\ref{eq: switch}) one has
$$\varphi_l(F\times\{0\})=F'\times\{0\}\,.$$
   We construct  by recursion a sequence of GDF surfaces 
\be\label{eq: new-sequence} X'_m \stackrel{\varrho'_m}{\longrightarrow} X'_{N-1} \stackrel{}{\longrightarrow}  
\ldots \stackrel{}{\longrightarrow}  X'_{l+1}\stackrel{\varrho'_{l+1}}{\longrightarrow}   X'_l\,\ee
such that
\begin{itemize}\item[(i)] $\cX_j'\cong_B\cX_j$, $j=l,\ldots,m$;
\item[(ii)] 
$X_j'\not\cong X_j$  for any $j=l,\ldots,m$.
\end{itemize}

Let $\Sigma=\bigcup_{F} \Sigma_{F}$ be the center of the affine modification $\rho_{l+1}\colon X_{l+1}\to X_{l}$ from  (\ref{eq: seq-aff-modif-final}) where $\Sigma_F=\Sigma\cap F$. Set 
$$ \fF_0=\{F\,|\,\Sigma_{F}\neq\emptyset\} ,\quad\quad \fF_0'=\{F'\,|\,F\in\fF_0\}\,,$$ 
 $$\Sigma_{F'}\times\{0\}=
\varphi_l(\Sigma_{F}\times\{0\})\subset F'\times\{0\}\,,$$  and
$$
\Sigma'=\bigcup_{F'\in\fF_0'} \Sigma_{F'}
\subset X_l'\,.$$ 
Consider the fibered modification $\rho_{l+1}'\colon X_{l+1}'\to X_l'$  along the reduced divisor $z^{*}(0)$  with the reduced center $\Sigma'\subset X_l'$. 
Let $\tilde\rho_{l+1}\colon\cX_{l+1}\to\cX_l$ and $\tilde\rho_{l+1}'\colon\cX_{l+1}'\to\cX_l'$ be the Asanuma modifications of the first kind which correspond to $\rho_{l+1}$ and $\rho_{l+1}'$, respectively, see Lemma \ref{lem: as-trick}(a).
By construction,  $\varphi_l\colon\cX_l\stackrel{\cong_B}{\longrightarrow}\cX_l'$ 
sends the center and the divisor of $\tilde\rho_{l+1}$ to the center and the divisor of $\tilde\rho_{l+1}'$.  
By Lemma \ref{lem: preserving-isomorphisms}, $\varphi_l$ lifts to an isomorphism of  cylinders $\varphi_{l+1}\colon\cX_{l+1}\stackrel{\cong_B}{\longrightarrow}\cX_{l+1}'$. 
Likewise in \eqref{eq: congru} of Lemma \ref{lem: lift}, $\varphi_{l+1}$ verifies (\ref{eq: switch}) with $s$ replaced by $s-1$. Now one can apply the same argument to the isomorphism $\varphi_{l+1}\colon\cX_{l+1}\stackrel{}{\longrightarrow}\cX_{l+1}'$ instead of $\varphi_l\colon\cX_l\stackrel{}{\longrightarrow}\cX_l'$. By recursion, we arrive at a sequence (\ref{eq: new-sequence}) such that $\cX_i\cong_B\cX_i'$ for all $i=1,\ldots,m$. 

Let $Y_1=X_m' $ and $\pi_{Y_1}=\pi_{X_m'}$. To finish the proof it suffices to repeat the same construction with $z^k$ instead of $z$. This leads to a sequence of GDF surfaces 
$Y_k$, $k=1,2,\ldots$. By Lemma \ref{lem: vertices} these surfaces are pairwise non-isomorphic. The proof goes similarly as the one of Corollary \ref{cor: non-isomorphism}.
\eproof

The following is an equivariant version of Proposition \ref{prop: without-alpha}. 

\bprop\label{prop: GDF mu-d-1-factors} Let $\pi\colon X\to B$ be a marked GDF $\mu_d$-surface which has a reducible fiber.  Then there exists a sequence of pairwise non-isomorphic marked GDF $\mu_d$-surfaces $X^{(kd)}$, $k\in \NN$, whose  cylinders are $\mu_d$-equivariantly isomorphic over $B$: $$ \cX^{(kd)}\cong_{\mu_d,B}\cX\quad\forall k\in \NN\,.$$
\eprop

\bproof Consider a sequence
\eqref{eq: seq-aff-modif-final} of $\mu_d$-equivariant morphisms, and let $\pi_l\colon X_l\to B$ be the first member of \eqref{eq: seq-aff-modif-final} which has a reducible fiber. Proceeding as in the proof of Proposition  \ref{prop: without-alpha} we let  $X_l^{(kd)}$ be the GDF $\mu_d$-surface obtained from $X_l$ via a principal equivariant top-level
$(A_k,\bar m)$-stretching where $A_k=\div z^{kd}$,  $k\in\NN$. 

According to Proposition \ref{prop:equiv-stretching} for any $s\gg 1$ there is a $\mu_d$-equivariant isomorphism of cylinders $\phi_l\colon \cX_l(-l)\stackrel{\cong_{\mu_d,B}}{\longrightarrow} \cX_l^{(kd)}(-l)$  satisfying \eqref{eq: switch}. 
Repeating for any $k\in\N$ the construction from the proof of Proposition \ref{prop: without-alpha} in a $\mu_d$-equivariant fashion one arrives at a sequence of marked GDF $\mu_d$-surfaces $X^{(kd)}=X_m^{(kd)}$ with $\mu_d$-equivariantly isomorphic cylinders $\cX^{(kd)}(-m)\cong_{\mu_d,B} \cX(-m)$ where the isomorphisms satisfy \eqref{eq: switch}.  
By Lemma \ref{lem: As-2d-kind}(c) there are $\mu_d$-equivariant isomorphisms  $ \cX^{(kd)}(0)\cong_{\mu_d,B}\cX(0)\,.$ 

Arguing as in the proof of  Corollary \ref{cor: non-isomorphism} one can see that the number of vertices of the corresponding  pseudominimal extended graphs $\Gamma_{\rm ext, min}^{(kd)}$  strictly increases with $k$. By Lemma \ref{lem: vertices} the GDF surfaces $X^{(kd)}$, $k=0,1,\ldots$, are pairwise non-isomorphic. 
\eproof

\subsection{Extended graphs of Gizatullin surfaces}
The covering trick can be  extended to completions as follows.

\bsit[\emph{Covering trick for a completion}]\label{nota: covering-trick-completion} Let $\pi_Y\colon Y\to C$ be an $\A^1$-fibration over an affine curve $C$, and let $\bar\pi_Y\colon\bar Y_{\rm resolved}\to\bar C$ be a pseudominimal resolved completion of  $\pi_Y\colon Y\to C$ with extended graph $\Gamma_{\rm ext}$. Contracting the exceptional divisor $E\subset\bar Y_{\rm resolved}$ of the minimal resolution of singularities of $Y$ yields a birational morphism $\sigma\colon\bar Y_{\rm resolved}\to\bar Y$ where $\bar Y\to\bar C$ is  a completion  of $\pi_Y\colon Y\to C$ with a simple normal crossing boundary divisor. Extending a branched covering $B\to C$ as in  \ref{sit: construction} to the smooth completions $\bar B \to \bar C$ consider the normalizations  of the cross-products $\bar Y_{\rm resolved}\times_{\bar C} \bar B$ and $\bar Y\times_{\bar C} \bar B$, the respective minimal desingularizations $\hat X_{\rm resolved}\to (\bar Y_{\rm resolved}\times_{\bar C} \bar B)_{\rm norm}$ and $\hat X\to(\bar Y\times_{\bar C} \bar B)_{\rm norm}$, and the induced $\PP^1$-fibrations $\hat X_{\rm resolved}\to\bar B$ and $\hat X\to \bar B$. Recall that the branched covering construction applied to $\pi_Y\colon Y\to C$ gives a GDF surface $\pi_X\colon X\to B$ as in Definition \ref{sit: construction}. The  surface $X$ is smooth, see Lemma \ref{lem: standard GDF-procedure}(b). Hence $\hat X\to\bar B$ is
 a completion  of $X\to B$ dominated by $\hat X_{\rm resolved}\to\bar B$. The induced morphism $\hat \sigma\colon \hat X_{\rm resolved}\to\hat X$ contracts the total transform of the exceptional divisor $E$ of $\sigma\colon\bar Y_{\rm resolved}\to\bar Y$.
\esit

\bsit\label{sit: Gizatullin} Recall (see e.g., \cite{FKZ} and Section \ref{ss:ML}) that a Gizatullin surface $X$ is a normal affine surface of class $(\ML_0)$. Such a surface $X$ admits two different $\A^1$-fibrations over the affine line $\A^1$; in particular, $X\not\cong\A^1_*\times\A^1$. For any $\A^1$-fibration  $\pi\colon X\to C$ over a smooth affine curve $C$ one has $C\cong\A^1$ and $\pi$ has at most one degenerate fiber. One may assume that this is the fiber $\pi^{-1}(0)$.
\esit

In the proof of Theorem \ref{thm: Zariski-1-factors} we use the following fact. 

\blem\label{lem: Gizatullin} Let  $X$ be a Gizatullin surface. Then the following hold.
\begin{itemize}
\item[(a)] Let $\Omega(X)$ stand for the set   of isomorphism classes of the pseudominimal extended graphs $\Gamma_{\rm ext}$  of all possible $\A^1$-fibrations $\pi\colon X\to\A^1$ with the special fiber $\pi^{-1}(0)$. Then $\Omega(X)$ is finite. Hence there exists $d\in\N$ such that the multiplicities of the fiber  components of $\pi^*(0)$ in any such fibration divide $d$.
\item[(b)] Given an $\A^1$-fibration $X\to \A^1=C$ consider the GDF surface $\tilde X\to B$ obtained via the cyclic base change $\A^1=B\to C$, $z\mapsto z^d$ with $d$ as in {\rm (a)} and a subsequent normalization. Let $\tilde \Gamma_{\rm ext}$ be  the extended graph of a pseudominimal completion of $\tilde X$. Then the set $\tilde\Omega(X, d)$ of isomorphism classes of the graphs $\tilde \Gamma_{\rm ext}$ for all possible $\A^1$-fibrations $X\to \A^1=C$ is finite.
\end{itemize}
\elem

\bproof  (a) By Lemma \ref{lem:  vertices}
the graphs $\Gamma_{\rm ext}$ in $\Omega(X)$ have all the same number $v(X)$ of vertices. Notice that the number of non-isomorphic graphs on a given set of vertices is finite. Furthermore, given an $\A^1$-fibration $\pi\colon X\to \A^1$ along with a pseudominimal resolved completion $\bar\pi\colon\bar X\to\bar \PP^1$ the multiplicities of the fiber components of $\bar\pi^{-1}(0)$  can be deduced in a combinatorial way from the associated extended graph $\Gamma_{\rm ext}$. Hence there is $d\in\N$ divisible by all these multiplicities for all possible $\A^1$-fibrations $\pi\colon X\to \A^1$. 

To show (b) it suffices  to restrict to the $\A^1$-fibrations on $X$ with a fixed  pseudominimal extended graph $\Gamma_{\rm ext}$. Let $\pi\colon X\to C=\A^1$ be such an $\A^1$-fibration. Recall that $B\cong C\cong\A^1$,  the only possible  degenerate fiber of $\pi$ is $\pi^{-1}(0)$, and the base change  $B\to C$ is $z\mapsto z^d$. 

By \ref{nota: covering-trick-completion}
the extended graph $\hat \Gamma_{\rm ext}$ is dominated by $\hat \Gamma_{\rm ext,resolved}$. 
In turn, the pseudominimal extended graph $\tilde\Gamma_{\rm ext}$
in $\tilde\Omega(X,d)$ is dominated by $\hat \Gamma_{\rm ext}$ and also by $\hat \Gamma_{\rm ext,resolved}$.
This yields an upper bound for the number of vertices $v(\tilde\Gamma_{\rm ext})\le v(\hat \Gamma_{\rm ext,resolved})$. 
We claim that $v(\hat \Gamma_{\rm ext,resolved})$ is bounded above by a function depending only on $d$ and on $\Gamma_{\rm ext}$, and so, only on $X$, as desired.  

To show the claim notice that for any vertex of  $\Gamma_{\rm ext}$ there is  at most $d$ vertices of $\hat \Gamma_{\rm ext, resolved}$ such that the corresponding curves in $\hat X_{\rm resolved}$ dominate the one in $\bar X_{\rm resolved}$. Hence it suffices to find an upper bound on the number of the remaining vertices of $\hat \Gamma_{\rm ext, resolved}$ which correspond to the curves in $\hat X_{\rm resolved}$ contracted in $\bar X_{\rm resolved}$. 

Let $E'$ and $E''$ be two fiber components of the extended divisor $D_{\rm ext}$ with respective multiplicities $m'$ and $m''$ that meet
 in $\bar X$. Choose local coordinates $(x,y)$ in $\bar X$ centered at the intersection point  $E'\cap E''=\{p\}$ with $E'$ and $E''$ as the axes. Then the germ of the cross-product $\bar X\times_{\bar C} \bar B$ near $p$ is given by equation $z^d-x^{m'}y^{m''}=0$. Likewise, if $E''=S$ then the germ of $\bar X\times_{\bar C} \bar B$ near $p$ is given by equation
$z^d=x^{m'}$. Normalizing such a surface germ produces, in both cases,  at most $d$ cyclic quotient singularities of type uniquely determined by $d,m'$, and $m''$. The resolution graphs of these singular points are the Hirzebruch-Jung strings uniquely determined by $d$ and $\Gamma_{\rm ext}$. It follows that the number of vertices in the total preimage in $\hat \Gamma_{\rm ext, resolved}$ of the edge $[E,E']$ of  $\Gamma_{\rm ext}$ is bounded above in terms of $d$ and $\Gamma_{\rm ext}$.  Finally, $v(\hat \Gamma_{\rm ext, resolved})$ is bounded above by a function of $d$ and  $\Gamma_{\rm ext}$, as claimed. 
\eproof

\brem\label{rem: finite-number-components}
It follows that a Gizatullin surface $X$ admits at most a finite number of maximal families of pairwise non-equivalent $\A^1$-fibrations  $X\to\A^1$. 

There is a remarkable sequence  $(X_n)_{n\in\N}$ of Gizatullin surfaces called the {\em Danilov-Gizatullin surfaces}.  Given $n\in\NN$ there exists a deformation family $\cF_n\to\cS_n$  of $\A^1$-fibrations  $X_n\to\A^1$ which are pairwise non-equivalent modulo the $(\Aut X_n)$-action where $\dim\cS_n$ strictly  grows with $n$ (\cite[Thms.\ 1.0.1, 1.0.5, and Ex.\ 6.3.21]{FKZ0}). 
\erem

\subsection{Zariski 1-factors and affine $\A^1$-fibered surfaces}
The following is the main result of Section \ref{sec:final}. 

\bthm\label{thm: Zariski-1-factors} Let $\pi\colon X\to C$ be a normal $\A^1$-fibered affine surface over a smooth affine curve $C$. Then $X$ is a Zariski $1$-factor if and only if $\pi\colon X\to C$ admits a structure of a parabolic $\mathbb{G}_m$-surface. \ethm 

The ``if'' part follows from Theorem \ref{thm: Gm-surfaces-are-zar-factors}.
The ``only if'' part is proven in the next proposition.

\bprop\label{prop: only if} Let $\pi\colon X\to C$ be an $\A^1$-fibration on a normal affine surface $X$ over a smooth affine curve $C$. If $X$ is a Zariski $1$-factor then $\pi\colon X\to C$ admits a structure of a parabolic $\mathbb{G}_m$-surface.\eprop

\bproof  Assume that $X\to C$ does not admit a structure of a parabolic $\mathbb{G}_m$-surface.
We are going to construct an infinite sequence of  normal affine surfaces $X^{(nd)}$ non-isomorphic to $X$ such that the cylinders $\cX^{(nd)}$ and $\cX$ are isomorphic, thus showing that $X$ cannot be a Zariski 1-factor.

Consider all possible $\A^1$-fibrations $X\to Z$ on $X$ over smooth affine curves $Z$ along with the their pseudominimal extended graphs $\Gamma_{\rm ext}$. 
By Lemma \ref{lem: Gizatullin}(a)
the set $\Omega(X)$ of the isomorphism classes of graphs $\Gamma_{\rm ext}$ is finite. So, there is $d\in\NN$ which divides the multiplicities of the fiber components in any $\A^1$-fibration $X\to Z$. 

Applying to the given $\A^1$-fibration $X\to C$ the branched covering trick of Lemma \ref{lem: br-covering} of degree $d$
one obtains a marked GDF $\mu_d$-surface $\tilde X\to B$.  By Proposition \ref{prop: Gm-surfaces}, $\tilde X\to B$ does not admit a line bundle structure. 
By Proposition \ref{prop: GDF mu-d-1-factors}
there is a sequence 
of pairwise non-isomorphic marked GDF $\mu_d$-surfaces $\tilde X^{(kd)}\to B$ such that for all $k\in\N$ the cylinders $\tilde \cX^{(kd)}(0)$ and $\tilde \cX(0)$ are $\mu_d$-equivariantly  isomorphic over $B$ while  $v(\tilde X^{(kd)})\to\infty$ as $k\to\infty$ where $v$ stands as before for the number of vertices in the extended graph of a pseudominimal completion.  
Passing to the quotients under the $\mu_d$-actions yields a sequence  of  $\A^1$-fibered 
normal affine surfaces  $X^{(kd)}=\tilde X^{(kd)}/\mu_d\to C$ with  cylinders isomorphic over $C$: $$\tilde \cX^{(kd)}(0)/\mu_d\cong_C\tilde\cX(0)/\mu_d=\cX\quad \forall k\in\N\,.$$ 
We claim that under our assumptions the surface $X^{(kd)}$ is not isomorphic to $X$ for any $k\gg 1$. 
Suppose to the contrary that $X^{(kd)}\cong X$ for an infinite set $I$ of values of $k\ge 1$. Then $X$ admits at least two different $\A^1$-fibrations over affine bases, that is, $X$ is a Gizatullin surface. Indeed, otherwise any isomorphism $\phi\colon X^{(kd)}\stackrel{\cong}{\longrightarrow} X$ sends the $\A^1$-fibration $X^{(kd)}\to C$ to  the unique $\A^1$-fibration $X\to C$.  It can be lifted via the base change $B\to C$ and a normalization to an isomorphism 
$\tilde\phi\colon \tilde X^{(kd)}\stackrel{\cong}{\longrightarrow} \tilde X$. This gives a contradiction since $v(\tilde X^{(kd)})> v(\tilde X)$ for $k\gg 1$.

Thus, under our assumptions $X$ and also $X^{(kd)}\cong X$, $k\in I$, are Gizatullin surfaces. Hence $C\cong \A^1$,
and so, $B\cong\A^1$ too. By Lemma \ref{lem: Gizatullin}(b) the set $\tilde\Omega(X,d)=\tilde\Omega(X^{(kd)},d)$, $k\in I$, is finite.
In particular,  for any $k\in I$ the pseudominimal extended graph $\tilde\Gamma^{(kd)}_{\rm ext}$ associated with the GDF surface $\tilde\pi\colon\tilde X^{(kd)}\to B$ (which is a cyclic cover of $X^{(kd)}$) belongs to the finite set $\tilde\Omega(X^{(kd)},d)=\tilde\Omega(X,d)$. Since the set $I\subset\N$ is infinite this contradicts the fact that $v(\tilde\Gamma^{(kd)}_{\rm ext})=v(\tilde X^{(kd)})\to\infty$ as $k\to\infty$.  
Hence $X\not\cong X^{(kd)}$ for all  $k\gg 1$. \eproof

\section{Classical examples}\la{sec:examples}

In this section we analyse from our viewpoint  examples of non-cancellation due to Danielewski \cite{Da}, Fieseler \cite{Fi}, Wilkens \cite{Wi}, tom Dieck \cite{tD}, 
and Miyanishi--Masuda \cite{MM1}. We retrieve certain classification results for Danielewski-Fieseler surfaces in $\A^3$ due to Dubouloz and Poloni (\cite{DP0}, \cite{Pol}) and derive new explicite examples of non-cancellation. See also \cite[\S 4]{FM} for further examples. 

\bnota\la{ex: DS-again-2} Recall that a \emph{bush} is a rooted tree such that all the branches at the root vertex are chains, see Remark \ref{rem: DPD}.2.
Let $\Gamma_{d,m}$ be the bush with $d\ge 1$ branches of equal lengths $m\ge 1$. Let $\pi_{d,m}\colon X_{d,m}\to B=\Spec \mathbb{k}[z]\cong\A^1$ be a Danielewski-Fieseler surface with the unique special fiber $\pi_{d,m}^{-1}(0)$ such that $\Gamma_0(\pi_{d,m})\cong\Gamma_{d,m}$. Thus, the fiber $\pi_{d,m}^{*}(0)$ is reduced and has $d$ components $F_1,\ldots,F_d$, all on level $m$.\enota

\bexa\la{ex:DP} Let $X_{d,m,g}$ be a  surface 
given in $\A^3=\Spec \mathbb{k}[z,u,t]$ by an equation  
\be\label{eq: E1} z^mt- g_m(z,u)=0\quad\mbox{where}\quad g_m(z,u)= b_0(u) + b_1(u)z + \ldots + b_{m-1}(u)z^{m-1}\, \ee
for some $b_i\in \mathbb{k}[u]$, $i=1,\ldots,m-1$ with
$\deg b_i\le d-1$ and some monic polynomial $b_0\in \mathbb{k}[u]$ of degree $d$ with simple roots. Then $X_{d,m,g}$ is a surface of type $X_{d,m}$, see  \cite[\S 3.1]{DP0}.
 The surfaces $X_{d,m,g}$ were classified in \cite[Thm. 6.1(1)]{Pol}; see also \cite{Dai},   \cite{ML}, and \cite{MJP} for some particular cases.  For the reader's convenience we reproduce this classification with a new proof. 
\eexa

\bprop\la{prop: GDWilkens}
\begin{itemize}
\item[{\rm (a)}] For any surface $X_{d,m}$ as in {\rm \ref{ex: DS-again-2}} there exists a polynomial $g\in\mathbb{k}[z,u]$ as in Example {\rm \ref{ex:DP}} such that $X_{d,m}\cong_B X_{d,m,g}$.
Vice versa, any surface $X_{d,m,g}$ as in  {\rm \ref{ex:DP}} is a GDF surface $X_{d,m}$ as in {\rm \ref{ex: DS-again-2}}.
\item[{\rm (b)}] Up to isomorphism over $B=\Spec \mathbb{k}[z]$ the cylinder $\cX_{d,m,g}=:\cX_{d,m}$ does not depend on the choice of $g$ in {\rm (\ref{eq: E1})}.
Furthermore, $\cX_{d,m}\cong_B \cX_{d',m'}$ if and only if $d=d'$. 
\item[{\rm (c)}] If $X_{d,m,g}\cong X_{d',m',h}$ then $d=d'$ and $m=m'$. For $d,m\ge 2$ the following are equivalent:
\begin{itemize}\item[$\bullet$] 
$X_{d,m,g}\cong X_{d,m,h}$;
\item[$\bullet$]  there is a commutative diagram
\be
 \bdi\la{diagr:blowups}
X_{d,m,g}&\rTo<{}>{\cong} &  X_{d,m,h} \\
\dTo<{} &  & \dTo>{} \\
B&\rTo>{}<{\cong} & B\,\, 
\edi
\ee
\item[$\bullet$] 
there exist $\alpha,\,\lambda\in \mathbb{k}^*$ and $\beta,\,\gamma\in \mathbb{k}[z]$ with $\deg \beta\le m-1$ such that
$$h(z,u)= (g(\lambda z, \,\alpha u+ \beta (z))-\gamma (z))/\alpha^d\, $$ 
where $\gamma$ is uniquely defined by $d,m, \alpha, \lambda$, and  $\beta$ and  the affine transformation
$ u \mapsto \alpha u + \beta (0)$ sends the roots of $b_0=g(0,u)$ to the roots of $c_0:=h(0,u)$. 
\end{itemize}
\end{itemize}
\eprop

\bproof Consider a pseudominimal SNC completion $(\bar X_{d,m}, D_{d,m})$ of $X_{d,m}$ with projection $\bar\pi_{d,m}\colon\bar X_{d,m}\to\bar B=\PP^1$ extending $\pi_{d,m}$, the fiber at infinity $\bar F_\infty\subset D_{d,m}$, the section  at infinity  $S\subset D_{d,m}$, and 
the unique  reduced degenerate fiber $\bar\pi^*(0)=C_0+\sum_{i=1}^d\cB_i$ where $C_0$ is the root  of $\Gamma_0(\pi_{d,m})\cong\Gamma_{d,m}$ of weight $C_0^2=-d$, and $\cB_i=C_{i,1}+\ldots+C_{i,m-1}+\bar F_i$, $i=1,\ldots,d$ are chains of length $m$ with the sequence of weights $[[-2,\ldots,-2,-1]]$ and the $(-1)$-tips $\bar F_i$. 

Let $\sigma\colon \bar X_{d,m}\to \bar X_{d,m-1}$ be the contraction  
of $\bar F_1,\ldots,\bar F_d$. The image of $ C_{i,m-1}$ acquires the weight $-1$ on $\bar X_{d,m-1}$. 
Iterating this procedure leads to a sequence \eqref{eq: seq-contractions}:
\be\la{eq: E11}
\bar X_{d,m} \stackrel{\bar\varrho_m}{\longrightarrow} \bar X_{d,m-1} \stackrel{}{\longrightarrow}  
\ldots \stackrel{}{\longrightarrow}   \bar X_{d,1} \stackrel{\bar\varrho_1}{\longrightarrow} 
\bar X_{d,0}=\PP^1\times\PP^1\,
\ee along with the corresponsding trivializing sequence (\ref{eq: seq-aff-modif-final}) for $X_{d,m}$ where $\bar X_{d,l}$ is a pseudominimal completion of $X_{d,l}$ and $$\pi_{d,0}\colon X_{d,0}=\A^2\to B=\A^1,\qquad (z,u)\mapsto z\,.$$
The affine modification $X_{d,1}\to X_{d,0}=\Spec\mathbb{k}[z,u]$  has divisor $z=0$ and a reduced center consisting of $d$ distinct points, say, $(0,\alpha_1), \ldots , (0,\alpha_d)$. Let $b_0\in \mathbb{k}[u]$ be the monic polynomial of degree $d$ with simple roots $\alpha_1,\ldots,\alpha_d$. 
One has $$X_{d,1}\cong_B X_{d,1,b_0}\quad\mbox{where}\quad X_{d,1,b_0}=\{zt_1-b_0(u)=0\}\subset\A^3$$ is given by (\ref{eq: E1}) with $m=1$ and $t=t_1$. 

Assume by recursion that $X_{d,l}\cong_B X_{d,l,g_l}$ where $X_{d,l,g_l}$ is given in $\A^3=\Spec\mathbb{k}[z,u,t_l]$ by an equation $z^lt_l-g_l(z,u)=0$ as in  (\ref{eq: E1}).
The surface $X_{d,l+1}$ in (\ref{eq: seq-aff-modif-final}) is obtained from $X_{d,l}=X_{d,l,g_l}$ by a fibered modification  with the reduced divisor $z^*(0)$ consisting of $d$ disjoint components, say, $F_{1,l},\ldots,F_{d,l}\subset X_{d,l,g_l}$ (where the closure $\bar F_{i,l}$ in $\bar X_{d,l}$ is the image of $C_{i,l}$) and a reduced center
$Z$ which consists of $d$ points, say, $$x_i=(0,\alpha_i,\beta_i)\in F_{i,l},\quad i=1,\ldots,d\,.$$
Let $b_l\in \mathbb{k}[u]$ be the polynomial of the minimal possible degree (so, $\deg b_l\le d-1$) such that $b_l (\alpha_i)+\beta_i=0$, $i=1,\ldots,d$. The three surfaces  in $\A^3=\Spec\mathbb{k}[z,u,t_l]$,
$$X_{d,l,g_l}, \quad \{z=0\}, \quad\mbox{and}\quad \{b_l(u)+t_l=0\}$$ meet transversely at the points $x_i$, $i=1,\ldots,d$. Hence for the ideal $I\subset\cO_{X_{d,l}}(X_{d,l})$ of $Z$ one has $I=(z, b_l(u)+t_l)$. It follows that
$$\cO_{X_{d,l+1}}(X_{d,l+1})=\cO_{X_{d,l}}(X_{d,l})[t_{l+1}]\quad\mbox{where}\quad t_{l+1}=(b_l(u)+t_l)/z\,,$$ see Definition
\ref{def: aff-modif}. 
By the inductive hypothesis one obtains 
$$t_l=g_l(z,u)/z^l=(b_0(u) + b_1(u)z + \ldots + b_{l-1}(u)z^{l-1})/z^{l}\,$$ where $t_l$ fits in (\ref{eq: E1}) with $m=l$.
Therefore, $X_{d,l+1}=X_{d,l+1,g_{l+1}}$ is given in $\A^3=\Spec\mathbb{k}[z,u,t_{l+1}]$ by the equation
$$z^{l+1}t_{l+1}-g_{l+1}(z,u)=0\quad\mbox{where}\quad g_{l+1}(z,u)=b_0(u) + b_1(u)z + \ldots + b_{l}(u)z^{l}\,.$$ This proves the first assertion in  (a).
Repeating the same argument in the reversed order gives the second (alternatively, see \cite[\S 3.1]{DP0}). 

By (a) two GDF surfaces $X_{d,m,g}$ and $X_{d,m,h}$ with the same $d$ and $m$ have the same graph divisor. According to Theorem 
\ref{thm: GDF-cancellation-fixed-graph} 
the cylinders over these surfaces are isomorphic over $B=\Spec\mathbb{k}[z]$. 

For any $m'> m$ the affine modification $$\sigma_{m',m}\colon X_{d,m'}\to X_{d,m},\qquad\sigma_{m',m}=\rho_{m+1}\circ\cdots\circ\rho_{m'}$$ is a principal top level stretching.  Hence by Proposition \ref{prop: isomorphism} one has 
$\cX_{d,m}\cong_B \cX_{d,m'}$. Alternatively, the latter follows by the Danielewski's argument, see Remark \ref{rem:added}.

The number $d$ of components of the divisor $z^*(0)$ in $\cX_{d,m}$ is invariant upon isomorphisms of cylinders over $B$. This yields (b). 

The extended graph $\Gamma_{\rm ext}=\bar F_\infty\cup S\cup\Gamma_{d,m}$ of the pseudominimal completion $(\bar X_{d,m},D_{d,m})$ of $X_{d,m}$ has $v(X_{d,m})=dm+3$ vertices. According to Lemma \ref{lem: vertices}, $v(X_{d,m})$ is an invariant of the surface, as well as the Picard number $\rho(X_{d,m})=d-1$, see \eqref{eq:Picard-number}. 
Hence $X_{d,m}\cong X_{d',m'}$ implies $dm=d'm'$ and $d=d'$, and so, $m=m'$. 

For $d,m\ge 2$ the dual graph of the boundary divisor $\hat D_{d,m}$ is minimal and nonlinear.
It follows that $X_{d,m}$ 
is a surface of class $(\ML_1)$, that is, it 
admits a unique $\A^1$-fibration over an affine base.  Hence any isomorphism $X_{d,m,g}\cong X_{d,m,h}$  fits in a commutative diagram \eqref{diagr:blowups}. 

Next we follow the line of the proof of Lemma 5.12 in \cite{FKZ-uniqueness}.
Assume there is an isomorphism $\varphi\colon X_{d,m,g}\stackrel{\cong}{\longrightarrow}  X_{d,m,h}$. The induced 
birational map $\bar\phi\colon\bar X_{d,m,g}\dashrightarrow \bar X_{d,m,h}$ fits in the commutative diagram
$$\bdi
\bar X_{d,m,g} &\rDashto<{\bar\phi}>{} &  \bar X_{d,m,h}\\
\dTo<{\sigma} &&\dTo>{\sigma'} \\
W &\rDashto<{\bar\psi}>{} & W'\\
\dTo<{} &&\dTo>{} \\
\PP^1 &\rTo<{\cong}>{\eta} & \PP^1\, 
\edi$$
where $\sigma$ is the contraction of the disjoint  union of chains\footnote{ For a subgraph $\Gamma'$ of a graph $\Gamma$ we let $\Gamma\ominus\Gamma'$ denote the graph obtained from $\Gamma$ by deleting $\Gamma'$ along with the edges of $\Gamma$ incident to $\Gamma'$.} $\cB_i\ominus\bar F_i$ on $\bar X_{d,m,g}$, $i=1,\ldots,d$ with a sequence of weights $[[-2,\ldots,-2]]$ of length $m-1$  along with the root $C_0$ to a normal surface singularity of $W$, and $\sigma'$ is a similar contraction on $\bar X_{d,m,h}$. Clearly, $\bar\phi$ is regular in the points of $S\setminus ( C_0\cup \bar F_\infty)$. So, $\bar\psi$ is biregular over $\PP^1\setminus\{\infty\}$ outside the isolated normal singularities of $W$ and $W'$. By the Riemann extension theorem, $\bar\psi$ extends across the singular point of $W$ yielding a biregular isomorphism $\bar\psi\colon W\setminus \sigma(\bar F_\infty)\to W'\setminus \sigma'(\bar F'_\infty)$. It follows that $\bar\phi$ also extends to the minimal resolutions of singularities yielding an isomorphism $\bar X_{d,m,g}\setminus \bar F_\infty\stackrel{\cong}{\longrightarrow} \bar X_{d,m,h}\setminus \bar F'_\infty$. 

Contracting  on the surfaces $\bar X_{d,m,g}\setminus (\bar F_\infty\cup S)$ and $\bar X_{d,m,h}\setminus (\bar F'_\infty\cup S')$ the unions of chains $\bigcup_{i=1}^d \cB_i$ and $\bigcup_{i=1}^d \cB'_i$, respectively, yields an isomorphism 
$$\phi\colon X_{d,0,g}=\Spec\mathbb{k}[z,u]\stackrel{\cong}{\longrightarrow} X_{d,0,h}=\Spec\mathbb{k}[z,u],\,\,\, (z,u) \mapsto (\lambda z, \alpha u+ \beta(z))$$
where $\alpha, \lambda\in \mathbb{k}^*$ and $\beta\in \mathbb{k}[z]$.
It sends the roots of $b_0(u)=g(0,u)$ into the roots of $c_0(u)=h(0,u)$.  Hence
 $$c_0(u)= \alpha^{-d} b_0\circ\phi(0,u) =\alpha^{-d} b_0 (\alpha u +d_0)\,.$$ 
The automorphism $\phi\in\Aut\A^2$ extends to 
an automorphism $\Phi\in\Aut \A^3$, 
\be\label{eq: Sh1} 
\Phi\colon (z,u, t) \mapsto \left(\lambda z, \alpha u+ \beta(z),{\frac{\alpha^d}{\lambda^m}}t+\gamma(z)\right) \,, \ee with a unique polynomial $\gamma\in \mathbb{k}[z]$ 
depending on $d,m, \alpha, \beta, \lambda$
such that  the resulting equation $z^d t-h(z,u)=0$ of the surface $X_{d,m,h}=\Phi(X_{d,m,g})$ has again the required form (\ref{eq: E1}). The uniqueness of $\gamma$ follows from the relation
$$\alpha^d h(z,u)= g(\lambda z,\alpha u+ \beta (z))- \lambda^m\gamma (z)\,.$$ This relation shows as well that $h(z,u)$ is independent of the higher order terms of $\beta (z)$. The rest of the proof of (c) is easy and can be left to the reader.
\eproof

\bsit\la{rem: Wilkens} Proposition \ref{prop: GDWilkens} was established by Wilkens (\cite{Wi}) for $\mathbb{k}=\C$, $d=2$, $m\ge 2$, and $g(z,u)$ in (\ref{eq: E1}) of the form $g(z,u)=h(z)u+u^2$ where $h\in \mathbb{k}[z]$ is a polynomial of degree $\deg h< m$ with $h(0)\neq 0$. Generalizing the examples of Danielewski-Fieseler \cite{Da,Fi} and tom Dieck \cite{tD}, Miyanishi and Masuda  (\cite{MM1}) considered yet another instance of surfaces in $\A^3_\C$ given by (\ref{eq: E1}).
\esit

\bexa[\emph{Masuda-Miyanishi \cite{MM1}}]\la{ex: MM}
Given natural $d,m$ with $d > m\ge 2$ and a homogeneous polynomial 
\be\la{eq: MM} g(z,u)= u^d + a_2 u^{d-2}  z^2+ \ldots + a_d z^d\,\ee 
consider  the 
surface $X(d, m,g)$ in $\A^3$ defined by the equation
$$z^m t - g(z,u) - 1=0\,.$$  Then $\pi=z|_{X(d,m,g)}\colon X(d,m,g)\to B=\Spec\mathbb{k}[z]$ is a Danielewski-Fieseler surface with a unique special fiber $\pi^{-1}(0)$ consisting of $d$ components on level $m$. The examples of tom Dieck (\cite{tD}) correspond to the case $g(z,u)=u^d$. By a suitable triangular automorphism $(z,u,t)\mapsto (z,u,t-b(z,u))$ one can eliminate the last $d-m$ terms in (\ref{eq: MM}) and reduce the equation to  (\ref{eq: E1}). The next result of Miyanishi and Masuda is a particular case of Proposition \ref{prop: GDWilkens}. 
\eexa

\bprop\label{thm: MM} {\rm (\cite[Thm.\ 2.8]{MM1})} With the notation as in {\rm \ref{ex: MM}} the following
hold.
\begin{itemize}\item[{\rm (a)}] 
$\cX(d, m,g)\cong_B
\cX(d, m', h)$ for any $d, m, m'$ and $g,h\in \mathbb{k}[z,u]$ as in \eqref{eq: MM}.
\item[{\rm (b)}]  $X(d, m,g)\cong X(d, m', h)$ if and only if $m = m'$ and $ h(z,u)=g(\lambda z,u)$ for some $\lambda \in \mathbb{k}^*$.
\end{itemize}
\eprop

\end{document}